\numberwithin{equation}{section}
\theoremstyle{plain}
\newtheorem{theorem}{Theorem}[section]
\newtheorem{lemma}[theorem]{Lemma}
\newtheorem{prop}[theorem]{Proposition}
\newtheorem{corollary}[theorem]{Corollary}
\theoremstyle{definition}
\newtheorem{defn}[theorem]{Definition}
\theoremstyle{remark}
\newtheorem*{remark}{Remark}
\theoremstyle{remarks}
\newcommand{\R}{\mathbb{R}}
\DeclareFontFamily{U}{mathx}{\hyphenchar\font45}%
   \DeclareFontShape{U}{mathx}{m}{n}{<->mathx10}{}%
   \DeclareSymbolFont{mathx}{U}{mathx}{m}{n}%
   \DeclareMathAccent{\widebar}{0}{mathx}{"73}%
  \newcommand{\widebar}[1]{\overline{#1}}%
\newcommand{\la}{\lambda}
\newcommand{\e}{\varepsilon}
\newcommand{\Dom}{{\text{\rm Dom}}}
\newcommand{\K}{\mathcal{K}}
\newcommand{\Rn}{{\mathbb R}^n}
\newcommand{\supp}{{\text{\rm supp}}}
   \def\MR#1{}
\begin{document}

\title[Quasimode, eigenfunction and spectral projection bounds]{Quasimode, eigenfunction and spectral projection bounds  for Schr\"odinger operators on manifolds with  critically  singular potentials}

\thanks{M.D.B.~was partially supported by NSF Grant DMS-1565436, Y.S.~was partially supported by the Simons Foundation and
  C.D.S.~was supported in part by the NSF (NSF Grant DMS-1665373)
  and the Simons Foundation. }

\author{Matthew D. Blair}
\address[M.D.B.]{Department of Mathematics and Statistics,
University of New Mexico, Albuquerque, NM 87131,  USA}
\email{blair@math.unm.edu}

\author{Yannick Sire}
 \address[Y.S.]{Department of
  Mathematics, Johns Hopkins University, Baltimore, MD 21218, USA}
\email{sire@math.jhu.edu}

\author{Christopher D. Sogge} \address[C.D.S.]{Department of
  Mathematics, Johns Hopkins University, Baltimore, MD 21218, USA}
\email{sogge@jhu.edu}

\begin{abstract}We obtain quasimode, eigenfunction and spectral projection bounds for Schr\"odinger operators,
$H_V=-\Delta_g+V(x)$, on compact Riemannian manifolds $(M,g)$ of dimension $n\ge2$, which
extend the results of the third author~\cite{sogge88} corresponding to the case where $V\equiv 0$.  We are
able to handle critically singular potentials and consequently assume that $V\in L^{\tfrac{n}2}(M)$
and/or $V\in {\mathcal K}(M)$ (the Kato class).
Our techniques involve combining arguments for proving quasimode/resolvent estimates for  the case where $V\equiv 0$ that go back to the third author \cite{sogge88} as well as ones which arose in the work of Kenig, Ruiz and this author~\cite{KRS} in the study of ``uniform Sobolev estimates''  in $\Rn$.  We also use techniques from more recent developments of several authors concerning variations on the latter theme in the setting of compact manifolds.
Using the spectral projection bounds  we can prove a number of
natural $L^p\to L^p$ spectral multiplier theorems under the assumption that $V\in L^{\frac{n}2}(M)\cap {\mathcal K}(M)$.
Moreover, we can also obtain natural analogs of the original Strichartz estimates~\cite{Strichartz77} for solutions of
$(\partial_t^2-\Delta +V)u=0$.  We also are able to obtain analogous results in $\Rn$ and state some global problems
that seem related to works on absence of embedded eigenvalues for
Schr\"odinger operators in $\Rn$ (e.g., \cite{IonescuJerison}, \cite{JK}, \cite{KenigNar},
\cite{KochTaEV} and \cite{iRodS}.)
\end{abstract}

\maketitle

\section{Introduction and main results}

The purpose of this paper is to obtain quasimode, eigenfunction and spectral projection bounds for Schr\"odinger operators,
\begin{equation}\label{a.1}
H_V=-\Delta_g+V(x),\end{equation}
on compact Riemannian manifolds $(M,g)$ of dimension $n\ge 2$.  We shall deal with real valued potentials
$V(x)$ with critical singularities.   Consequently, we shall assume throughout that $V$ is real valued and
\begin{equation}\label{a.2}
V\in L^{\frac{n}2}(M).
\end{equation}
Note that, in $\Rn$,  multiplication by elements of $L^{\frac{n}2}(\Rn)$ scales as operating by the Euclidean Laplacian does.
For most of our results we shall also have to assume that $V$ belongs to the Kato class, ${\mathcal K}(M)$, which
will be recalled in Definition~\ref{kato} below.

If we merely assume that \eqref{a.2} is valid the operator $H_V$ need not be self-adjoint.  Notwithstanding, in higher dimensions,
we can prove the following.

\begin{theorem}\label{theorem1}  Assume that $n\ge 4$ and $V\in L^{\frac{n}2}(M)$ and let
\begin{equation}\label{a.3}
\sigma(p)=\min\bigl( \, n(\tfrac12-\tfrac1p)-\tfrac12, \, \tfrac{n-1}2(\tfrac12-\tfrac1p)\, \bigr).
\end{equation}
Then for $\la\ge1$ we have
\begin{multline}\label{a.4}
\|u\|_{L^p(M)}\le C_{p,V}\bigl(\, \la^{\sigma(p)-1}\bigl\| \bigr(-\Delta_g+V-(\la+i)^2\bigl)u\bigr\|_{L^2(M)}
+\la^{\sigma(p)}\|u\|_{L^2(M)} \, \bigr),
\\ \text{if } \, \, \, u\in C^\infty(M),
\end{multline}
provided that
\begin{equation}\label{a.5}
2<p<\tfrac{2n}{n-3}.
\end{equation}
The constant $C_{p,V}$ depends on $p$, $V$ and $(M,g)$ but not on $\la$.
\end{theorem}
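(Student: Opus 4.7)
I would deduce the theorem from the Sogge quasimode estimate for the unperturbed Laplacian $-\Delta_g$ by a perturbative bootstrap on two Lebesgue exponents. Setting $f = (H_V - (\la+i)^2)u$, so that $(-\Delta_g - (\la+i)^2)u = f - Vu$, feeding this identity into Sogge's bound at the exponent $p$ yields
\begin{equation*}
\|u\|_{L^p(M)} \le C_p\bigl(\la^{\sigma(p)-1}\|f\|_{L^2} + \la^{\sigma(p)-1}\|Vu\|_{L^2} + \la^{\sigma(p)}\|u\|_{L^2}\bigr),
\end{equation*}
so the entire difficulty is to control $\|Vu\|_{L^2}$. The natural H\"older bound $\|Vu\|_{L^2} \le \|V\|_{L^{n/2}(M)}\|u\|_{L^q(M)}$ with $q = 2n/(n-4)$ for $n \ge 5$ (and $q = \infty$ for $n = 4$) reduces the matter to an a priori bound on $\|u\|_{L^q}$.

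For that I would apply Sogge's bound a second time, this time at the exponent $q$. One computes $\sigma(q) = (n-1)/n$ for $n \ge 5$ (and $\sigma(\infty) = 3/4$ when $n = 4$), so the crucial inequality $\sigma(q) - 1 = -1/n < 0$ holds. Substituting $(-\Delta_g-(\la+i)^2)u = f - Vu$ once more and invoking H\"older produces a closed inequality
\begin{equation*}
\|u\|_{L^q} \le C_q\bigl(\la^{\sigma(q)-1}\|f\|_{L^2} + \la^{\sigma(q)-1}\|V\|_{L^{n/2}}\|u\|_{L^q} + \la^{\sigma(q)}\|u\|_{L^2}\bigr),
\end{equation*}
which can be solved for $\|u\|_{L^q}$ provided $C_q\|V\|_{L^{n/2}}\la^{\sigma(q)-1}$ is, say, below $\tfrac12$; since $\la^{\sigma(q)-1} \le 1$ for $\la \ge 1$, the only remaining issue is that $\|V\|_{L^{n/2}}$ need not be small.

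To circumvent this I would decompose $V = V_1 + V_2$ with $V_1 \in L^\infty(M)$ and $\|V_2\|_{L^{n/2}} < \delta$ for a sufficiently small $\delta$ depending only on $C_q$. The $V_1$ piece contributes only $\|V_1 u\|_{L^2} \le \|V_1\|_\infty \|u\|_{L^2}$, which is absorbed harmlessly into the $\la^{\sigma(p)}\|u\|_{L^2}$ term (and its $L^q$ analogue), while the smallness of $V_2$ allows the geometric series to close uniformly in $\la \ge 1$. The outcome is $\|u\|_{L^q} \le C_V(\la^{\sigma(q)-1}\|f\|_{L^2} + \la^{\sigma(q)}\|u\|_{L^2})$, and feeding this back into the $L^p$ inequality, using $\sigma(p) - 1 + \sigma(q) = \sigma(p) - 1/n \le \sigma(p)$ for $\la \ge 1$, produces exactly the desired bound.

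The main technical obstacle is arranging the absorption step uniformly in $\la \ge 1$: since $V \in L^{n/2}$ carries no smallness, one must commit to the splitting $V = V_1 + V_2$ and then verify that the bounded part $V_1$ only contributes lower-order terms that do not spoil the power of $\la$. The dimensional restriction $n \ge 4$ guarantees the existence of a usable intermediate exponent $q \le \infty$ for which H\"older gives $\|Vu\|_{L^2} \le \|V\|_{L^{n/2}}\|u\|_{L^q}$, while the upper bound $p < 2n/(n-3)$ corresponds to the range in which Sogge's first-branch exponent $n(\tfrac12 - \tfrac1p) - \tfrac12$ is strictly less than $1$; this is the regime in which the two-step perturbative scheme respects the critical $L^{n/2}$ scaling of $V$ and produces the correct power $\la^{\sigma(p)}$ on the right-hand side.
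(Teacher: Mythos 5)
Your overall strategy (apply a free estimate, pull out $Vu$ by H\"older against $\|V\|_{L^{n/2}}$, and absorb) is the right one, but the specific absorption step fails because your second application of Sogge's bound is lossy in $\la$. The H\"older step forces the intermediate exponent $q=\tfrac{2n}{n-4}$, which lies \emph{above} $p_c=\tfrac{2(n+1)}{n-1}$, and for $p\ge p_c$ the correct (and sharp) quasimode exponent is the first branch $n(\tfrac12-\tfrac1q)-\tfrac12=\tfrac32$, not $\tfrac{n-1}{n}$; the paper itself uses $\sigma(p)=\tfrac1{p_c}+n(\tfrac1{p_c}-\tfrac1p)$ for $p>p_c$ in \S\ref{heatsection}, and an estimate of the form $\|u\|_{L^{2n/(n-4)}}\lesssim \la^{-1/n}\|(-\Delta_g-(\la+i)^2)u\|_2+\la^{(n-1)/n}\|u\|_2$ is false (it would give $\|\chi_\la f\|_{L^{2n/(n-4)}}\lesssim\la^{(n-1)/n}\|f\|_2$, contradicting the zonal-harmonic lower bound $\la^{3/2}$ on $S^n$). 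With the true exponent your closed inequality reads $\|u\|_{L^q}\le C_q(\la^{1/2}\|f\|_2+\la^{1/2}\|V_2\|_{L^{n/2}}\|u\|_{L^q}+\la^{3/2}\|u\|_2)$, and the coefficient $C_q\la^{1/2}\|V_2\|_{L^{n/2}}$ cannot be made $<\tfrac12$ uniformly in $\la\ge1$ for any fixed splitting $V=V_1+V_2$; even ignoring that, feeding $\la^{3/2}\|u\|_2$ back produces $\la^{\sigma(p)+1/2}\|u\|_2$, which overshoots the target. For $n=4$ the situation is worse still: $q=\infty$ and the free quasimode bound at $p=\infty$ fails outright in dimension $4$ (see the counterexample at the end of \S\ref{heatsection}).

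What rescues the argument — and what the paper does — is to run the absorption through an estimate with \emph{no} $\la$-loss: the Hadamard parametrix identity $I=T_\la\circ(-\Delta_g-(\la+i)^2)+R_\la$ together with the uniform Sobolev-type bound $\|T_\la f\|_{L^p}\lesssim\|f\|_{L^r}$, $\tfrac1r-\tfrac1p=\tfrac2n$, whose constant is independent of $\la$ (and of the localization scale $\delta$). Then $\|T_\la(Vu)\|_{L^p(B_j)}\lesssim\|V\|_{L^{n/2}(B_j^*)}\|u\|_{L^p(B_j^*)}$ closes at the \emph{same} exponent $p$, and smallness comes not from splitting $V$ but from covering $M$ by $\delta$-balls and using absolute continuity of the $L^{n/2}$ norm to make $\sup_j\|V\|_{L^{n/2}(B_j^*)}$ small. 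This is also where the genuine role of the restriction $p<\tfrac{2n}{n-3}$ lies: it is the upper limit of the Kenig--Ruiz--Sogge range for which the $L^r\to L^p$ bound on $T_\la$ holds, not a condition on $\sigma(p)$ (the numerical coincidence with $\sigma(p)<1$ notwithstanding).
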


If $H_V$ were self-adjoint and positive and if $C^\infty(M)$ were an operator
core for $H_V$ and if we set $P_V=\sqrt{H_V}$, then \eqref{a.4} would yield the
spectral projection bounds
$$\|\chi^V_\la f\|_p\lesssim (1+\la)^{\sigma(p)}\|f\|_2, \quad \la\ge 0,$$
for $p$ as in \eqref{a.5}, where $\chi^V_\la$ is the spectral projection operator for $P_V$ corresponding to the unit interval $[\la,\la+1]$.

In the case where $V\equiv0$ in \cite{sogge88} the third author proved \eqref{a.4} for all $n\ge2$ with
\begin{equation}\label{a.6}
p=p_c=\tfrac{2(n+1)}{n-1}.
\end{equation}
This special case where $p=p_c$ yields the bounds in
\eqref{a.4} for $2<p<p_c$ by H\"older's inequality since the case where $p=2$ is trivial.  Heat kernel techniques
also imply that when $V\equiv 0$ the special case where $p=p_c$ yields the bounds for all $p\ge p_c$ if $n=2$ or $3$
as well as the bounds for $p_c<p<\infty$ when $n=4$ and $p_c<p\le \tfrac{2n}{n-4}$ if $n\ge 5$, see \S \ref{heatsection}.
In order to use these techniques to extend the ``quasimode'' bounds in Theorem~\ref{theorem1} for $H_V$ to such exponents, we shall have to assume
that, in addition to \eqref{a.2}, $V$ belongs to the Kato class that we shall define in a moment.  We shall also be able
to handle $p=\infty$ for $n=4$ and $p\ge \tfrac{2n}{n-4}$ for $n\ge5$ using heat equation techniques if we include an additional
term in the right to account for the unfavorable Sobolev embeddings for such exponents.

Koch, Tataru and Zworski~\cite{KTZ} also obtained semiclassical variants of \eqref{a.4} for all dimensions and all
exponents $2<p\le \infty$ under the assumption that $u$ is spectrally localized.  This assumption is needed since, as
we shall see, when $V\equiv 0$ \eqref{a.4} does not hold for $p=\infty$ if $n=4$ or $p>\tfrac{2n}{n-4}$ if $n\ge 5$.

The proof of quasimode estimates like \eqref{a.4} involves combining the resolvent/oscillatory integral
approach of the third author in \cite{sogge88} with techniques of Kenig, Ruiz and the third author~\cite{KRS} that were
used to prove ``uniform Sobolev inequalities'' in $\Rn$, $n\ge 3$:
\begin{multline}\label{a.7}
\|u\|_{L^s(\Rn)}\le C_{r,s}\|(\Delta+z)u\|_{L^r(\Rn)}, \quad z\in {\mathbb C},
\\
\text{if } \, \, u\in {\mathcal S}(\Rn), \, \, \, n(\tfrac1r-\tfrac1s)=2 \, \, \, \text{and } \, \, \,
s\in (\tfrac{2n}{n-1}, \tfrac{2n}{n-3}).
\end{multline}
As was shown in \cite{KRS}, the condition on the exponents is necessary.  The last condition
 accounts for the limitation in \eqref{a.5}.  On the other hand if,
in addition to \eqref{a.2}, we also assume that $V$ belongs to the Kato class, we can obtain \eqref{a.4} for the larger
(and essentially sharp) range $2<p<\tfrac{2n}{n-4}$ when $n\ge4$.

In addition to borrowing from the techniques of \cite{KRS}, we shall rely on arguments used more recently to prove
variants of \eqref{a.7} for compact manifolds.  Of course \eqref{a.7} cannot hold for all $z\in {\mathbb C}$ since
the right hand may be zero if $z$ is in the spectrum of $-\Delta_g$.  An appropriate variant for compact
manifolds reads as follows
\begin{equation}\label{a.8}
\|u\|_{L^{p'}(M)} \le C\bigl\| (-\Delta_g-(\la+i)^2)u\|_{L^p(M)}, \quad
\text{if } \, \, n(\tfrac1{p'}-\tfrac1p)=2,
\end{equation}
where, as usual $p'$ denotes the conjugate exponent for $p$ (i.e., $\tfrac1p+\tfrac1{p'}=1$).
These estimates were proved by Dos Santos Ferreira, Kenig and Salo~\cite{DKS}.  In the work of Bourgain, Shao and Yao and
the third author \cite{BSSY} it was shown that \eqref{a.8} is sharp in the sense that when $M=S^n$ one cannot
have the variant of the inequality where $(\la+i)^2$ is replaced by $(\la+\e(\la)i)^2$ with $\e(\la)\searrow 0$; however, it was
also shown that certain improvements of this type are possible under certain curvature assumptions.

One of course sees similarities between \eqref{a.4} and \eqref{a.8} since both involve
the parameter $(\la+i)^2$.  In \cite{HuangSogge}, Huang and the third author also showed that when $M=S^n$
the variant of \eqref{a.8} holds involving the exponents in \eqref{a.7}.  The proof of our quasimode estimates
will rely on techniques from \cite{HuangSogge} as well as the earlier works \cite{DKS} and \cite{ShaoYao}, all of which
allow one to show that the ``local operators'' that arise have desirable bounds for $p$ as in \eqref{a.5}.

The proof of Theorem~\ref{theorem1} also shows that the inequality holds when $n=3$ and $2<p<\infty$
if $V\in L^{\frac32}(M)$.  This, however, is not a useful inequality due to the fact that $L^{\frac{n}2}$ is not
contained in $L^2$ when $n=2,3$ and so the right side of \eqref{a.4} may be infinite for typical $u\in C^\infty(M)$.
We shall get around this nuisance by proving that if $V$ also belongs to the Kato class we have the variant of
\eqref{a.4} where $u$ ranges over the domain of $H_V$, $\Dom(H_V)$, and $2<p\le \infty$ for $n=3$.  We shall also
be able to prove quasimode bounds for these exponents when $n=2$.

Before stating these, let us go over the definition of the Kato class, ${\mathcal K}(M)$.  To do this,
let for $r>0$
\begin{equation}\label{a.9}
h_n(r)=
\begin{cases}
|\log r|, \quad \text{if } \, n=2
\\
r^{2-n},\quad \text{if } \, n\ge3.
\end{cases}
\end{equation}

\begin{defn}\label{kato}  The potential
$V$ is said to be in the Kato class
 and written as
$V\in {\mathcal K}(M)$ if
\begin{equation}\label{a.10}\lim_{r\searrow 0} \sup_{x}\int_{B_r(x)}
h_n\bigl(d_g(x,y)\bigr) \, |V(y)| \, dy=0,\end{equation}
where $d_g(\, \cdot \, , \, \cdot \, )$ denotes geodesic distance and $B_r(x)$ is the
geodesic ball of radius $r$ about $x$ and $dy$ denotes the volume element on $(M,g)$.
\end{defn}

Note that since $M$ is compact we automatically have that
$V\in L^1(M)$ if $V\in {\mathcal K}(M)$.
An easy argument also shows that if $V\in L^{\frac{n}2+\e}(M)$, $\e>0$, then $V\in {\mathcal K}(M)$; however,
$L^{\frac{n}2}(M)$ is not contained in ${\mathcal K}(M)$ or vice versa.
Moreover, as we shall review in the next section, if $V\in {\mathcal K}(M)$, then $H_V$
(defined as a sum of quadratic forms) is self-adjoint and bounded from below.
After adding a constant to the potential we may, and always shall assume that $H_V$ is positive when $V\in {\mathcal K}(M)$, cf. \S\ref{katosection}.


For more background on the Kato
class and related spaces we refer the reader to Simon~\cite{SimonSurvey} which deals with Schr\"odinger operators on $\Rn$; however,
most of the results there carry over without difficulty to our setting.

Let us now state our other main result.

\begin{theorem}\label{theorem2}  Assume that $V\in L^{\frac{n}2}(M)\cap {\mathcal K}(M)$.  Then
if $n=2$ or $n=3$,  $\sigma(p)$ as in \eqref{a.3} and $\la\ge 1$ we have
\begin{multline}\label{a.11}
\|u\|_{L^p(M)}\le C_V \la^{\sigma(p)-1}\bigl\| \, \bigl(-\Delta_g+V-(\la+i)^2\bigr)u\, \bigr\|_{L^2(M)},
\\
\text{if } \, \, 2<p\le \infty \quad \text{and } \, \, u\in \Dom(H_V).
\end{multline}
If $n\ge 4$ this inequality holds for all $2<p<\tfrac{2n}{n-4}$, and we also have for such $n$
\begin{multline}\label{a.12}
\|u\|_{L^p(M)}\le C_V \Bigl( \la^{\sigma(p)-1}\bigl\|  \bigl(-\Delta_g+V-(\la+i)^2\bigr)u \bigr\|_{L^2(M)}
\\
+\la^{-N+n/2}\bigl\|(I+H_V)^{N/2}R_\la u\bigr\|_{L^2(M)}\Bigr), \\ \text{if } \, \,
p\in [\tfrac{2n}{n-4}, \infty], \, \,  \text{and } \, \, u\in \Dom(H_V), \quad \la\ge1,
\end{multline}
assuming that $N>n/2$ with $R_\la$ being the projection operator for $P_V=\sqrt{H_V}$ corresponding
to the interval $[2\la,\infty)$.
\end{theorem}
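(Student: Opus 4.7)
The approach combines two ingredients. The first is the self-adjointness of $H_V$ on $\Dom(H_V)$, which holds since $V\in\mathcal{K}(M)$, and which allows us to absorb the $\lambda^{\sigma(p)}\|u\|_{L^2}$ term on the right-hand side of \eqref{a.4}. The second is the Gaussian heat-kernel bounds for $e^{-tH_V}$, which also follow from the Kato class assumption (cf.\ \cite{SimonSurvey}), and which provide spectral-multiplier bounds for $H_V$ beyond what \thmref{theorem1} gives.

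Concretely, since $H_V$ is self-adjoint and $\Im((\lambda+i)^2)=2\lambda$, the spectral theorem yields
\[
\|u\|_{L^2(M)}\le(2\lambda)^{-1}\,\bigl\|(H_V-(\lambda+i)^2)u\bigr\|_{L^2(M)},\qquad u\in\Dom(H_V),\ \lambda\ge 1,
\]
whence $\lambda^{\sigma(p)}\|u\|_{L^2}\le\tfrac12\lambda^{\sigma(p)-1}\|(H_V-(\lambda+i)^2)u\|_{L^2}$, so the two terms on the right-hand side of \eqref{a.4} combine into the single term appearing in \eqref{a.11}. This immediately promotes \thmref{theorem1} to \eqref{a.11} for $n\ge 4$ and $2<p<2n/(n-3)$.

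For the extended range $p\in[2n/(n-3),2n/(n-4))$ when $n\ge 4$ (and all $2<p\le\infty$ when $n=2,3$), I would proceed via a spectral decomposition. The Kato assumption gives $\|e^{-tH_V}\|_{L^r\to L^q}\lesssim t^{-\frac{n}{2}(\frac{1}{r}-\frac{1}{q})}$ for $1\le r\le q\le\infty$, $0<t\le 1$, and the standard bootstrap described in \secref{heatsection} for the case $V\equiv 0$ upgrades the $p=p_c$ case of \thmref{theorem1} to a Sogge-type spectral projection bound $\|\chi^V_m f\|_{L^p}\lesssim m^{\sigma(p)}\|f\|_{L^2}$ throughout the claimed range. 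Decomposing $u=\sum_m \chi^V_m u$ and using the commutation $\chi^V_m(H_V-(\lambda+i)^2)$ together with Cauchy--Schwarz in $m$ gives
\[
\|u\|_{L^p(M)}\le\Bigl(\sum_m m^{2\sigma(p)}\,|m^2-(\lambda+i)^2|^{-2}\Bigr)^{1/2}\bigl\|(H_V-(\lambda+i)^2)u\bigr\|_{L^2(M)},
\]
and the weighted sum is dominated by the diagonal contribution $m\sim\lambda$, evaluating to $O(\lambda^{2\sigma(p)-2})$ and yielding \eqref{a.11}. For $n=2,3$ the proof of \thmref{theorem1} itself must be adapted to $u\in\Dom(H_V)$: the defect $L^{n/2}(M)\not\subset L^2(M)$ is repaired by the domain assumption (which forces $H_V u\in L^2$), and the Kato condition closes the iterated Duhamel/perturbation series in the resolvent/oscillatory-integral arguments of \cite{sogge88,KRS}.

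Finally, for $p\in[2n/(n-4),\infty]$ in $n\ge 4$ the weighted sum above diverges at the high-frequency end, reflecting the failure of the Sobolev embedding $H^{n/2}(M)\hookrightarrow L^\infty(M)$. The fix is to split $u=(I-R_\lambda)u+R_\lambda u$: the low-spectrum part $(I-R_\lambda)u$ is estimated as in the previous paragraph (its spectrum lying in $[0,2\lambda]$ keeps the sum finite), while the high-spectrum part is handled by the Sobolev-type inequality $\|R_\lambda u\|_{L^p}\lesssim\lambda^{n/2-N}\|(I+H_V)^{N/2}R_\lambda u\|_{L^2}$ (itself a consequence of Gaussian heat-kernel bounds and functional calculus), producing the extra term in \eqref{a.12}. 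The main obstacle is establishing the Sogge-type projection bound for $H_V$ in the extended exponent range, and in particular treating $n=2,3$, where the Kato condition is essential both to justify the Gaussian heat-kernel estimates and to close the perturbation series despite the $L^{n/2}$-versus-$L^2$ mismatch.
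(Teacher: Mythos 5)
Your treatment of the case $n\ge 4$ is essentially correct and close to the paper's: the observation that self-adjointness gives $\|(H_V-(\la+i)^2)u\|_2\ge 2\la\|u\|_2$, so that the $\la^{\sigma(p)}\|u\|_2$ term in \eqref{a.4} can be absorbed, is exactly how \eqref{a.11} is obtained for $2<p<\tfrac{2n}{n-3}$ (one also needs, as the paper notes, that $C^\infty(M)$ is an operator core for $H_V$ when $n\ge4$, which uses $L^{n/2}\subset L^2$). For the extended range your per-band decomposition $u=\sum_m\chi^V_mu$ with Cauchy--Schwarz against the weights $m^{2\sigma(p)}|m^2-(\la+i)^2|^{-2}$ is a legitimate reorganization of the paper's argument, which instead splits $u=L_\la u+R_\la u$ and bounds $L_\la u$ by composing the $p=p_c$ quasimode bound with Bernstein-type bounds $\|\tilde\beta_\la(H_V)\|_{L^{p_c}\to L^p}\lesssim\la^{n(1/p_c-1/p)}$ coming from Sturm's Gaussian heat-kernel estimates; your weighted sum does converge precisely for $p<\tfrac{2n}{n-4}$ and evaluates to $O(\la^{2\sigma(p)-2})$, and your high-frequency fix for $p\ge\tfrac{2n}{n-4}$ matches \eqref{a.12}.

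The genuine gap is in $n=2,3$, which you dispose of in one sentence ("the Kato condition closes the iterated Duhamel/perturbation series"). Two concrete problems. First, even for $n=3$ the absorption scheme of Theorem~\ref{theorem1} cannot be run directly on $u\in\Dom(H_V)$: one must pass to a duality argument, testing against $\psi\in C^\infty$ with $\|\psi\|_{p'}=1$ and using $I=(-\Delta_g-(\la+i)^2)\circ T_\la^*+R_\la^*$, and the interchange $(Vu,T_\la^*\psi)=(T_\la(Vu),\psi)$ has to be justified by showing $T_\la(Vu)$ is an absolutely convergent integral --- this is where $\Dom(H_V)\subset L^\infty$ (valid only for $n\le3$) and the Kato condition enter; moreover the endpoint $p=\infty$ is not reachable by the $L^r\to L^p$ uniform Sobolev bound \eqref{h.16} at all, and is instead handled by the pointwise kernel bound $|T_\la(x,y)|\lesssim h_n(d_g(x,y))\mathbf{1}_{d_g(x,y)<\delta}$ together with \eqref{a.10}, which gives $\|T_\la(Vu)\|_\infty\le\tfrac12\|u\|_\infty$ for $\delta$ small. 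Second, and more seriously, for $n=2$ the off-diagonal uniform Sobolev estimates of \cite{KRS} and their manifold analogue \eqref{h.16} simply do not exist, so there is no perturbation series to close at $p=p_c=6$: you cannot absorb $\|T_\la(Vu)\|_6$ into $\tfrac12\|u\|_6$. The paper's resolution reverses the logical order you propose: it first proves the $p=\infty$ bound (via Kato, as above), and only then obtains $p=6$ by the crude estimate $\|T_\la(Vu)\|_6\le C\la^{-1/3}\|V\|_{1}\|u\|_\infty$ combined with the already-established sup-norm bound. Since your heat-kernel bootstrap for large $p$ is itself seeded by the $p=p_c$ case, this ordering issue cannot be sidestepped, and your proposal as written does not produce the two-dimensional result.
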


We could have stated \eqref{a.11} as in \eqref{a.4} with the additional term in the right; however,
since $H_V$ is self-adjoint under the above assumptions, by the spectral theorem, this term is redundant by which we mean that  \eqref{a.11} or the variant including $\la^{\sigma(p)}\|u\|_2$ in the right are equivalent.

If $\chi^V_\la$ is the spectral projection operator associated with $P_V$ corresponding to the unit intervals $[\la,\la+1]$,
then we have the following corollary.

\begin{corollary}\label{spec}
Let $n\ge 2$ and $\sigma(p)$ be as in \eqref{a.3}.  Then if $V\in L^{\frac{n}2}(M)\cap {\mathcal K}(M)$
\begin{equation}\label{a.13}
\|\chi_\la^V f\|_{L^p(M)}\le C_V (1+\la)^{\sigma(p)}\|f\|_{L^p(M)}, \quad p\ge 2, \, \, \la\ge 0.
\end{equation}
Consequently, if
\begin{equation}\label{a.14}(-\Delta_g+V)e_\la = \la^2 e_\la
\end{equation}
in the sense of distributions, we have
\begin{equation}\label{a.15}
\|e_\la\|_{L^p(M)}\le C_V(1+\la)^{\sigma(p)}\|e_\la\|_{L^2(M)}, \quad p\ge 2, \, \, \la\ge 0.
\end{equation}
\end{corollary}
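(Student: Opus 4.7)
The plan is to deduce Corollary \ref{spec} from Theorem \ref{theorem2} by taking $u = \chi_\la^V f$ and converting the resulting norm $\|(H_V - (\la+i)^2)u\|_{L^2}$ into an explicit multiple of $\|f\|_{L^2}$ via the spectral theorem for the self-adjoint operator $H_V$. Throughout, I use that for $V\in L^{\frac n2}(M)\cap\mathcal{K}(M)$ the operator $H_V$ is self-adjoint, positive, and has purely discrete spectrum on the compact manifold $M$, so $\chi_\la^V$ is the orthogonal projection onto the (finite-dimensional) span of eigenfunctions of $P_V=\sqrt{H_V}$ with eigenvalues in $[\la,\la+1]$. In particular $\chi_\la^V f$ lies in $\Dom(H_V^k)$ for every $k$, so it is a legitimate test function for \eqref{a.11} and \eqml{a.12}.

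The main computation is the spectral-multiplier estimate: if $\mu\in[\la,\la+1]$ and $\la\ge 1$, then
\[
|\mu^2-(\la+i)^2|=|\mu-\la-i|\,|\mu+\la+i|\le \sqrt{2}\,\sqrt{(2\la+1)^2+1}\lesssim \la,
\]
so the spectral theorem yields
\[
\bigl\|\bigl(H_V-(\la+i)^2\bigr)\chi_\la^V f\bigr\|_{L^2(M)}\lesssim \la\,\|\chi_\la^V f\|_{L^2(M)}\le \la\,\|f\|_{L^2(M)}.
\]
Inserting this into \eqref{a.11} proves \eqref{a.13} (with $\|f\|_{L^2}$ on the right) for all $p>2$ when $n=2,3$ and for $2<p<\tfrac{2n}{n-4}$ when $n\ge 4$, with constant $C_V\la^{\sigma(p)-1}\cdot\la=C_V\la^{\sigma(p)}$. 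For the remaining exponents $p\in[\tfrac{2n}{n-4},\infty]$ in dimension $n\ge 4$, I apply \eqref{a.12} instead. The essential observation is that $R_\la$ is the projection onto the spectral interval $[2\la,\infty)$ of $P_V$ while $\chi_\la^V$ projects onto $[\la,\la+1]\subset[0,2\la]$ whenever $\la\ge 1$, hence
\[
R_\la\,\chi_\la^V f=0,
\]
killing the second term on the right of \eqref{a.12}; the first term is handled exactly as above.

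It remains to cover the range $0\le\la<1$ and to deduce \eqref{a.15}. For bounded $\la$, the factor $(1+\la)^{\sigma(p)}$ is comparable to $1$, and the spectral subspace $\ran(\chi_\la^V)$ is finite-dimensional; its elements are continuous by the Kato-class regularity theory to be recalled in \S\ref{katosection}, so a trivial finite-dimensional bound gives $\|\chi_\la^V f\|_{L^p(M)}\le C\|f\|_{L^2(M)}$. Finally, if $e_\la$ satisfies \eqref{a.14} distributionally and lies in $L^2(M)$, then $e_\la$ belongs to the eigenspace of $P_V$ with eigenvalue $\la$, so $e_\la=\chi_\la^V e_\la$, and \eqref{a.15} is immediate from \eqref{a.13}. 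The only genuinely delicate step is the large-$p$ case, where one needs precisely the form of Theorem \ref{theorem2} that includes the $R_\la$-correction term and the fact that this correction is annihilated by the spectral localization.
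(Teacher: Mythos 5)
Your argument is correct and is exactly the (implicit) proof the paper intends: the corollary is stated as an immediate consequence of Theorem~\ref{theorem2} via the spectral theorem, and you supply the standard details — the bound $|\mu^2-(\la+i)^2|\lesssim\la$ on $[\la,\la+1]$, the annihilation $R_\la\chi_\la^V=0$ for the large-$p$ range in $n\ge4$, the finite-dimensionality of $\ran(\chi_\la^V)$ for $\la<1$, and the reduction of \eqref{a.15} to \eqref{a.13} via $e_\la=\chi_\la^Ve_\la$. (The $L^2\to L^p$ bound you prove implies the stated $L^p\to L^p$ bound since $M$ is compact and $p\ge2$.)
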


It is well known that for all exponents $p>2$ when $n=2$ and $n=3$ and for relatively small exponents  (including the ``critical'' exponent $p=p_c$)
in dimensions $n\ge4$, bounds
of the form \eqref{a.13} imply quasimode estimates of the form \eqref{a.4} or \eqref{a.11}.  See  \cite[Theorem 1.1]{SoggeZelditchQMNote}.

Based on the work of the third author \cite{sogge87}, \cite{sogge02}, and the third author with Seeger \cite{SeegerSoggePDOfunction}, it is known that in many cases one can use spectral projection bounds to prove multiplier theorems.  In \S\ref{specmult}, we shall show that \eqref{a.13} implies sharp bounds for the Bochner-Riesz operators associated with $H_V$ when $p\in [1,p_c']\cup [p_c,\infty]$ with $p_c$ as in \eqref{a.6}.  Moreover, we observe consequences for spectral multiplier theorems of H\"ormander-Mikhlin type.

We would now like to remark that \eqref{a.15} need not hold for $p=\infty$ if we drop the assumption that $V\in {\mathcal K}(M)$. To see this, we shall take $M$ to be the round sphere $S^n$ with $n\ge 2$.  We then can write
$$S^n = \bigl\{\,  (\omega \sin\phi, \cos \phi): \, \phi\in [0,\pi], \, \, \omega\in S^{n-1}\, \bigr\}.$$
Then if $f$ is a function on $S^n$ only depending on $\phi$ (i.e., distance from the poles $(\pm 1,0,\dots,0)$), one has
$$\Delta_{S^n}f =(\sin \phi)^{-(n-1)} \frac{\partial}{\partial \phi}\, \Bigl( \bigl(\sin\phi\bigr)^{n-1} \,
\frac{\partial f}{\partial \phi}\Bigr).$$

Let us first handle the case where $n\ge 3$.  We let
$$f=-\ln\bigl(\tfrac12 \sin\phi\bigr)$$
so that $f\ge \ln 2>0$ on $S^n$.  Then,
\begin{align*}
-\Delta_{S^n}f&=(\sin\phi)^{-(n-1)}\frac{\partial}{\partial \phi}\bigl(\, (\sin\phi)^{n-2} \cos\phi\, \bigr)
\\
&=\bigl(\sin\phi\bigr)^{-2}\bigl((n-2)\cos^2\phi - \sin^2\phi\bigr).
\end{align*}
Thus, if
$$V=\frac{(n-2)\cos^2\phi-\sin^2\phi}{\sin^2\phi \, \ln(\tfrac12 \sin\phi)},$$
we have that $V\approx \frac{-(n-2)}{\sin^2\phi |\ln \sin\phi|}\ll 0$ near $\phi=0$ and $\phi=\pi$,
and
$$H_V f =0\cdot f,$$
so that $f=e_0$ is an unbounded eigenfunction with eigenvalue $0$, which means that \eqref{a.15} cannot
hold in this case when $\la=0$.

Note that, as $n\ge3$,
$$V\in L^{\frac{n}2}(S^n),$$
but
$$V\notin {\mathcal K}(S^n) \cup L^{\frac{n}2+\delta}(S^n), \quad \text{if } \, \, \delta>0.$$
 Thus, we conclude that merely assuming $V\in L^{\frac{n}2}(M)$ is not sufficient to get $O((1+|\la|)^{\frac{n-1}2})$ sup-norm estimates for eigenfunctions of Schr\"odinger operators on compact manifolds when $n\ge 3$.

To handle the case where $n=2$ one needs to modify this argument.  Here, we take
$f=[\ln(\tfrac12 \sin\phi)]^2$.  Then
\begin{align*}
-\Delta_{S^2}f&=-2(\sin \phi)^{-1}\, \frac{\partial}{\partial \phi}\bigl( \cos \phi \cdot \ln(\tfrac12 \sin\phi)\bigr)
\\
&=2\ln(\tfrac12\sin\phi)-2(\sin\phi)^{-2}\cos^2\phi,
\end{align*}
and so
if
$$V=2\frac{\cos^2\phi-\sin^2\phi \cdot \ln(\tfrac12\sin \phi)}{\sin^2\phi \cdot (\ln\tfrac12 \sin\phi)^2},$$
then
$$H_Vf=0\cdot f.$$
Like before,
$$V\notin {\mathcal K}(S^2) \cup L^{1+\delta}(S^2), \, \, \text{if } \, \delta>0,
 \quad \text{but } \, V\in L^1(S^2).$$
 Since $f=e_0$ is an unbounded eigenfunction with eigenvalue $0$, we conclude that
 \eqref{a.15} also breaks down on $S^2$ if we do not assume that $V\in {\mathcal K}(S^2)$.

Comparing Theorems \ref{theorem1} and \ref{theorem2} shows that the assumption $V\in {\mathcal K}(M)$ only
enters when proving quasimode estimates for large exponents $p$.  Indeed, by \eqref{a.4}
we have \eqref{a.15} for all $2<p<\tfrac{2n}{n-3}$ if $n\ge 4$. The example we have just given
 only shows that the
bounds need not hold for $p=\infty$.  It would be interesting\footnote{Simon~\cite[\S A.3]{SimonSurvey} raises an analogous problem for $L^p$ bounds for eigenfunctions of Schr\"odinger operators in $\Rn$ but says that the ``class of potentials .... includes none of physical interest".  This is due to the fact that the associated operators $H_V$ need not be  essentially self-adjoint if one weakens the  hypotheses
in Corollary~\ref{spec}.} to determine if we might have \eqref{a.15} for a larger
range than $2<p<\tfrac{2n}{n-3}$ when $n\ge4$.  Results of Brezis and Kato~\cite[Theorem 2.3]{BeKa} for $\Rn$ suggest
that, like for the above counterexample, only the case where $p=\infty$ may violate \eqref{a.15}.

The paper is organized as follows.  In the next section we shall go over background concerning the Kato
class and also review the facts about the Hadamard parametrix and the oscillatory integral bounds that we shall use
in proving our quasimode estimates.  Then in \S3 we shall prove Theorem~\ref{theorem1}.  In \S \ref{3dsection}
and \ref{2dsection} we shall prove
the bounds in Theorem~\ref{theorem2} for $n=3$ and $n=2$.  Different arguments
are needed for these two cases due to the nature of Sobolev embeddings and
the fact that the uniform Sobolev estimates in \cite{KRS} (and manifold variants)  do not hold when $n=2$.  In \S5 we shall prove the remaining part
of this theorem corresponding to $n\ge4$.
In \S \ref{specmult}--\ref{strichartzsection} we shall go over applications, showing that we can use the spectral
projection estimates to prove natural multiplier theorems, and, moreover, Strichartz estimates for wave
operators involving potentials $V\in L^{n/2}(M)\cap {\mathcal K}(M)$.
In the final section we shall show how our results extend to Schr\"odinger operators $H_V$ in $\Rn$
and go over some natural global problems, such
obtaining
improved spectral projection estimates and the related problem of
proving global Strichartz estimates, that remain open and seem to be related to work on
proving that embedded eigenvalues do not exist (e.g., \cite{IonescuJerison}, \cite{JK}, \cite{KenigNar},
\cite{KochTaEV} and \cite{iRodS}).

\section{Some background}\label{background}

In this section we shall collect the main facts that we shall require.  We shall review how the assumption
$V\in {\mathcal K}(M)$ implies that the symmetric operators $H_V$ in \eqref{a.1} are self-adjoint and bounded
from below.  We shall also review facts about the Hadamard parametrix and bounds for  the oscillatory integral operators
that will arise in our proofs.

Let us start out with the former.

\subsection{The Kato class and self-adjointness}\label{katosection}

As we stated before, for brevity, here and throughout, $dx$ shall denote the Riemannian measure on $(M,g)$.

\begin{prop}\label{self-adjoint}
If $V\in {\mathcal K}(M)$ the quadratic form,
$$q_V(u,v)=-\int_M Vu \, \overline{v}\, dx +\int -\Delta_g u \, \overline{v}\, dx, \quad
u,v \in \Dom(\sqrt{-\Delta_g+1}),$$
is bounded from below and defines a unique semi-bounded self-adjoint operator $H_V$ on $L^2$.  Moreover, $C^\infty(M)$ constitutes a form core\footnote{Recall that a \emph{form core} for $q_V$ is a subspace $S$ which approximates elements $u$ in the domain of the form in that there exists a sequence $u_m \in S$ satisfying $\lim_m \|u-u_m\|^2 + q_V(u-u_m,u-u_m) = 0$.} for $q_V$.
\end{prop}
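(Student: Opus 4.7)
The plan is to realize $H_V$ via the KLMN theorem by showing that the perturbation given by $-V$ is infinitesimally form bounded relative to the Dirichlet form $q_0(u,v)=\int_M -\Delta_g u\,\bar v\,dx$ of $-\Delta_g$, and then to read off the form-core statement from the form-boundedness.

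First I would replace the Riesz-potential condition of Definition~\ref{kato} by the equivalent heat-semigroup characterization
\begin{equation*}
\lim_{t\searrow 0}\,\sup_{x\in M}\int_0^t \bigl(e^{s\Delta_g}|V|\bigr)(x)\,ds=0.
\end{equation*}
The equivalence rests on the classical small-time Gaussian upper bound $p_s(x,y)\lesssim s^{-n/2}\exp(-c\,d_g(x,y)^2/s)$ for the heat kernel on the compact manifold $(M,g)$: integrating this bound in $s$ over $(0,t]$ yields a weight that, up to a uniformly bounded remainder, is comparable to the function $h_n(d_g(x,y))$ used in \eqref{a.10}. This is precisely the Aizenman--Simon argument transplanted from $\Rn$ to $M$; because $M$ is compact, the heat-kernel bounds needed are standard and available for $s$ bounded away from $0$ as well.

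Next, from this characterization I would derive the infinitesimal form bound: for every $\varepsilon>0$ there exists $C_\varepsilon<\infty$ such that
\begin{equation*}
\int_M |V|\,|u|^2\,dx \;\le\; \varepsilon\,\bigl\langle -\Delta_g u,\,u\bigr\rangle \;+\; C_\varepsilon\,\|u\|_{L^2(M)}^2,\qquad u\in \Dom(\sqrt{-\Delta_g+1}).
\end{equation*}
The standard route is to write $(-\Delta_g+\mu)^{-1}=\int_0^\infty e^{-\mu t}\,e^{t\Delta_g}\,dt$, split the $t$-integral at a small time $\tau$, and use the heat-semigroup condition above to make the $L^\infty\to L^\infty$ norm of the small-$t$ part of $|V|^{1/2}(-\Delta_g+\mu)^{-1}|V|^{1/2}$ arbitrarily small, while the large-$t$ piece is controlled using that $e^{t\Delta_g}$ is a contraction on $L^\infty$ and $V\in L^1(M)$. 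A symmetrization/Schur-test argument \emph{\`a la} Stollmann--Voigt then converts this into the displayed infinitesimal form bound.

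With the infinitesimal form bound in hand, the KLMN theorem immediately produces a unique semi-bounded self-adjoint operator $H_V$ whose quadratic form is $q_V$ with form domain $\Dom(\sqrt{-\Delta_g+1})$; after adding a constant to $V$ we may assume $H_V\ge 0$, as in \secref{katosection}. The form-core statement follows because $C^\infty(M)$ is dense in $\Dom(\sqrt{-\Delta_g+1})$ in the $q_0$-graph norm (by elliptic regularity for $-\Delta_g$ on the compact manifold), and the form bound makes the $q_V$- and $q_0$-graph norms equivalent on this subspace. The principal obstacle is really the passage from the pointwise definition \eqref{a.10} to the infinitesimal form bound; once the equivalent heat-semigroup formulation of $\mathcal{K}(M)$ is established, the remainder is soft functional-analytic bookkeeping.
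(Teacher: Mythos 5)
Your proposal is correct and follows essentially the same route as the paper: both reduce the statement to the infinitesimal form bound $\int|V||u|^2\le \e\,q_0(u,u)+C_\e\|u\|_2^2$ via the Laplace-transform representation of the resolvent, the small-time Gaussian heat-kernel bounds, and the Kato condition \eqref{a.10}, and then invoke the KLMN theorem. The only cosmetic differences are that the paper achieves the smallness by sending the spectral shift $N\to\infty$ rather than splitting the time integral at a small $\tau$, and passes from the $L^\infty\to L^\infty$ and $L^1\to L^1$ bounds to the $L^2$ bound on $|V|^{1/2}(H_0+N)^{-1}|V|^{1/2}$ by Stein interpolation and a $TT^*$ argument rather than a Schur test.
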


\begin{proof}
Since $(-\Delta_g+1)^{1/2}$ is self-adjoint, by perturbation theory (specifically the KLMN Theorem
(see \cite[Theorem X.17]{ReedSimon}) it suffices to prove that for any $0<\e<1$ there is a constant
$C_\e<\infty$ so that
\begin{equation}\label{s.1}
\int |V| \, |u|^2 \, dx \le \e^2 \, \bigl\|(-\Delta_g+1)^{1/2}u\bigr\|_2^2 +C_\e\|u\|_2^2, \quad
u\in \Dom(\sqrt{H_0}),
\end{equation}
where $H_0=-\Delta_g+1$.

To prove this, following the argument, for instance, in \cite[Proposition A.2.3]{SimonSurvey} for $\Rn$,  we shall use the fact that the heat kernel $p_t(x,y)=\bigl(e^{-tH_0}\bigr)(x,y)$ for $H_0$ satisfies
\begin{equation}\label{s.2}
0\le p_t(x,y)\le
\begin{cases}
C_0t^{-n/2} \, \exp(-c_0(d_g(x,y))^2/t), \quad 0\le t\le 1
\\
\exp(C_0t), \quad t\ge 1,
\end{cases}
\end{equation}
where $c_0>0$ and $C_0<\infty$ are uniform constants.  These are a consequence of the Li-Yau estimates in \cite{LiYau}.  Using this and the
definition
\eqref{a.10} of ${\mathcal K}(M)$, we see\footnote{To see this we note that by \eqref{s.2}, if $N$ is large enough the $t$-integral is dominated by
$h_n(d_g(x,y))$.  Thus, as $V\in {\mathcal K}(M)$, we just need to see that if the $y$-integral is taken over the region where $\{y\in M: \, d_g(x,y)>\delta\}$, with $\delta>0$ fixed, then the resulting expression is small.  Since this also follows easily from \eqref{s.2} our claim follows.}  that if $V\in {\mathcal K}(M)$
$$\sup_{x\in M} \int_0^\infty \int_M e^{-Nt}\,
p_t(x,y) \, |V(y)| \, dy dt \to 0, \quad \text{as } \, \, N\to \infty.$$
Choose $N=N_\e$ so that the left side is $<\e^2$, i.e.,
$$\bigl\| \,  (H_0+N_\e)^{-1}|V|\, \bigr\|_\infty<\e^2.$$

This means that the operator $u\to (H_0+N_\e)^{-1}(|V|u)$ satisfies
$$\bigl\|(H_0+N_\e)^{-1}|V| \bigr\|_{L^\infty\to L^\infty}<\e^2.$$
By duality, we also have
$$\bigr\||V|(H_0+N_\e)^{-1}\bigr\|_{L^1\to L^1}<\e^2.$$
An application of Stein's interpolation theorem therefore yields
\begin{equation*}
\bigl\||V|^{1/2}(H_0+N_\e)^{-1}|V|^{1/2}\bigr\|_{L^2\to L^2}<\e^2,
\end{equation*}
which, by a $TT^*$ argument, is equivalent to
\begin{equation}\label{semibounded}
\bigl\| |V|^{1/2}(H_0+N_\e)^{-1/2}\bigr\|_{L^2\to L^2}<\e.
\end{equation}
Since this implies \eqref{s.1} with $C_\e=N_\e$, we are done.
\end{proof}

If $u\in \Dom(\sqrt{-\Delta_g+1})$ then $-\Delta_gu$ and $Vu$ are both distributions.  If $H_V$ is the self-adjoint
operator given by the proposition, then $\Dom(H_V)$ is all such $u$ for which $-\Delta_gu+Vu\in L^2$.  At times,
such as in the statement of Theorem~\ref{theorem2} we abuse notation a bit by writing $H_V$ as $-\Delta_g+V$.

Note that \eqref{semibounded} implies that $q_V$ is bounded from below.  If we take $\e^2=1/2$ in \eqref{s.1}
we indeed get for large enough $N$
\begin{multline}\label{semi}
\|\sqrt{-\Delta_g+1}u\|_2^2 =\int (-\Delta_g+1)u \, \overline{u} \, dy
\le 2\int(-\Delta_g+V+N)u\, \overline{u}\, dy
\\
=2\bigl\|\sqrt{H_V+N}u\bigr\|_2^2, \quad \text{if } H_V=-\Delta_g+V.
\end{multline}
Thus, $(-\Delta_g+1)^{1/2}(H_V+N)^{-1/2}$ and $(H_V+N)^{-1/2}(-\Delta_g+1)^{1/2}$ are bounded on $L^2$.
Since $(-\Delta_g+1)^{-1/2}$ is a compact operator on $L^2$, so must be $(H_V+N)^{-1/2}$.
From this we conclude that the self-adjoint operator $H_V$ has {\em discrete spectrum}.  By heat kernel methods one can also show that
the eigenfunctions $e_\la$ of $(H_V+N)$ are continuous.  (See, e.g., \cite[Theorem 2.21]{Guneysu} and \cite{Sturm}).

After replacing $V$ by $V+N$ to simplify the notation, we may assume, as we shall in what follows, that
\eqref{semi} holds with $N=0$.   This just shifts the spectrum and does not change the eigenfunctions.  In this case the spectrum of $H_V$ is positive and its eigenfunctions therefore
are distributional solutions of
$$H_V e_\la =\la^2 e_\la, \quad \text{some }\, \la>0,$$
which means here that $\la$ is the eigenvalue of the ``first order'' operator $\sqrt{H_V}$, i.e.,
\begin{equation}\label{ef}
P_V e_\la =\la e_\la, \quad \text{if } \, \, P_V=\sqrt{H_V}.
\end{equation}

\subsection{The Hadamard parametrix and oscillatory integral bounds}\label{hadamardsection}

As in many early works (e.g., \cite{DKS}, \cite{HuangSogge} and \cite{sogge88}), we shall prove
our estimates using the Hadamard parametrix.  Let us quickly review the facts that we shall require.
More details can be found in these works as well as in \cite[\S 17.4]{HIII} and \cite[\S 2.4]{SoggeHangzhou}.

Recall that we are abusing the notation a bit by letting $dx$ denote the volume element associated with the metric $g$ on $M$ and all integrals are to be
taken with this measure.  In local coordinates it is of the form $|g|^{1/2}$ times Lebesgue measure,
where $|g|=\det(g_{jk}(x))$.
Here $g_{jk}(x)dx^jdx^k$ is the metric.  In local coordinates the Laplace-Beltrami operator $\Delta_g$ takes the form
$$|g|^{-\frac12}\sum_{j,k=1}^n \frac\partial{\partial x_j}\bigl( |g|^{\frac12} g^{jk}(x)\bigr)\frac\partial{\partial x_k},$$
and so $\Delta_g$ is self-adjoint with respect to the volume element.  Also for $x$ sufficiently close to $y$ we shall let
$d_g(x,y)$ denote the geodesic distance between $x$ and $y$.

The Hadamard parametrix for
$-\Delta_g-(\la+i)^2$ is an approximate
``local inverse'' that is built using the radial functions
\begin{equation}\label{h.1}
F_\nu(|x|,\la)=\nu !  \, (2\pi)^{-n}\int_{\Rn}\frac{e^{ix\cdot \xi}}{\bigl(|\xi|^2-(\la+i)^2\bigr)^{\nu+1}} \, d\xi,
\quad \nu=0,1,2,3,\dots.
\end{equation}
Here $|x|$ denotes the Euclidean length of $x\in \Rn$.  If $\Delta=\Delta_{\Rn}$ denotes the Euclidean Laplacian, then
$$\bigl(-\Delta-(\la+i)^2\bigr) F_0(|x|,\la)=\delta_0,$$
whilst
$$\bigl(-\Delta-(\la+i)^2\bigr) F_\nu =\nu F_{\nu-1}, \quad \text{if } \, \, \, \nu=1,2,3,\dots.$$
Here and throughout we are always assuming that $\la\ge1$.

Using these equations one can find coefficients $\alpha_\nu \in C^\infty$ defined near the diagonal so that for $N\in
{\mathbb N}$ we have for $x$ near $y$
\begin{equation}\label{h.2}
\bigl(-\Delta_g-(\la+i)^2\bigr) F =\bigl(\det g_{jk}(x)\bigr)^{\frac12} \, \delta_y(x)-(\Delta_g \alpha_N)F_N,
\end{equation}
if
\begin{equation}\label{h.3}
F(x,y,\la)=\sum_{\nu=0}^N \alpha_\nu(x,y)F_\nu(d_g(x,y),\la)
\end{equation}
and of course $F_N=F_N(d_g(x,y),\la)$ in \eqref{h.2}.
By choosing $N$ large enough (depending on the dimension) we can ensure that the last term
is bounded, i.e.,
\begin{equation}\label{h.4}
|(\Delta_g\alpha_N)F_N|\le C_0 \quad \text{if } \, \, \, \la\ge 1.
\end{equation}
The identity \eqref{h.2} is just (17.4.6)$'$ in \cite{HIII}.

We shall also need more information about the functions $F_\nu$ in \eqref{h.1}.  Specifically, we recall that we can
rewrite them as
\begin{equation}\label{h.5}
F_\nu(r,\la)=c_\nu r^{-\frac{n}2+\nu+1} \, z^{\frac{n}4-\frac{\nu+1}2} \,
K_{\frac{n}2-\nu-1}(\sqrt{z}r), \quad
z=-(\la+i)^2,
\end{equation}
where $K_m$ are the modified Bessel functions of the second kind defined by
$$K_m(z)=\int_0^\infty e^{-z\cosh t}\cosh(mt) \, dt, \quad \text{Re }z>0.$$
As is well known (see \cite{formulas})
\begin{multline}\label{h.6}
|K_m(z)|\le C_m |z|^{-m} \quad \text{if } \, \, m>0, \, \, \quad \text{while } \, \,
|K_0(z)| \le C |\log(|z|/2)|,
\\
\quad \text{when } \, \, |z|\le 1 \quad \text{and } \, \, \text{Re }z>0,
\end{multline}
and also
\begin{equation}\label{h.7}
K_m(z)=a_m(z) \, z^{-\frac12} e^{-z} \quad \text{when } \, \, |z|\ge 1 \quad \text{and } \, \, \text{Re }z>0,
\end{equation}
where the $a_m$ behave like symbols, i.e.,
\begin{equation}\label{h.8}
\Bigl| \frac{d^j}{dr^j} a_m\bigl(r \tfrac{z}{|z|}\bigr)\Bigr| \le C_{j,m} r^{-j}, \quad
j=0, 1, 2, \dots \quad \text{if } \, \, \, \text{Re }z>0 \quad \text{and } \, \, r\ge 1.
\end{equation}
More details can be found in \cite[p. 338--339]{KRS}, \cite[Lemma 4.3]{sogge88} and \cite{DKS}.

From \eqref{h.5}---\eqref{h.8} we deduce the following result which is essentially Lemma 4.3 in \cite{sogge88}.

\begin{lemma}\label{lemmah1}  There is an absolute constant $C$ so that for $\la\ge1$
$$|F|\le C(d_g(x,y))^{2-n} \quad \text{if } \, d_g(x,y)\le \la^{-1} \quad \text{and } \, \, n\ge 3,$$
and
$$|F|\le C|\log (\la d_g(x,y)/2)| \quad  \text{if } \, d_g(x,y)\le \la^{-1} \quad \text{and } \, \, n=2.$$
Furthermore, for $d_g(x,y)$ smaller than a fixed constant (depending on $(M,g)$)
$$F=\la^{\frac{n-3}2} e^{-i\la d_g(x,y)}\, \bigl(d_g(x,y)\bigr)^{-\frac{n-1}2} \, a_\la(x,y),
\quad \text{if } \, \, d_g(x,y)\ge \la^{-1},
$$
where
$$|\nabla_{x,y}^\alpha a_\la(x,y)|\le C_\alpha \bigl(d_g(x,y)\bigr)^{-|\alpha|}.
$$
\end{lemma}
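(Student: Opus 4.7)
The strategy is to analyze each $F_\nu$ in the expansion \eqref{h.3} via the closed-form expression \eqref{h.5} together with the Bessel-function bounds \eqref{h.6}--\eqref{h.8}, distinguishing the short-range regime $r:=d_g(x,y)\le\la^{-1}$ from the long-range regime $r\ge\la^{-1}$; these correspond precisely to $|\sqrt{z}\,r|\lesssim 1$ and $|\sqrt{z}\,r|\gtrsim 1$. Since $z=-(\la+i)^2$, a short computation shows that the branch with $\Re\sqrt{z}>0$ is $\sqrt{z}=1-i\la$, so that $\Re\sqrt{z}$ is uniformly bounded while $|\sqrt{z}|\sim\la$ for $\la\ge 1$, and $e^{-\sqrt{z}r}=e^{-r}e^{i\la r}$ supplies the $\la$-oscillation in the long-range regime.

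In the short-range regime, \eqref{h.6} applies to $K_{\frac{n}{2}-\nu-1}(\sqrt{z}\,r)$. Substituting into \eqref{h.5} and using $|z^{\frac{n}{4}-\frac{\nu+1}{2}}|\sim\la^{\frac{n}{2}-\nu-1}$, the factors of $\la$ cancel exactly and the contribution of $F_\nu$ is bounded by $C r^{2-n+2\nu}$ whenever $n/2-\nu-1>0$. For $n\ge 3$ the $\nu=0$ term carries the dominant singularity $r^{2-n}$ and the $\nu\ge 1$ terms are less singular on this scale; summing and multiplying by the smooth Hadamard coefficients $\alpha_\nu(x,y)$ yields the first bound. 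For $n=2$ the Bessel index at $\nu=0$ is zero, so the logarithmic bound in \eqref{h.6} gives $|F_0|\lesssim|\log(\la r/2)|$, while the $\nu\ge 1$ terms remain bounded on this range, yielding the second bound.

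In the long-range regime I would insert \eqref{h.7} into \eqref{h.5}, write $e^{-\sqrt{z}r}=e^{-r}e^{i\la r}$, and collect the resulting powers of $r$ and $\la$ to obtain
\[
F_\nu(r,\la)=c_\nu\,\la^{\frac{n-3}{2}-\nu}\,r^{-\frac{n-1}{2}+\nu}\,e^{\pm i\la r}\,b_{\nu,\la}(x,y),
\]
where $b_{\nu,\la}(x,y)=a_{\frac{n}{2}-\nu-1}(\sqrt{z}\,d_g(x,y))\,e^{-d_g(x,y)}$. Using \eqref{h.8} and the chain rule through $d_g$, each $x$- or $y$-derivative landing on $a_m$ produces a factor $\sqrt{z}\,\nabla d_g$ that is exactly compensated by the factor $(\sqrt{z}\,r)^{-1}$ from \eqref{h.8}, for a net cost of $r^{-1}$; since $e^{-r}$ is smooth and uniformly bounded together with all its derivatives on the fixed neighborhood of the diagonal where the parametrix is constructed, it absorbs into the amplitude without affecting the symbol estimate. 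This gives $|\nabla^\alpha_{x,y}b_{\nu,\la}|\le C_\alpha r^{-|\alpha|}$. Multiplying by $\alpha_\nu(x,y)$ and observing that the $\nu\ge 1$ terms carry extra factors of $\la^{-\nu}$ relative to $\nu=0$, the entire sum can be cast in the form $\la^{\frac{n-3}{2}}e^{\pm i\la d_g(x,y)}\,d_g(x,y)^{-\frac{n-1}{2}}\,a_\la(x,y)$ with $a_\la$ satisfying the stated bounds.

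The principal obstacle is the bookkeeping in this last step: verifying that each differentiation really costs at most one factor of $r^{-1}$, which requires using \eqref{h.8} symmetrically on the $x$- and $y$-variables and tracking the fact that $\alpha_\nu$, while smooth, depends on both arguments; and confirming that the polynomially-in-$\la$ corrections produced by the $\nu\ge 1$ summands can be absorbed into a single amplitude $a_\la$ uniformly on the range $\la\ge 1$. The short-range part is more elementary and reduces to careful accounting of powers of $r$ and $\la$ from \eqref{h.6}.
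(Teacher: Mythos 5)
Your proposal is correct and follows essentially the same route the paper takes (the paper simply deduces the lemma from \eqref{h.5}--\eqref{h.8}, citing Lemma 4.3 of \cite{sogge88}): small-argument Bessel bounds give the exact cancellation of powers of $\la$ and the $r^{2-n+2\nu}$ (resp.\ logarithmic) behavior when $d_g(x,y)\le\la^{-1}$, and the large-argument expansion $K_m(w)=a_m(w)w^{-1/2}e^{-w}$ with $\sqrt{z}=1-i\la$ produces the oscillatory form with the symbol estimates, each derivative costing $r^{-1}$ via \eqref{h.8}. Your computation $e^{-\sqrt{z}r}=e^{-r}e^{+i\la r}$ is right, and the sign of the phase (versus the $e^{-i\la d_g}$ in the statement) is immaterial for every subsequent use of the lemma.
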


As we pointed out before, \eqref{h.2} is only valid near the diagonal, as  is the representation of $F$ as in the
last part of this lemma.  Due to this, as well as to be able to exploit our assumptions regarding the potentials, let
us introduce cutoffs.  Specifically, fix $\eta \in C^\infty_0([0,\infty))$ which equals one for $s\le 1/2$ and zero
for $s\ge 1$ and set
$$\eta_\delta(x,y)=\eta\bigl(d_g(x,y)/\delta\bigr).$$
Of course this cutoff then satisfies the bounds for $a_\la$ above.

Next, if $\delta>0$ is sufficiently small, by \eqref{h.2}, we have
\begin{multline}\label{h.2'}\tag{2.7$'$}
\bigl(-\Delta_g-(\la+i)^2\bigr) \Bigl(\eta_\delta(x,y)F(\, \cdot \, ,y,\la)\Bigr)
=\bigl(\det g_{jk}(x)\bigr)^{\frac12} \, \delta_y(x)
\\
-\eta_\delta(x,y) \, \Delta_g\alpha_N \cdot F_N(\, \cdot \, ,y,\la)
+[\eta_\delta(\, \cdot \, ,y), \Delta_g] \, F(\, \cdot \, ,y,\la).
\end{multline}
We think of the last two terms as ``remainder terms''.  As we pointed out before, by \eqref{h.4}, the second
to last term is bounded, as we shall assume, if $N$ is large enough, while the last term is supported in the set
where $d_g(x,y)\in [\delta/2,\delta]$.  In practice we shall need to take $\delta>0$ to be small depending on
the potential $V$.

Using \eqref{h.2'} and taking adjoints we find that Lemma~\ref{lemmah1} yields the following:

\begin{prop}\label{proph1}
If $\delta>0$ is small we can write for $\la\ge1$
\begin{equation}\label{mainformula}I=T_\la \circ \bigl(-\Delta_g-(\la+i)^2\bigr) +R_\la,
\end{equation}
where $T_\la$ and $R_\la$ are integral operators with kernels $T_\la(x,y)$ and $R_\la(x,y)$, respectively, satisfying
\begin{equation}\label{h.9}
T_\la(x,y)=R_\la(x,y)=0 \quad \text{if } \, \, \, d_g(x,y)>\delta.
\end{equation}
Furthermore,
\begin{multline}\label{h.10}
T_\la(x,y)=\la^{\frac{n-3}2} e^{-i\la d_g(x,y)}
\, \bigl(d_g(x,y)\bigr)^{-\frac{n-1}2} \, a_\la(x,y),
\\ \text{if } \, \, \, d_g(x,y)\ge \la^{-1}, \quad \text{where } \, \, \, \nabla_{x,y}^\alpha a_\la(x,y)=O_\alpha\bigl((d_g(x,y))^{-|\alpha|})\bigr),
\end{multline}
and
\begin{multline}\label{h.11}
|T_\la(x,y)|\le C(d_g(x,y))^{2-n} \, \, \, \text{for } \, n\ge 3
\\ \text{and } \, \, |T_\la(x,y)|\le C|\log(\la d_g(x,y)/2)| \, \, \, \text{for } \, n=2, \quad
\text{if } \, \, d_g(x,y)\le \la^{-1}.
\end{multline}
Also, $R_\la(x,y)=r_\la(x,y)+b_\la(x,y)$ where $b_\la(x,y)$ is bounded independent of $\la\ge 1$ and
\begin{equation}\label{h.12}
r_\la(x,y)=\la^{\frac{n-1}2}e^{-i\la d_g(x,y)} c_\la(x,y),
\end{equation}
with
\begin{equation}\label{h.13}
|\nabla^\alpha_{x,y}c_\la|\le C_{\delta,\alpha}
\quad \text{and } \, \, c_\alpha(x,y)=0 \quad
\text{when } \, \, \, d_g(x,y)\notin [\delta/2,\delta].
\end{equation}
\end{prop}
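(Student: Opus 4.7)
The plan is to derive the operator identity \eqref{mainformula} directly from the Hadamard parametrix formula \eqref{h.2'} by transposition. I would let $T_\la$ be the integral operator with Schwartz kernel (up to the usual volume-element normalization) given by $\eta_\delta(x,y) F(x,y,\la)$, where $F$ is built as in \eqref{h.3}. The identity \eqref{h.2'} then expresses, for each fixed $y$, the distributional equation $(-\Delta_g-(\la+i)^2)_x [\eta_\delta(x,y) F(x,y,\la)] = |g(x)|^{1/2}\delta_y(x) - E(x,y)$, where $E$ denotes the two error terms on the right-hand side of \eqref{h.2'}. Since $-\Delta_g-(\la+i)^2$ is symmetric with respect to the unconjugated bilinear pairing $\int fg\, dx$ (because $-\Delta_g$ is real and self-adjoint), transposing this identity yields $I = T_\la \circ (-\Delta_g-(\la+i)^2) + R_\la$ with $R_\la$ the transpose of the error. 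The symmetry $F(x,y,\la)=F(y,x,\la)$ from the radial dependence on $d_g(x,y)$, together with the symmetric cutoff $\eta_\delta$, ensures that $T_\la^t$ has the same structure as $T_\la$ at leading order, with any asymmetry from the Hadamard coefficients $\alpha_\nu$ absorbed into the smooth amplitude.

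With this setup, the kernel bounds \eqref{h.9}--\eqref{h.11} for $T_\la$ follow directly from Lemma~\ref{lemmah1} and the support property of $\eta_\delta$: the oscillatory representation \eqref{h.10} is just the second conclusion of Lemma~\ref{lemmah1} restricted to $\mathrm{supp}\,\eta_\delta$, with smooth factors from $\eta_\delta$, $\alpha_\nu$, and $|g|^{1/2}$ absorbed into $a_\la$; the bounds \eqref{h.11} on the near-diagonal region $d_g(x,y)\le\la^{-1}$ are the first conclusion of Lemma~\ref{lemmah1}. Since $\delta$ can be chosen smaller than the injectivity radius, $d_g(x,y)$ and these smooth factors have derivatives of the required size, giving the $O_\alpha((d_g)^{-|\alpha|})$ amplitude bounds.

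The remainder $R_\la$ splits into two pieces according to the two error terms in \eqref{h.2'}. The first, $\eta_\delta\cdot(\Delta_g\alpha_N)\cdot F_N$, is bounded uniformly in $\la\ge 1$ by \eqref{h.4} once $N$ is taken large enough depending on $n$, and contributes to $b_\la$. The commutator $[\eta_\delta,\Delta_g]F = -(\Delta_g\eta_\delta)F - 2\langle\nabla\eta_\delta,\nabla F\rangle_g$ is supported in the annulus $d_g(x,y)\in[\delta/2,\delta]$, which lies in the regime $d_g\ge\la^{-1}$ for all $\la\ge 1$, so $F$ there admits the oscillatory representation of Lemma~\ref{lemmah1}. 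Differentiating the phase $e^{-i\la d_g}$ in the gradient term produces the leading factor of $\la$, yielding an expression of the form $\la^{(n-1)/2}e^{-i\la d_g(x,y)} c_\la(x,y)$ with $c_\la$ smooth and supported in the annulus---this is exactly $r_\la$ in \eqref{h.12}--\eqref{h.13}. The subleading contributions (from $\Delta_g\eta_\delta$ acting on $F$, and from derivatives landing on the amplitude $a_\la$) are of order $\la^{(n-3)/2}$ times smooth bounded functions on the annulus, hence uniformly bounded in $\la$ and absorbed into $b_\la$.

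The main point requiring care will be the bookkeeping of the volume-element factor $|g|^{1/2}$ in passing between \eqref{h.2'} (where the delta is Lebesgue) and the operator identity (where kernels are paired against the Riemannian measure), and the symmetry argument needed to get the order $T_\la\circ(-\Delta_g-(\la+i)^2)$ rather than the reverse. Neither is a serious obstruction: the first is a pointwise normalization, and the second is handled by the transpose symmetry since both $\eta_\delta$ and the leading behavior of $F$ are symmetric in $(x,y)$.
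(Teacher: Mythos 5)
Your proposal is correct and takes essentially the same route as the paper, whose entire proof of this proposition is the single sentence ``Using \eqref{h.2'} and taking adjoints we find that Lemma~\ref{lemmah1} yields the following'': you transpose \eqref{h.2'} using the symmetry of $-\Delta_g-(\la+i)^2$ under the unconjugated pairing and read the kernel bounds off Lemma~\ref{lemmah1}, which is exactly the intended argument (and your use of the transpose rather than the Hermitian adjoint is the right reading of ``adjoint'' here, since the latter would produce $(\la-i)^2$). One small slip: the annulus $d_g(x,y)\in[\delta/2,\delta]$ lies in the regime $d_g\ge\la^{-1}$ only once $\la\ge 2/\delta$, not for all $\la\ge1$; but for $1\le\la<2/\delta$ the commutator term is trivially $O_\delta(1)$ (bounded $\la$, $d_g$ bounded away from zero, and $F$ smooth there), so it can be absorbed into $b_\la$, whose bound is allowed to depend on $\delta$.
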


The oscillatory integral operators with kernels $T_\la(x,y)$ ($d_g(x,y)\ge \la^{-1})$ and $r_\la(x,y)$ satisfy the
Carleson-Sj\"olin condition (see \cite{CarlesonSjolin} and \cite{SFIO2}).  Consequently, just as was done in
\cite{sogge88}, one can estimate the operators $T_\la$ and $R_\la$ using the oscillatory integral theorems
of H\"ormander~\cite{HormanderFLP} when $n=2$ and Stein~\cite{steinbeijing} when $n\ge 3$.

The bounds that we shall require are the following:

\begin{prop}\label{proph2}  Let $\sigma(p)$ be as in \eqref{a.3} and $p_c$ as in \eqref{a.6}
Then for $\la\ge 1$
\begin{equation}\label{h.14}
\|T_\la f\|_{L^p(M)}\le C_0\la^{\sigma(p)-1}\|f\|_{L^2(M)}
\end{equation}
and
\begin{equation}\label{h.15}
\|R_\la f\|_{L^p(M)}\le C_\delta \la^{\sigma(p)} \|f\|_{L^2(M)},
\end{equation}
provided that $p\in [p_c,\infty]$ if $n=2$ or $3$, $p\in [p_c,\infty)$ if $n=4$ and
$p\in [p_c,\tfrac{2n}{n-4}]$ for $n\ge 5$.
Here, $C_0=C_0(M,g)$ is an absolute constant, while $C_\delta=C(\delta,M,g)$ depends on
$\delta$.  Additionally, if $n\ge 3$
and $B_r(x_0)$ denotes the geodesic ball of small radius $r>0$ about $x_0\in M$, we have
for $p\in [p_c,\tfrac{2n}{n-3})$
\begin{equation}\label{h.16}
\|T_\la f\|_{L^p(B_\delta(x_0))}\le C_{r,p} \|f\|_{L^r(B_{2\delta}(x_0))}
\quad \text{where } \, \, \tfrac1r-\tfrac1p=\tfrac2n,
\end{equation}
with $C_{r,p}=C(M,g,r,p)$ independent of small $\delta>0$.
\end{prop}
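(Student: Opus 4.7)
The plan is to prove the three bounds by splitting the kernels of $T_\lambda$ and $R_\lambda$ into pieces indexed by the scale of $d_g(x,y)$, and applying a different technique to each piece. I would first establish the critical-exponent estimates at $p=p_c$, and then obtain the full range $p\in[p_c,\infty]$ (or the appropriate sub-range) by interpolation.

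For the far-diagonal piece of $T_\lambda$, where $\lambda^{-1}\le d_g(x,y)\le\delta$, the representation \eqref{h.10} exhibits it as a Fourier integral operator with phase $\lambda d_g(x,y)$ and a smooth amplitude. The phase $d_g$ satisfies the Carleson--Sj\"olin non-degeneracy condition on the cotangent bundle away from the diagonal, which is the geometric input already used in \cite{sogge88}. Applying Stein's oscillatory integral theorem for $n\ge 3$ or H\"ormander's theorem for $n=2$ to each dyadic shell $d_g(x,y)\sim 2^{-j}$ and summing the resulting geometric series would yield the bound $\lambda^{\sigma(p_c)-1}\|f\|_2$. The $\lambda^{-1}$ gain relative to Sogge's spectral-cluster bound $\lambda^{\sigma(p_c)}$ comes directly from the $\lambda^{(n-3)/2}$ prefactor in \eqref{h.10}, reflecting that $T_\lambda$ is a local resolvent rather than a spectral projector. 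The near-diagonal piece, where $d_g(x,y)\le\lambda^{-1}$, is controlled via Schur's test (and a Hardy--Littlewood--Sobolev argument) applied to the Riesz-type pointwise bound \eqref{h.11}; its contribution is smaller than the far-diagonal one by a positive power of $\lambda$ and is therefore absorbed.

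For exponents $p$ in the full range of \eqref{h.14}, I would interpolate the $L^2\to L^{p_c}$ bound with an appropriate higher-$p$ endpoint. When $n=2$ or $3$, the local square-integrability of $(d_g)^{-(n-1)}$ (respectively of $|\log d_g|$) yields $L^2\to L^\infty$ directly from Cauchy--Schwarz, delivering the full range $p\in[p_c,\infty]$; for $n=4$ one loses the $L^\infty$ endpoint but obtains $L^2\to L^p$ for every $p<\infty$ by the same route; for $n\ge 5$ the upper endpoint $p=\tfrac{2n}{n-4}$ reflects Sobolev scaling at the resolvent level and is reached by combining interpolation with the HLS bound on the near-diagonal piece. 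The bound \eqref{h.15} for $R_\lambda$ is simpler because $r_\lambda$ is already supported in the fixed annulus $d_g\in[\delta/2,\delta]$ where no singularity is present: Stein/H\"ormander applies directly without dyadic summation, giving the $L^2\to L^{p_c}$ bound of size $\lambda^{\sigma(p_c)}$, while the bounded remainder $b_\lambda$ trivially maps $L^2\to L^\infty$ by Cauchy--Schwarz.

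The uniform Sobolev bound \eqref{h.16} is the most delicate and the main obstacle. I would follow the Kenig--Ruiz--Sogge strategy, adapted to compact manifolds as in \cite{DKS} and \cite{HuangSogge}. The near-diagonal piece, bounded pointwise by the Riesz kernel $(d_g)^{2-n}$, maps $L^r\to L^p$ with $\tfrac1r-\tfrac1p=\tfrac2n$ by Hardy--Littlewood--Sobolev in local coordinates for any $1<r<p<\infty$, producing a constant independent of $\delta$. The challenging piece is the far-diagonal oscillatory part, which must satisfy an $L^r\to L^p$ estimate uniformly in $\lambda$ and $\delta$ in the restricted range $p\in[p_c,\tfrac{2n}{n-3})$. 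This can be achieved by Stein complex interpolation between a pointwise $L^1\to L^\infty$ kernel bound and the $L^2\to L^{p_c}$ oscillatory estimate obtained above; the upper endpoint $\tfrac{2n}{n-3}$ appearing in \eqref{h.16} is exactly the range constraint inherent in the Euclidean uniform Sobolev inequality \eqref{a.7}, and the lower endpoint $p_c$ is the Carleson--Sj\"olin threshold. Careful bookkeeping of the dyadic scaling is needed to keep the constant $C_{r,p}$ independent of small $\delta>0$.
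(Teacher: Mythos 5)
Your treatment of \eqref{h.14} and \eqref{h.15} is sound and is essentially what the paper does: it cites \cite[Lemma 4.2]{sogge88}, whose proof is precisely your dyadic decomposition of the far-diagonal oscillatory piece (on the shell $d_g\sim r$ the rescaled Carleson--Sj\"olin/Stein bound contributes $\la^{\sigma(p)-1}r^{1/2}$, so the sum over $\la^{-1}\le r\le\delta$ converges) together with Young/HLS for the near-diagonal Riesz piece; your attribution of the high-dimensional range restriction to the mapping properties of $|x|^{2-n}\mathbf 1_{|x|\le1}$, and the observation that this does not affect $R_\la$, also match the paper's remarks.

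The gap is in your argument for \eqref{h.16}. Interpolating a single $L^1\to L^\infty$ bound with the single $L^2\to L^{p_c}$ bound yields operator norms only along the segment joining $(1/r,1/p)=(1,0)$ and $(1/2,1/p_c)$ in the Riesz diagram, and that segment meets the Sobolev line $\tfrac1r-\tfrac1p=\tfrac2n$ in exactly one point, $p=2n^2/\bigl((n-1)(n-2)\bigr)$, which lies strictly inside $(p_c,\tfrac{2n}{n-3})$; so even granting both endpoints you recover one exponent pair, not the asserted range. Worse, the $L^1\to L^\infty$ norm of the far-diagonal piece is $\approx\la^{(n-3)/2}\cdot\la^{(n-1)/2}=\la^{n-2}$ (the kernel peaks at $d_g\approx\la^{-1}$), and the resulting interpolated bound carries a positive power of $\la$ unless the interpolation is performed shell by shell. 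The proofs the paper actually invokes, \cite[Theorem 4.1]{DKS} and \cite[Proposition 2.2]{HuangSogge}, embed the localized resolvent in the analytic family of its complex fractional powers, so that the $L^1\to L^\infty$ input is applied to a member of the family whose kernel is genuinely bounded, and they reach the off-duality exponents in $(\tfrac{2n}{n-1},\tfrac{2n}{n-3})$ by combining the duality-line $L^{p'}\to L^p$ estimates with auxiliary $L^2\to L^p$ and $L^r\to L^2$ bounds. A self-contained proof of \eqref{h.16} requires that machinery (or a citation of those results, which is the paper's route). Your remaining points for \eqref{h.16} --- HLS for the near-diagonal piece with $\delta$-independent constant, and the fact that the support condition \eqref{h.9} converts the global $L^r\to L^p$ bound into the localized statement --- are correct.
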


The bounds \eqref{h.14} and \eqref{h.15} for the ``critical'' case where $p=p_c$ are in
\cite[Lemma 4.2]{sogge88}.  They are a consequence of the aforementioned oscillatory
integral bounds of H\"ormander~\cite{HormanderFLP} and Stein~\cite{steinbeijing}.    The proof for the
special case where $p=p_c$ is easily seen to handle the other exponents arising in
\eqref{h.14} and \eqref{h.15}.  The limitations on the exponents in higher dimensions is due
to the fact that convolution with $|x|^{2-n} {\bf 1}_{|x|\le 1} $ in dimensions $n\ge 4$ only maps
$L^2(\Rn)\to L^p(\Rn)$ for the exponents in the first part of the proposition if ${\bf 1}_{|x|\le1}$
denotes the indicator function of the unit ball in $\Rn$.  However, this is only used in the bounds \eqref{h.14} but not the bounds \eqref{h.15} on $R_\lambda$, which hold for $p \in [p_c, \infty]$ in any dimension $n \geq 2$.

Since $T_\la$ satisfies \eqref{h.10} and \eqref{h.11} with constants independent of $\delta$ we see that if
$p$ and $r$ are as in \eqref{h.16} then we see that
\begin{equation}\label{h.17}
\|T_\la\|_{L^r\to L^p} \le C_{r,p}
\end{equation}
by appealing to \cite[Proposition 2.2]{HuangSogge} or the earlier ``local'' bounds in Theorem 4.1 of \cite{DKS}.
Both are variable coefficient versions of the ``uniform Sobolev estimates'' \eqref{a.7} of Kenig, Ruiz and the third author \cite{KRS}.  As in
these estimates, the exponent $p$ must belong to $(\tfrac{2n}{n-1},\tfrac{2n}{n-3})$ and we note that $p_c$ lies in
this interval, which ensures that \eqref{h.17} is valid for $p\in [p_c,\tfrac{2n}{n-3})$ as in
\eqref{h.17}.  By \eqref{h.9} we immediately see that \eqref{h.17} implies
the localized variant \eqref{h.16}.

\section{Proof of Theorem~\ref{theorem1}}

To prove Theorem~\ref{theorem1}, we first notice that \eqref{a.4} trivially holds when $p=2$.  Based on this and a
simple interpolation using H\"older's inequality one finds that the special case of \eqref{a.4} where $p=p_c$ as
in \eqref{a.6} implies the bounds for $2<p\le p_c$.

As a result, we just need to prove \eqref{a.4} when $p\in [p_c, \tfrac{2n}{n-3})$.  This will allow us
to use \eqref{h.16}  in order to exploit our assumption that $V\in L^{\frac{n}2}(M)$.

To use this, for each small $\delta>0$ choose a maximal $\delta$-separated collection of points
$x_j\in M$, $j=1,\dots,N_\delta$, $N_\delta\approx \delta^{-n}$.  Thus, $M=\cup B_j$ if $B_j$ is the $\delta$-ball about
$x_j$, and if $B_j^*$ is the $2\delta$-ball with the same center,
\begin{equation}\label{3}
\sum_{j=1}^{N_\delta} {\bf 1}_{B^*_j}(x)\le C_M,
\end{equation}
where $C_M$ is independent of $\delta \ll 1$ if ${\bf 1}_{B_j^*}$ denotes the indicator function of $B^*_j$.  Since
$V\in L^{n/2}(M)$, we can fix $\delta>0$ small enough so that
\begin{equation}\label{4}
C_M\Bigl(\, C_0\sup_{x\in M}\|V\|_{L^{n/2}(B(x,2\delta))}\, \Bigr)^p <1/2,
\end{equation}
where $C_0$ is the constant in \eqref{h.14} above and \eqref{10} below.

Next, by \eqref{mainformula},
$$u(x)=T_\la\bigl((-\Delta_g-(\la+i)^2)u\bigr)(x)+R_\la u(x),$$
which we can rewrite as
$$u=T_\la\bigl((-\Delta_g+V-(\la+i)^2)u\bigr)+R_\la u-T_\la(Vu).$$
By \eqref{h.14}, \eqref{h.15} and \eqref{h.16} we can estimate the $L^p$ norms of each of the terms
over one of our $\delta$ balls as follows:
\begin{multline}\label{10}
\|u\|^p_{L^p(B_j)}
\le \bigl( C\la^{\sigma(p)-1}\|(-\Delta_g+V-(\la+i)^2)u\|_{L^2(M)}+ C_\delta \la^{\sigma(p)}\|u\|_{L^2(M)}\bigr)^p
\\
+\bigl(C_0\|Vu\|_{L^r(B_j^*)}\bigr)^p,
\end{multline}
Here, as in \eqref{h.16} the constant $C_0$ occurring in the last term, though depending on
$p$, is independent of $\delta$, and, moreover,
$$\frac1r=\frac{2}n+\frac1p.$$
Consequently, by H\"older's inequality,
\begin{equation*}
\|Vu\|_{L^r(B_j^*)}\le \|V\|_{L^{n/2}(B_j^*)} \, \|u\|_{L^p(B^*_j)}.
\end{equation*}
Combining this with \eqref{10} yields
\begin{multline}\label{ball}
\|u\|^p_{L^p(B_j)}
\le \bigl( C\la^{\sigma(p)-1}\|(-\Delta_g+V-(\la+i)^2)u\|_{L^2(M)} + C_\delta \la^{\sigma(p)}\|u\|_{L^2(M)} \bigr)^p
\\
+\bigl(C_0\|V\|_{L^{n/2}(B_j^*)} \|u\|_{L^p(B_j^*)}\bigr)^p,
\end{multline}

Since $M$ is the union of the $B_j$, and the number of these balls is $\approx \delta^{-n}$, if we add up the bounds in
\eqref{ball} and use \eqref{3} and \eqref{4} we get
\begin{align*}
\|&u\|^p_{L^p(M)}\le \sum_j \|u\|^p_{L^p(B_j)}
\\
&\le C'_\delta\bigl(  \la^{\sigma(p)-1}\|(-\Delta_g+V-(\la+i)^2)u\|_{L^2(M)}+\la^{\sigma(p)}\|u\|_{L^2(M)} \bigr)^p
\\
&\qquad\qquad\qquad\qquad\qquad\qquad\qquad\qquad
+\bigl(\sup_j C_0\|V\|_{L^{n/2}(B_j^*)}\bigr)^p\sum_j \|u\|^p_{L^p(B_j^*)}
\\
&\le C'_\delta\bigl(  \la^{\sigma(p)-1}\|(\Delta_g+V-(\la+i)^2)u\|_{L^2(M)} + \la^{\sigma(p)}\|u\|_{L^2(M)}\bigr)^p
\\
&\qquad\qquad\qquad\qquad\qquad\qquad\qquad\qquad
+C_M\bigl(\sup_j C_0\|V\|_{L^{n/2}(B_j^*)}\bigr)^p\|u\|^p_{L^p(M)}
\\
&\le C'_\delta\bigl( \la^{\sigma(p)-1}\|(-\Delta_g+V-(\la+i)^2)u\|_{L^2(M)} + \la^{\sigma(p)}\|u\|_{L^2(M)}\bigr)^p
+\tfrac12\|u\|^p_{L^p(M)},
\end{align*}
which of course implies \eqref{a.4}.

\section{Quasimode  estimates in three dimensions}\label{3dsection}

In this section we shall prove \eqref{a.11} when $n=3$.  As before, the special case where $p=p_c=4$  implies
the bounds for $2<p\le p_c$ and so we shall assume that $4\le p\le \infty$.

By taking adjoints in \eqref{mainformula}  (see also \eqref{h.2'}), we have
$$I=\bigl(-\Delta_g-(\la+i)^2\bigr) \circ T^*_\la +R^*_\la
\quad \text{on } \, \, C^\infty(M),$$
where the kernels of $T^*_\la$ and $R^*_\la$ are $T_\la(y,x)$ and $R_\la(y,x)$, respectively, with the
latter as in Proposition~\ref{proph1}.

To prove \eqref{a.11} it suffices to show that
\begin{multline}\label{3.3}
\bigl| \int u \, \psi \, dx \bigr|\le C_V \la^{\sigma(p)}\bigl(\la^{-1}\|(-\Delta_g+V-(\la+i)^2)u\|_2+\|u\|_2\bigr) \, +\, \tfrac12\|u\|_p,
\\ \text{for } \, \, \, \psi\in C^\infty(M) \, \, \, \text{with } \, \, \|\psi\|_{p'}=1,
\end{multline}
assuming that $u\in \Dom(H_V)$.  If we abbreviate the left side as $|(u,\psi)|$ then by the above
\begin{align}\label{3.4}
|(u,\psi)|&\le |(u,(-\Delta_g-(\la+i)^2)\circ T^*_\la \psi)|+|(u,R^*_\la \psi)|
\\
&\le \bigl| \bigr((-\Delta_g+V-(\la+i)^2)u, T^*_\la \psi\bigr)\bigr| +|(u,R^*_\la \psi)|+|(Vu,T^*_\la \psi)|. \notag
\end{align}

By duality, \eqref{h.14} yields $\|T^*_\la\|_{L^{p'}\to L^2}=O(\la^{\sigma(p)-1})$ and so
\begin{multline}\label{4.3} \bigl| \bigr((-\Delta_g+V-(\la+i)^2)u, T^*_\la \psi\bigr)\bigr|
\le \|(-\Delta_g+V-(\la+i)^2)u\|_2 \|T^*_\la \psi\|_2
\\
\le C\la^{\sigma(p)-1} \|(-\Delta_g+V-(\la+i)^2)u\|_2.
\end{multline}
Similarly, by \eqref{h.15}
\begin{equation}\label{4.4}
|(u,R^*_\la \psi)|\le \|u\|_2\|R^*_\la\psi\|_2\le C\la^{\sigma(p)}\|u\|_2.
\end{equation}
Thus, the left side of \eqref{3.3} is bounded by the first two terms in the right side plus
$$|(Vu,T^*_\la\psi)|.$$

To handle this, we shall use the fact that Sobolev embeddings give that $L^\infty(M)\subset \Dom(-\Delta_g+V)$
if $n=2,3$ (see \S 6). Thus, by \eqref{a.10}, \eqref{h.10} and \eqref{h.11}
\begin{equation}\label{4w}
T_\la(Vu)(x)=\int_M T_\la(x,y)\, V(y)\, u(y)\, dy, \quad
u\in \Dom(-\Delta_g+V),\end{equation}
is given by an absolutely convergent integral, as is $|(Vu,T^*_\la\psi)|.$  Hence, by Fubini's theorem
$$|(Vu,T^*_\la\psi)|=|(T_\la(Vu),\psi)|\le \|T_\la(Vu)\|_p\|\psi\|_{p'}=\|T_\la(Vu)\|_p.$$

For each fixed finite $p$, i.e., $p\in [4,\infty)$ we can repeat the arguments from the previous section to see
that if $1/r=1/p+2/3$, then by \eqref{h.16}, if $\delta>0$ is small enough, we can find a collection
of $\delta$-balls $B_j$ so that if $B^*_j$ is the double then
\begin{equation*}
\|T_\la(Vu)\|^p_{L^p(M)}\le \sum_j\|T_\la(Vu)\|_{L^p(B_j)}^p\le C_0^p\sum_j \|V\|^p_{L^{3/2}(B_j^*)}\|u\|^p_{L^p(B_j^*)}
\le 2^{-p}\|u\|_p^p.
\end{equation*}
This along with the earlier bounds for the first two terms in the right side of \eqref{3.4} yields \eqref{3.3}
for $p\in [4,\infty)$.

We cannot use this argument to handle the case where $p=\infty$ as \eqref{h.16} breaks down in this case.
On the other hand,
by Proposition~\ref{proph1}, $|T_\la(x,y)|\le C(d_g(x,y))^{-1}{\bf 1}_{d_g(x,y)<\delta}(x,y)$ and so the
Kato condition \eqref{a.10} ensures that
$$\|T_\la(Vu)\|_\infty\le \tfrac12\|u\|_\infty$$
if $\delta>0$ is small enough.  By the above this implies that \eqref{3.3} is also valid when $p=\infty$ and
$n=3$, which completes the proof of the three-dimensional results in Theorem~\ref{theorem2}.

%
%

\section{Quasimode estimates in two dimensions}\label{2dsection}

Let us now prove the estimates in Theorem~\ref{theorem2} when $n=2$.  This is a unique case
since the off-diagonal uniform Sobolev estimates of Kenig, Ruiz and the third author \cite{KRS}
do not hold in two dimensions.  Consequently, we cannot use an inequality like
\eqref{h.16} when $n=2$.

Fortunately, we can prove \eqref{a.11} when $p=\infty$ and $n=2$ exactly as before since
we are assuming that $V\in {\mathcal K}(M)$.

To see this, we argue as in the preceding section to see that it is enough to prove \eqref{3.3} for
$p=\infty$ in order to obtain \eqref{a.11} for this exponent.

As before, \eqref{h.14} and \eqref{h.15} yield \eqref{4.3} and \eqref{4.4}, respectively for all
$p\in [p_c,\infty]=[6,\infty]$.  This means that for all such exponents the first two terms in the right
side of \eqref{3.4} are dominated by the first two terms in the right side of \eqref{3.3}.
Also, as before, $\Dom(-\Delta_g+V)\subset L^\infty(M)$ and so the Kato condition ensures that
$T_\la(Vu)$ is given by an absolutely convergent integral.  Thus, we would have \eqref{3.3} for
$p=\infty$ if we could choose $\delta>0$ so that
$$\|T_\la(Vu)\|_\infty \le \tfrac12\|u\|_\infty.$$
This follows exactly as before due to the fact that by Proposition~\ref{proph1}
$$|T_\la(x,y)|\le C\, h_2(d_g(x,y)) \, {\bf 1}_{d_g(x,y)<\delta}(x,y).$$

To finish the proof of the two-dimensional results in Theorem~\ref{theorem2}, it is now enough
to prove \eqref{3.3} when $p=p_c=6$ since this yields \eqref{a.11} for this case, and, by H\"older's
inequality the remaining cases follow from this, the trivial case where $p=2$ and the case where
$p=\infty$ that we just proved.

By the fact that the first two terms in the right side of \eqref{3.4} are under control for this exponent and the
above arguments it is enough to bound
$$\|T_\la(Vu)\|_6.$$
Unlike all the earlier arguments we cannot bound this by $\tfrac12\|u\|_6$ due to the aforementioned
fact that we cannot appeal to \eqref{h.16}.

To get around this, we shall use the fact that Proposition~\ref{proph1} yields
\begin{equation*}
|T_\la(x,y)|\le
\begin{cases} C_0\la^{-1/2}\bigl(d_g(x,y)\bigr)^{-1/2}, \quad \text{if } \, \, d_g(x,y)\ge \la^{-1}
\\
C_0|\log(\la d_g(x,y)/2)|, \quad \text{if } \, \, d_g(x,y)\le \la^{-1}.
\end{cases}
\end{equation*}
As a result,
$$\sup_y \bigl(\int_M |T_\la(x,y)|^6\, dx\bigr)^{1/6} \le C\la^{-1/3}.$$
Whence, by Minkowski's integral inequality,
$$\|T_\la(Vu)\|_6\le C\la^{-1/3}\|Vu\|_1\le C\la^{-1/3}\|V\|_1 \|u\|_\infty.$$
Since we are assuming that $V\in L^1(M)$ and we just proved that
$$\|u\|_\infty \lesssim \la^{1/2}\bigl(\la^{-1}\|(-\Delta_g+V-(\la+i)^2)u\|_2 +\|u\|_2\bigr),$$
we conclude that $\|T_\la(Vu)\|_6$ is also dominated by the first two terms in the right side of \eqref{3.3}, which finishes
the proof.

\section{Remaining bounds for higher dimensions}\label{heatsection}

In this section we shall prove the bounds in Theorem~\ref{theorem2} for $n\ge 4$.  Since $L^{n/2}\subset L^2$ for
$n\ge4$, it follows that $C^\infty$ is an operator core for $H_V$ (see \cite{SimonSurvey}) and so
Theorem~\ref{theorem1} and the spectral theorem implies that \eqref{a.11} is valid when $2<p<\tfrac{2n}{n-3}$.
So it remains to prove this inequality in higher dimensions when $\tfrac{2n}{n-3}\le p<\tfrac{2n}{n-4}$, as well
as \eqref{a.12} for the remaining cases where $p\in [\tfrac{2n}{n-4},\infty]$.  We shall conclude the section by showing
that when $V\equiv 0$ \eqref{a.11} breaks down on any manifold if $p=\infty$ and $n=4$ or $p>\tfrac{2n}{n-4}$
and $n\ge 5$.

To prove the positive results, now let $R_\la: \, L^2\to L^2$ denote the spectral projection operator
corresponding to the interval $(2\la,\infty)$, i.e., $R_\la ={\bf 1}_{P_V>2\la}$,
so that
$$R_\la f=\sum_{\la_j >2\la} \langle f,e_j\rangle e_j,$$
where $\{e_j\}$ is an orthonormal basis of eigenfunctions of $P_V$ with eigenvalues $0<\la_1\le \la_2\dots$.
Recall that in \S~\ref{katosection} we argued the the spectrum of $P_V$ is discrete.

Using probabilistic methods, specifically the Feynman-Kac formula, this yields the same sort of bounds for $e^{-tH_V}$
since $V\in {\mathcal K}(M)$, i.e.
\begin{equation}\label{5.3'}
\|e^{-tH_V}\|_{L^p(M)\to L^q(M)}\lesssim t^{-\frac{n}2(\frac1p-\frac1q)}, \quad \text{if } \, \,
0<t\le 1, \, \, \, \text{and } \, 1\le p\le q\le \infty.
\end{equation}
To prove these, we shall use the fact that, if $V$ is in the Kato class, Sturm~\cite[Theorem 4.12]{Sturm} proved that the kernel
of $e^{-tH_V}$ satisfies the pointwise
bounds in \eqref{s.2}.  This implies that the bounds for $p=1$ and $q=\infty$ are valid, as well as the case
where $p=q=\infty$.  Since, by the spectral theorem, the heat operator is also uniformly bounded on $L^2$
when $0<t\le 1$, one gets the remaining cases in \eqref{5.3'} by interpolation.
See also the later work of Stollmann and Voigt~\cite[Theorem 5.1]{StollmannVoigt} and G\"uneysu \cite{Guneysu} for such results in a more general setting.
In Aizenman and Simon~\cite{AZ} it was shown that
one needs the assumption that $V\in {\mathcal K}(\Rn)$ to get reasonable heat operator bounds in the Euclidean setting and their arguments
extend to our setting.  Before the aforementioned results, Aizenman and Simon~\cite{AZ}  also showed that the
bounds in \eqref{5.3'} are valid for $\Delta+V$
in $\Rn$ if $V\in {\mathcal K}$, which we shall use in the final section.

Next, let $L_\la={\bf 1}_{P_V\le 2\la}$ denote the projection onto frequencies $\le 2\la$ so that
$I=L_\la +R_\la$ if $R_\la$ is as above.
We then claim that we can use the special case of \eqref{a.11} corresponding to $p=p_c$
along with
\eqref{5.3'} to prove
\begin{equation}\label{5.1'}
\|L_\la u\|_{L^p(M)}
\lesssim_{V,M} \la^{\sigma(p)-1}\|(-\Delta+V-(\la+i)^2)u\|_{L^2(M)}, \quad \text{if } \, \, p>p_c.
\end{equation}

To prove \eqref{5.1'} let us fix a nonnegative function $\beta\in C^\infty_0((1/2,1))$
satisfying $\int_{-\infty}^\infty \beta(t)\, dt=1$
and consider the Laplace transform
of the following $L^1$-normalized dilates of $\beta$
$$\tilde \beta_\la(\tau)=\int_0^\infty e^{-t\tau} \la^2 \beta(\la^2 t) \, dt, \quad \tau\ge 0.$$
Clearly, we have
$$C_0^{-1}\le \tilde \beta_\la(\tau)\le C_0, \quad \text{if } \, \, 0\le \tau \le 4\la^2,$$
for some uniform constant $C_0<\infty$.
As a result, by the spectral theorem, the operator
$\tilde L_\la f=\sum_{\la_j\le 2\la} \bigl(\tilde \beta_\la(\la_j^2)\bigr)^{-1}\langle f,e_j\rangle e_j$ satisfies
\begin{equation}\label{5.4}
\|\tilde L_\la\|_{L^2\to L^2}\le C_0, \quad
\text{and } \, \, \tilde \beta_\la(H_V)\circ \tilde L_\la =L_\la.
\end{equation}
Since
$$ \tilde \beta_\la(H_V)=\int_0^\infty e^{-tH_V} \, \la^2 \, \beta(\la^2 t)\, dt,$$
by \eqref{5.3'} we have the following bounds for these ``Bernstein-type'' operators
\begin{equation}\label{5.5}
\bigl\|  \tilde \beta_\la(H_V)\|_{L^p\to L^q}\lesssim \la^{n(\frac1p-\frac1q)}, \quad
\text{if } \, \, 2\le p\le q\le \infty.
\end{equation}

If we use the second part of \eqref{5.4}, \eqref{5.5} and the special case of \eqref{a.11} corresponding to $p=p_c$, we conclude that for $p>p_c$ we have
$$\|L_\la u\|_p\lesssim \la^{n(\frac1{p_c}-\frac1p)} \|\tilde L_\la u\|_{p_c}
\lesssim  \la^{\frac1{p_c}-1} \la^{n(\frac1{p_c}-\frac1p)}
\|(-\Delta_g+V-(\la+i)^2)\tilde L_\la u\|_{L^2(M)}.$$
Since $\sigma(p)=\tfrac1{p_c}+n(\tfrac1{p_c}-\tfrac1p)$ and
$$\|(H_V-(\la+i)^2)\tilde L_\la u\|_{L^2(M)}=\|\tilde L_\la(H_V-(\la+i)^2) u\|_{L^2(M)}
\le C_0 \|(H_V-(\la+i)^2) u\|_{L^2(M)},$$
by the first part of \eqref{5.4}, we obtain \eqref{5.1'}.

Using \eqref{5.5} and the spectral theorem also gives
\begin{equation}\label{6.5}
\|R_\la u\|_{L^p(M)}\lesssim \|(-\Delta_g+V)R_\la u\|_{L^2(M)}\lesssim
\|(-\Delta_g+V-\la+i)^2)u\|_{L^2(M)} \quad \text{if } \, p<\tfrac{2n}{n-4}.
\end{equation}
Since $\sigma(p)-1\ge 0$ when $p\ge \tfrac{2n}{n-3}$, this along with \eqref{5.1'} yields \eqref{a.11} when
$p\in [\tfrac{2n}{n-3},\tfrac{2n}{n-4})$.

For the remaining case, we note that \eqref{5.5} implies that $\|R_\la u\|_p$ is dominated by the last
term in the right side of \eqref{a.12}.  This along with \eqref{5.1'} gives us \eqref{a.12}, which finishes
the proof of the estimates in higher dimensions.

\bigskip

Let us conclude this section by showing that \eqref{a.11} need not hold if $p=\infty$ and $n=4$ or
$p>\tfrac{2n}{n-4}$ and $n\ge 5$.  We shall adapt the arugment in \cite[pp. 164--165]{SoggeTothZelditch}.

To prove these negative results we recall the local Weyl formula which says that for large $\mu$ we have
\begin{equation}\label{6.6}
\sum_{\la_j\le \mu}|e_j(x_0)|^2 \approx \mu^n, \quad \forall \, x_0\in M.
\end{equation}

To use this, fix a nonnegative Littlewood-Paley bump function $\beta\in C^\infty_0((1/2,2))$ satisfying
$1=\sum_{k=-\infty}^\infty \beta(r/2^k)$, $r>0$.  Then if we assume that $\la^2$ is an eigenvalue of
$-\Delta_g$, choose an eigenfunction $e_\la$ satisfying $\|e_\la\|_2=1$, fix $x_0\in M$ and set for
$0<\e <1/2$
$$u_\la(x)=e_\la(x)+\sum_{2^k\ge \la}2^{-(\frac{n}2+2)k}k^{-\frac12-\e}\,
\bigl(\beta(P/2^k)\bigr)(x,x_0).$$
Here $P=\sqrt{-\Delta_g}$, and
$$\bigl(\beta(P/2^k)\bigr)(x,y)=\sum \beta(\la_j/2^k)e_j(x) e_j(y)$$
is the kernel of the operator $\beta(P/2^k)$.  By \eqref{6.6}
\begin{equation}\label{4.1}\bigl\| \bigl(\beta(P/2^k)\bigr)(\, \cdot \, ,x_0)\bigr\|_2^2=\sum_{j=0}^\infty \beta^2(\la_j/2^k)|e_j(x_0)|^2
\approx 2^{nk},
\end{equation}
and, similarly, since $\beta\ge 0$,
\begin{equation}\label{4.2}\bigl(\beta(P/2^k)\bigr)(x_0,x_0)\approx 2^{nk}.\end{equation}

Since $\beta(r/2^k)\beta(r/2^\ell)=0$ when $|k-\ell|\ge 10$, we conclude from \eqref{4.1} that
$$
\|u_\la-e_\la\|_2^2\lesssim\sum_{2^k\ge \la}2^{-(n+4)k}k^{-1-\e} \, 2^{nk} =o(1),$$
and so
$$\|u_\la\|_2=1+o(1).$$
Similarly, since $(\Delta_g+\la^2)e_\la=0$ and
$$(\Delta_g+(\la+i)^2)\bigl(\beta(P/2^k)\bigr)(x,x_0)=\sum_{j=0}^\infty (-\la_j^2+(\la+i)^2) \,\beta(\la_j/2^k)e_j(x)e_j(x_0),$$
we see from \eqref{4} that
\begin{align*}
\|(\Delta_g+(\la+i)^2)u_\la\|^2_2&\lesssim\sum_{2^k\ge \la} 2^{-(n+4)k}2^{4k}k^{-1-2\e} \, \sum_{\la_j\approx 2^k}
|e_j(x_0)|^2
\\
&\lesssim \sum_{2^k\ge \la}k^{-1-2\e}=o(1).
\end{align*}
Thus
\begin{equation}\label{6.9}
\la^{-1}\|(-\Delta_g-(\la+i)^2)u_\la\|_2 +\|u_\la\|_2 \approx 1.
\end{equation}

On the other hand, by \eqref{4.2} and the fact that $\beta\ge 0$, we obtain
\begin{align*}
u_\la(x_0)-e_\la(x_0)&=\sum_{\la\ge 2^k}
2^{-(\frac{n}2+2)k}k^{-\frac12-\e} \bigl(\beta(P/2^k)\bigr)(x_0,x_0)
\\
&\approx \sum_{2^k\ge \la} 2^{-(\frac{n}2+2)k}2^{nk}k^{-\frac12-\e}
\approx \sum_{2^k\ge \la}2^{(\frac{n}2-2)k}k^{-\frac12-\e}=\infty,
\end{align*}
if $n\ge 4$ since $0<\e<1/2$.  Since, by results in \cite{sogge88}, $\|e_\la\|_\infty =O(\la^{\frac{n-1}2})$, we conclude
from this that $u_\la\notin L^\infty$ and hence \eqref{a.11} need not hold for $p=\infty$ for such $n\ge4$.

It is straightforward to modify this argument to show that \eqref{a.11} need not hold as well when
$\tfrac{2n}{n-4}<p<\infty$ if $n\ge 5$.  For such $p$ and $n$  for small $\e>0$ let
$$u_\la(x)=e_\la(x)+\bigl( P^{-(\tfrac{n}2+2+\e)} \, \rho(P/\la)\bigr)(x,x_0)$$
where $\rho\in C^\infty$ vanishes near $0$ but equals one near infinity.

By arguing as before it is not difficult to check that as $\la \to \infty$
$$\|u\|_2\approx 1 \quad \text{and } \, \,
\|(\Delta_g+\la^2)u_\la\|_2 =o(1).$$
Furthermore, by arguing as in Chapter 4 of \cite{SFIO2} it is also straightforward to verify that,
if $\text{dist}(x,x_0)\lesssim \la^{-1}$, we have
$$|u_\la(x)-e_\la(x)|\approx  \, \bigl(\text{dist}(x,x_0)\bigr)^{-\frac{n}2+2+\e}.$$
Since the right side is not in $L^p$ of a ball of radius $\approx\la^{-1}$ about $x_0$ if $p>\tfrac{2n}{n-4}$ and
$\e>0$ is sufficiently small, we conclude that there are $u_\la$ satisfying \eqref{6.9}
and $u_\la\notin L^p(M)$ for such $p$ if $n\ge5$, which shows that \eqref{a.11} need not hold in this case, as claimed.

\section{Applications to spectral multipliers}\label{specmult}
Let $\chi_\lambda^V$ be the projection operator $\chi_\lambda^V f = \sum_{\lambda_j \in [\lambda, \lambda +1]}\langle f,e_{j}\rangle e_{j}$ as defined above.  In this section, we examine the consequences of Corollary 1.4 for some of the spectral multiplier theorems of significance in harmonic analysis, in particular, estimates for Bochner-Riesz means and the H\"ormander multiplier theorem. To this end it is helpful to observe the counterparts of our main theorem in dual spaces for $1 \leq p \leq \frac{2(n+1)}{n+3}$:
\begin{align}
  \|\chi_\lambda^V\|_{L^p(M) \to L^2(M)} &\lesssim \lambda^{\frac np - \frac{n+1}{2}}, \label{chilambdadual}\\
  \|\chi_\lambda^V\|_{L^p(M) \to L^{p'}(M)} &\lesssim \lambda^{n(\frac 1p-\frac{1}{p'}) - 1} . \label{chilambdaTT}
\end{align}
Much of the early motivation for developing these bounds when $V=0$ emerged from their applications to spectral multipliers.  In particular, in \cite{sogge87}, the third named author used the $L^p$ bounds in  \cite{sogge88} to give optimal bounds on Bochner-Riesz means in $L^p$ spaces when $\min(p,p') \in [1,\frac{2(n+1)}{n+3}]$.  The work \cite{sogge02} then expounded on this relation, clarifying the role of finite speed of propagation for the wave equation in such results, thus giving a means approaching cases where boundary conditions are nontrivial.  Moreover, in \cite{SeegerSoggePDOfunction}, Seeger and the third author used such bounds to extend the H\"ormander multiplier theorem \cite{HormanderMultiplier} to
functions of self-adjoint elliptic
 pseudodifferential operators on compact manifolds.

Recall for operators with nonnegative discrete spectrum, the Bochner-Riesz means $S_\lambda^\delta$ are defined by
\begin{equation*}
  S_\lambda^\delta f = \sum_{\lambda_j \leq \lambda} \left(1-\lambda_j^2/\lambda^2\right)^\delta \langle f,e_{j}\rangle e_{j}.
\end{equation*}
As before, throughout this section, without loss of generality, we shall assume that $H_V$ is positive.
A well-known necessary condition for $S_\lambda^\delta$ to be bounded on $L^p$ is that $\delta > \delta(p)$ where
  \begin{equation}\label{BRexponent}
    \delta(p) = \max(n|1/2-1/p|-1/2,0).
  \end{equation}
We now state the consequences of our main results for $L^p$ boundedness of $S_\lambda^\delta $ and the H\"ormander multiplier theorem.

\begin{theorem}\label{T:BR}
  Let $V \in L^{n/2}(M)\cap \K(M)$.  Suppose $\min(p,p') \in [1,\frac{2(n+1)}{n+3}]$ and that $\delta(p)$ is given by \eqref{BRexponent}.   Then for any $\delta>\delta(p)$, $S_\lambda^\delta$ is uniformly bounded on $L^p$.  That is, there exists a constant $C$ independent of $\lambda$ such that
  \begin{equation}\label{mainBRbound}
    \|S_\lambda^\delta \|_{L^p(M) \to L^p(M)} \leq C.
  \end{equation}
\end{theorem}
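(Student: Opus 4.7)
My plan is to adapt the approach of \cite{sogge87,sogge02,SeegerSoggePDOfunction}, which derives sharp $L^p$-bounds for Bochner--Riesz means of a self-adjoint elliptic operator from sharp $L^2\to L^p$ spectral projection bounds (here furnished by Corollary \ref{spec}) together with the finite propagation speed for the associated wave equation. Since $H_V$ is real and self-adjoint by Proposition \ref{self-adjoint}, $S_\lambda^\delta$ is self-adjoint on $L^2$, so by duality it suffices to establish \eqref{mainBRbound} when $p\in[1,p_c']$, with $p_c'=\tfrac{2(n+1)}{n+3}$.

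The first step is a dyadic decomposition of the multiplier near the endpoint $\tau=\lambda$. Fixing a nonnegative $\beta\in C_0^\infty((1/2,2))$ with $\sum_{k\ge 0}\beta(2^ks)=1$ for $s\in(0,1)$, we write
\begin{equation*}
(1-\tau^2/\lambda^2)_+^{\delta}=m_\flat(\tau)+\sum_{k\ge 1}m_k(\tau),\quad m_k(\tau):=(1-\tau^2/\lambda^2)_+^{\delta}\,\beta(2^k(1-\tau/\lambda)),
\end{equation*}
with $m_\flat$ a smooth function of $\tau/\lambda$ supported in $[0,\lambda/2]$. The multiplier $m_k$ is supported in an interval $I_k$ of length $\sim 2^{-k}\lambda$ near $\lambda$ and satisfies $\|m_k\|_\infty\lesssim 2^{-k\delta}$, with analogous estimates on suitably scaled derivatives. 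The low-frequency piece $m_\flat(P_V)$ is bounded on $L^p$ uniformly in $\lambda$ via the Bernstein-type bounds \eqref{5.5} stemming from the heat-kernel estimates \eqref{5.3'}.

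The main task is then to establish, for each $k\ge 1$, the dyadic bound
\begin{equation*}
\|m_k(P_V)\|_{L^p\to L^p}\lesssim 2^{-k(\delta-\delta(p))},
\end{equation*}
so that summing the geometric series in $k$ (which converges since $\delta>\delta(p)$) yields \eqref{mainBRbound}. Writing $m_k(P_V)=m_k(P_V)\,\chi_{I_k}^V$, where $\chi_{I_k}^V$ is the spectral projection of $P_V$ onto $I_k$, and summing the $L^p\to L^2$ bounds from \eqref{chilambdadual} over the $\sim 2^{-k}\lambda$ unit sub-intervals comprising $I_k$ by orthogonality gives
\begin{equation*}
\|\chi_{I_k}^V\|_{L^p\to L^2}^2\lesssim 2^{-k}\lambda\cdot\lambda^{2\delta(p)}.
\end{equation*}
Combined with $\|m_k\|_\infty\lesssim 2^{-k\delta}$ and a spatial Littlewood--Paley decomposition of $m_k(P_V)$ at scale $2^k/\lambda$---built from the Hadamard parametrix of Section \ref{hadamardsection} and finite propagation of $\cos(tP_V)$---this produces the desired $L^p\to L^p$ estimate exactly as in \cite{sogge02,SeegerSoggePDOfunction}.

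The main obstacle is this last step: the spectral projection bounds naturally control $m_k(P_V)$ only as an $L^p\to L^{p'}$ operator, and upgrading to an $L^p\to L^p$ estimate requires exploiting the spatial localization of $m_k(P_V)$ via finite propagation of $\cos(tP_V)$ together with sharp kernel estimates. Verifying that the kernel arguments from \cite{sogge02} survive in the presence of a critically singular potential $V\in L^{n/2}(M)\cap\K(M)$ is the technical crux; this is made possible by the Hadamard parametrix analysis of Section \ref{hadamardsection} combined with the heat-kernel bounds of Section \ref{heatsection} (which provide the substitute for smooth-coefficient kernel bounds used in the unperturbed case).
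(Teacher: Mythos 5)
Your overall strategy is the one the paper itself follows: the dyadic decomposition of $(1-\tau^2/\lambda^2)_+^\delta$ near $\tau=\lambda$, the use of the dual projection bounds \eqref{chilambdadual} summed over unit subintervals, the Bernstein-type bounds \eqref{5.5} for the low-frequency piece, and the upgrade from $L^p\to L^2$ to $L^p\to L^p$ via spatial localization coming from finite propagation speed --- this is exactly the method of \cite{sogge02} that the paper invokes (the paper also notes one may alternatively cite the abstract criteria of Chen--Ouhabaz--Sikora--Yan, for which \eqref{chilambdaTT} verifies the condition $\text{SC}^{2,1}_{p,2}$).

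The gap is that you invoke finite propagation speed for $\cos(t\sqrt{H_V})$ as if it were freely available, whereas for a critically singular $V\in L^{n/2}(M)\cap\K(M)$ this is precisely the ingredient that must be established and is the only genuinely new step in the proof. The classical energy-estimate/Gronwall argument requires $V$ bounded; Chernoff's result covers $V\in L^{n/2}$ only when $n\ge 5$ (needing $q>2$ for $n=4$ and $q=2$ for $n\le 3$), and Remling's argument requires $C^\infty(M)$ to be an operator core for $H_V$, which is not guaranteed here (indeed $L^{n/2}\not\subset L^2$ when $n=2,3$). The paper supplies Lemma~\ref{L:finitespeed}: truncate $V$ to $V_N^M=\mathbf 1_{\{-N<V<M\}}V$, note \eqref{finitespeedconclusion} for bounded potentials, and pass to the limit using the monotone convergence theorem for quadratic forms (increasing in $M$, decreasing in $N$), which yields strong resolvent convergence and hence strong convergence of $\cos(t\sqrt{H_{V_N^M}})$. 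Separately, you somewhat misplace the ``technical crux'': the point of the \cite{sogge02} method is that it requires no parametrix or kernel bounds for the wave group of $H_V$ beyond finite speed --- the Hadamard parametrix of \S\ref{hadamardsection} is for the resolvent of $-\Delta_g$ and enters only through the already-proved projection bounds --- so once Lemma~\ref{L:finitespeed} is in hand the argument of \cite{sogge02} goes through with no essential change, and your worry about kernel estimates ``surviving the singular potential'' does not arise.
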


\begin{theorem}\label{T:HormanderMult}
  Let $V \in L^{n/2}(M)\cap \K(M)$.  For $1 < r < \infty$, set $r^* = \min(r,r')$.  Suppose $m \in L^\infty(\R)$ satisfies
  \begin{equation*}
   \sup_{\mu >0} \|\beta(\cdot)m(\mu \cdot)\|_{H^{s}(\R)} < \infty, \qquad \text{ where }s>\max\left(n\left(\frac 1{r^*} - \frac 12\right),\frac 12\right),
  \end{equation*}
  whenever $\beta \in C^\infty_0((1/2,2))$.
  Then $m(\sqrt{H_V})$ is bounded on $L^r(M)$.
\end{theorem}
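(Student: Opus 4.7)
The plan is to adapt the strategy of Seeger and the third author in \cite{SeegerSoggePDOfunction} to the setting of $H_V$, using Corollary~\ref{spec} in place of the spectral projection bounds for $\sqrt{-\Delta_g}$. By duality it suffices to treat $1 < r \leq 2$, so $r^* = r$. Fix $\beta \in C_0^\infty((1/2,2))$ giving a Littlewood--Paley partition of unity away from the origin, and decompose $m = m_0 + \sum_{k \geq 1} m_k$ with $m_k(\tau) = \beta(\tau/2^k)\, m(\tau)$ and $m_0$ compactly supported near the origin. The low-frequency piece $m_0(\sqrt{H_V})$ is bounded on $L^r(M)$ directly via the Gaussian heat kernel bounds \eqref{5.3'}, so the task reduces to handling $\sum_{k \geq 1} m_k(\sqrt{H_V})$.

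I would then prove a weak-type $(1,1)$ estimate and interpolate against the trivial $L^2$ bound for $m(\sqrt{H_V})$, which is immediate from $m \in L^\infty$ and the spectral theorem. The weak-type bound rests on a Calder\'on--Zygmund decomposition together with two kernel estimates for $m_k(\sqrt{H_V})$: an off-diagonal H\"ormander-type $L^1$ integrability bound at spatial scale $2^{-k}$, and a pointwise bound on the diagonal. The off-diagonal bound comes from the Fourier representation
$$
m_k(\sqrt{H_V}) = \frac{1}{2\pi} \int \hat m_k(t)\, \cos(t\sqrt{H_V})\, dt,
$$
with the short-time portion $|t| \leq c$ controlled by finite propagation speed of $\cos(t\sqrt{H_V})$, and the long-time portion absorbed using the assumed Sobolev regularity $\|\beta(\cdot) m(2^k\cdot)\|_{H^s}$ together with heat-kernel decay. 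The diagonal bound, obtained via $TT^*$ and orthogonality across the unit spectral bands $[\mu,\mu+1]$ with $\mu \sim 2^k$, reduces to the $p = \infty$ case of \eqref{a.13}; it is precisely here that the Kato hypothesis becomes essential, since the examples following \eqref{a.15} show that $V \in L^{n/2}$ alone does not yield $L^2 \to L^\infty$ cluster estimates. Balancing the dyadic gains in the two kernel estimates produces the sharp threshold $s > n(1/r^* - 1/2)$, while the auxiliary requirement $s > 1/2$ arises from a Cauchy--Schwarz step converting $L^2$ to $L^1$ control on dyadic annuli.

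The principal technical obstacle is verifying finite propagation speed for $\cos(t\sqrt{H_V})$, together with the quantitative short-time kernel control, under only the hypothesis $V \in L^{n/2}(M) \cap \mathcal{K}(M)$. Since $H_V$ shares its principal symbol with $-\Delta_g$, finite propagation should persist via the standard energy estimate for $(\partial_t^2 + H_V) u = 0$ applied to data in the form domain constructed in Section~\ref{katosection}, possibly through approximation by bounded truncations $V_j \to V$ followed by a strong-resolvent stability argument. Once this is secured, the remainder of the proof tracks \cite{SeegerSoggePDOfunction} closely, with the cluster estimate \eqref{a.13} and the heat kernel bounds of Section~\ref{heatsection} playing the roles of their unperturbed analogues.
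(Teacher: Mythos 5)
Your proposal is correct in substance and rests on exactly the two nontrivial inputs that the paper uses: the spectral cluster bounds of Corollary~\ref{spec} (equivalently \eqref{chilambdadual}--\eqref{chilambdaTT}) and finite propagation speed for $\cos(t\sqrt{H_V})$. The difference is one of execution. You propose to carry out the Seeger--Sogge argument by hand -- Littlewood--Paley decomposition of $m$, a weak-type $(1,1)$ bound via Calder\'on--Zygmund, off-diagonal control from the cosine transform representation plus finite speed, diagonal control from the $L^1\to L^2$ cluster bound (dual to the $p=\infty$ case of \eqref{a.13}, and you are right that this is where the Kato hypothesis is indispensable) -- whereas the paper short-circuits all of this by verifying the hypotheses of the abstract multiplier theorem of Chen--Ouhabaz--Sikora--Yan (their Theorem B): finite propagation speed together with the condition $\text{SC}^{2,1}_{p,2}$, which their Proposition I.14 derives from \eqref{chilambdaTT}. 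Since that abstract theorem is itself an axiomatization of the Seeger--Sogge method, the two routes are essentially equivalent; yours is self-contained but longer, the paper's is shorter but outsourced. You also correctly flag finite propagation speed as the real obstacle and propose the right mechanism (truncation of $V$ plus strong resolvent convergence), which is precisely Lemma~\ref{L:finitespeed}. One detail worth noting: because $V$ is unbounded both above and below, a single monotone family of bounded truncations does not exist, and the paper's proof runs the monotone convergence theorem for forms twice -- first increasing in the upper cutoff ($V_N^M \nearrow V_N$) and then decreasing in the lower cutoff ($V_N \searrow V$) -- to get strong resolvent convergence in two stages; your one-step sketch should be elaborated along these lines.
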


Theorems \ref{T:BR} and \ref{T:HormanderMult} are a consequence of Corollary 1.4, \eqref{chilambdadual}, \eqref{chilambdaTT}, and finite speed of propagation for the corresponding wave equation, namely \eqref{finitespeedconclusion2} below.  Indeed, once the latter is observed, the aforementioned method in \cite{sogge02} proves Theorem \ref{T:BR} with no essential change in the proof.  Moreover, results of Chen, Ouhabaz, Sikora, and Yan \cite[Theorem C(ii)]{ChenOuhabazSikoraYanRestrictionSpectralMultipliers} give rather general sufficient conditions which ensure the Bochner-Riesz means associated to a nonnegative self-adjoint operator satisfy \eqref{mainBRbound}, and these conditions are satisfied here.  In particular, in Proposition I.14 of that work, the authors show that \eqref{chilambdaTT} is enough to imply that the crucial condition ``$\text{SC}_{p,2}^{2,1}$'' on p.225 is satisfied for $p \in [1,\frac{2(n+1)}{n+3}]$.  These same hypotheses (finite propagation speed and $\text{SC}_{p,2}^{2,1}$) are also enough to yield Theorem \ref{T:HormanderMult} given Theorem B there.

\begin{remark}
It is now known that slightly weaker versions of the H\"ormander multiplier theorem follow from heat kernel methods.  In particular, Alexopoulos \cite[Theorem 6.1]{AlexopoulosSpectralMultipliersMarkov} showed that whenever the heat kernel satisfies Gaussian upper bounds of the form \eqref{s.2}, then $m(\sqrt{H_V})$ is bounded on $L^r(M)$ for $1<r< \infty$ provided the stronger hypothesis
\begin{equation}\label{AlexopHyp}
\sup_{\mu >0} \|\beta(\cdot)m(\mu \cdot)\|_{C^{s}(\R)} < \infty, \qquad s >n/2,
\end{equation}
is satisfied. See also \cite{DuongOuhabazSikora} for results of this type. As noted in \S\ref{heatsection}, results of Sturm \cite[Theorem 4.12]{Sturm} give these Gaussian upper bounds.  Strictly speaking, the hypotheses of Alexopoulos require uniform upper bounds $p_t(x,y) \leq C$ when $t \geq 1$. However, this can be achieved by replacing $V$ by $V+N$ for $N$ large enough as in \S\ref{kato}, since by the spectral theorem, this has the effect of multiplying the heat kernel by $e^{-Nt}$.  The hypothesis \eqref{AlexopHyp} is satisfied by the multipliers which yield the usual bound for the Littlewood-Paley square function, see \cite[Theorem 5, Ch. IV]{SteinSingularIntegrals}.  Among other things, this bound can be used to see that \eqref{6.5} is satisfied at the endpoint $p=\frac{2n}{n-4}$
if $n\ge5$.
\end{remark}


\begin{corollary}
  Let $V \in L^{n/2}(M)\cap \K(M)$ and let $1 < r < \infty$.  Let $\{\beta_j\}_{j \geq 0}$ be a sequence of bump functions on $\R$ satisfying $\beta_0(\xi) + \sum_{j=1}^{\infty} \beta_j(\xi) \equiv 1$ where $\beta_j(\xi) = \beta_1(2^{1-j}\xi)$, with $\supp(\beta_1) \subset \{|\xi| \in (\frac 12,2)\}$ and $\supp(\beta_0)\subset\{|\xi| \in (-\infty,1)\}$.  Define the Littlewood-Paley square function for $f \in L^r(M)$ by
  \begin{equation*}
    Sf = \left(\sum_{j=0}^{\infty}\left|\beta_j(\sqrt{H_V}) f \right| \right)^{1/2}.
  \end{equation*}
  Then there exists uniform constants $c_r,C_r$ such that $c_r \leq \|Sf\|_{L^r(M)}/\|f\|_{L^r(M)} \leq C_r$ for $f \neq 0$ in $L^r(M)$.
\end{corollary}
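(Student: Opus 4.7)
The plan is to deduce the two-sided square-function estimate from the H\"ormander-type multiplier theorem, Theorem~\ref{T:HormanderMult}, via the classical Rademacher randomization for the upper bound and a duality argument for the lower bound.

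First, I would prove $\|Sf\|_{L^r(M)} \lesssim \|f\|_{L^r(M)}$ as follows. Let $\{r_j(t)\}_{j \ge 0}$ denote the Rademacher functions on $[0,1]$, and for each $t$ introduce the randomized multiplier
\[
m_t(\xi) = \sum_{j \ge 0} r_j(t)\, \beta_j(\xi), \qquad \xi > 0.
\]
The key observation is that $m_t$ satisfies the hypothesis of Theorem~\ref{T:HormanderMult} uniformly in $t \in [0,1]$: given $\beta \in C^\infty_0((1/2,2))$, at any scale $\mu > 0$ only $O(1)$ of the $\beta_j(\mu\,\cdot)$ can have support intersecting that of $\beta$, and the dyadic-scaling identity $\beta_j(\xi) = \beta_1(2^{1-j}\xi)$ gives a uniform $H^s(\R)$ bound on each summand $\beta(\cdot)\beta_j(\mu\,\cdot)$. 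Theorem~\ref{T:HormanderMult} therefore gives $\|m_t(\sqrt{H_V})f\|_{L^r(M)} \lesssim \|f\|_{L^r(M)}$ with constant independent of $t$. Applying Khintchine's inequality pointwise in $x \in M$,
\[
\bigl(Sf(x)\bigr)^r \approx \int_0^1 \Bigl| \sum_{j \ge 0} r_j(t)\, \beta_j(\sqrt{H_V})f(x) \Bigr|^r dt,
\]
and then integrating in $x$ and invoking Fubini yields $\|Sf\|_{L^r(M)}^r \lesssim \int_0^1 \|m_t(\sqrt{H_V})f\|_{L^r(M)}^r\, dt \lesssim \|f\|_{L^r(M)}^r$.

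For the reverse inequality I would proceed by duality. Choose companion bumps $\widetilde\beta_j$ with $\widetilde\beta_j \beta_j = \beta_j$, supported on slightly fattened dyadic intervals so that the corresponding square function $\widetilde S$ satisfies the same upper bound on $L^{r'}(M)$ by the argument just given. Using that $H_V$ is self-adjoint and that the Littlewood-Paley sum $\sum_j \beta_j(\sqrt{H_V})f$ converges to $f$ in $L^2$ (and then in $L^r$, by a density argument based on the upper bound), one writes $f = \sum_j \widetilde\beta_j(\sqrt{H_V})\beta_j(\sqrt{H_V})f$. Pairing with $g \in L^{r'}(M)$ and applying Cauchy-Schwarz in $j$ pointwise gives
\[
|\langle f, g \rangle| \le \int_M Sf(x)\, \widetilde S g(x)\, dx \le \|Sf\|_{L^r}\, \|\widetilde S g\|_{L^{r'}} \lesssim \|Sf\|_{L^r}\, \|g\|_{L^{r'}};
\]
taking the supremum over $g$ yields $\|f\|_{L^r(M)} \lesssim \|Sf\|_{L^r(M)}$.

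The main obstacle I anticipate is verifying the uniform-in-$t$ H\"ormander hypothesis for the randomized multipliers $m_t$. Once this is in hand, the rest is a routine combination of Khintchine's inequality and the duality step; a minor side issue is establishing $L^r$-convergence of $\sum_j \beta_j(\sqrt{H_V})f$ for general $f \in L^r(M)$, which follows from the square-function upper bound by standard means.
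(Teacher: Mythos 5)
Your proposal is correct and follows essentially the same route the paper intends: the corollary is presented there as a direct consequence of the H\"ormander-type multiplier theorem (Theorem~\ref{T:HormanderMult}, or the Alexopoulos version via the heat-kernel bounds), applied through the classical Rademacher randomization and duality argument of \cite[Theorem 5, Ch.\ IV]{SteinSingularIntegrals}, which is exactly what you spell out. Your verification of the uniform-in-$t$ H\"ormander hypothesis for $m_t=\sum_j r_j(t)\beta_j$ and the duality step with the fattened bumps $\widetilde\beta_j$ are both sound.
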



Given the above, Theorems \ref{T:BR} and \ref{T:HormanderMult} are now a consequence of the following lemma.
\begin{lemma}[Finite propagation speed]\label{L:finitespeed}
  Suppose $H_V = -\Delta_g + V$, with $V \in L^{\frac n2}(M)\cap \K(M)$. Suppose $u,v \in L^2(M)$ satisfy
  $d_g(\supp(u),\supp(v)) = R$, then
\begin{equation}\label{finitespeedconclusion}
  \langle u, \cos(t \sqrt{H_V}) v \rangle =0, \qquad |t| \leq R.
\end{equation}
Consequently, if $\cos(t \sqrt{H_V})(x,y)$ denotes the integral kernel of $\cos(t \sqrt{H_V})$,
\begin{equation}\label{finitespeedconclusion2}
  \supp\left(\cos(t \sqrt{H_V})(\cdot,\cdot)\right) \subset \left\{(x,y) \in M \times M : d_g(x,y) \leq |t| \right\}.
\end{equation}
\end{lemma}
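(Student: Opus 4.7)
The plan is to reduce to the classical finite propagation speed for bounded smooth potentials by an approximation argument, using the Kato class hypothesis to upgrade form-convergence to strong convergence of the wave operators.

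First I would truncate, setting $V_k(x) = V(x)\, \mathbf{1}_{\{|V(x)| \le k\}}(x)$, so that $V_k \in L^\infty(M)$, $|V_k| \le |V|$, and $V_k \to V$ pointwise. Dominated convergence applied to \eqref{a.10} (using that $h_n(d_g(x,y))|V(y)|$ is integrable uniformly in $x$ on small balls since $V \in \K(M)$) shows that $V - V_k$ is uniformly Kato-small: for each $\e > 0$ there exist $k_\e$ and $C_\e$ such that
\begin{equation*}
\int_M |V - V_k|\,|w|^2\, dx \le \e \bigl\|\sqrt{-\Delta_g+1}\, w\bigr\|_2^2 + C_\e \|w\|_2^2, \qquad k \ge k_\e,
\end{equation*}
exactly as in the proof of Proposition \ref{self-adjoint}.

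Next, for fixed $k$, since $V_k \in L^\infty(M)$, $H_{V_k} = -\Delta_g + V_k$ is a bounded self-adjoint perturbation of $-\Delta_g$ with $\Dom(H_{V_k}) = H^2(M)$. For $v \in \Dom(H_{V_k})$, the solution $u(t,x) := \cos(t\sqrt{H_{V_k}})v(x)$ lies in $C^2(\R;L^2) \cap C^1(\R;H^1) \cap C(\R;H^2)$ and solves $\partial_t^2 u + H_{V_k}u = 0$. A direct computation gives the energy identity
\begin{equation*}
\partial_t\bigl(\tfrac12 |\partial_t u|^2 + \tfrac12 |\nabla_g u|_g^2 + \tfrac12 V_k |u|^2\bigr) = \divop_g\!\bigl(\Re(\partial_t u\cdot \overline{\nabla_g u})\bigr),
\end{equation*}
in which the $V_k$ terms cancel thanks to $V_k$ being real. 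Integrating over the backwards cone $\{(s,x): d_g(x,x_0) \le t_0 - s,\ 0 \le s \le t\}$ and applying the divergence theorem, then using $V_k \ge -\|V_k\|_\infty$ together with Gronwall to control the sign of the local energy, yields the classical conclusion: if $v$ vanishes on $B_{t_0}(x_0)$, then $\cos(t\sqrt{H_{V_k}})v$ vanishes on $B_{t_0-|t|}(x_0)$ for $|t| \le t_0$. This is \eqref{finitespeedconclusion} with $H_V$ replaced by $H_{V_k}$.

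Finally I would pass to the limit. The uniform Kato-smallness above implies, via the standard KLMN/monotone convergence theorem for forms (see \cite[Theorem VIII.25]{ReedSimon}), that the forms $q_{V_k}$ converge to $q_V$ in a sense yielding strong resolvent convergence $H_{V_k} \to H_V$. By Trotter-Kato \cite[Theorem VIII.20]{ReedSimon} and continuity of $z \mapsto \cos(t\sqrt{z})$ on $[0,\infty)$, $\cos(t\sqrt{H_{V_k}})f \to \cos(t\sqrt{H_V})f$ strongly in $L^2(M)$ for each fixed $t$ and $f \in L^2(M)$. Applying this with $f = v$ to the inner-product identity proved in the previous step yields \eqref{finitespeedconclusion} for $V$; the kernel support statement \eqref{finitespeedconclusion2} then follows by testing against indicator functions of disjoint balls $B_r(x_0)$, $B_r(y_0)$ with $d_g(B_r(x_0),B_r(y_0)) > |t|$ and invoking Lebesgue differentiation. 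The main obstacle is the strong-convergence step: one has to be sure that truncation in the $L^\infty$ sense gives convergence in the Kato seminorm (not just in $L^{n/2}$ or $L^1$), but this is exactly what dominated convergence in \eqref{a.10} provides, given that $V$ itself is Kato.
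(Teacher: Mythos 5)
Your proposal is correct and follows the same overall strategy as the paper: approximate $V$ by bounded potentials, invoke the classical energy/Gronwall argument for finite propagation speed when the potential is in $L^\infty$, and then pass to the limit using strong resolvent convergence together with the continuity of $z\mapsto\cos(t\sqrt z)$, which transports \eqref{finitespeedconclusion} to the limit; the kernel statement \eqref{finitespeedconclusion2} is deduced by the same measure-theoretic duality in both arguments. The one genuine difference is the mechanism for the limit. The paper truncates in two \emph{monotone} stages, first cutting off $V$ from above (forms increasing in $M$) and then from below (forms decreasing in $N$), precisely so that parts (a) and (b) of the monotone convergence theorem for forms apply and deliver strong resolvent convergence with no quantitative input. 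Your symmetric truncation $V_k=V\,\mathbf{1}_{\{|V|\le k\}}$ is \emph{not} monotone when $V$ changes sign, so the monotone convergence theorem you cite (\cite[Theorem VIII.25]{ReedSimon}) does not literally apply to your sequence; this is a citation mismatch rather than a mathematical gap, because the uniform Kato-smallness you establish for the tail, $\int|V-V_k|\,|w|^2\le \e\,\|\sqrt{-\Delta_g+1}\,w\|_2^2+C_\e\|w\|_2^2$ for $k\ge k_\e$, is strictly stronger than what monotonicity provides: combined with \eqref{semi} it shows the form difference $q_V-q_{V_k}$ has relative bound tending to zero, which yields norm (hence strong) resolvent convergence by the standard stability of the form-to-operator correspondence (e.g.\ the factorization $(H_{V_k}+c)^{-1}-(H_V+c)^{-1}=\bigl[(H_{V_k}+c)^{-1}|V-V_k|^{1/2}\bigr]\bigl[\operatorname{sgn}(V-V_k)|V-V_k|^{1/2}(H_V+c)^{-1}\bigr]$, each factor being $O(\sqrt\e)$). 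You should replace the reference by one of this type. What your route buys is a single-step, quantitative limit; what it costs is that you must verify the Kato seminorm of the tail $V\,\mathbf{1}_{\{|V|>k\}}$ vanishes (your dominated-convergence argument does this correctly, splitting into $d_g(x,y)\le r$ where the Kato condition applies and $d_g(x,y)>r$ where $h_n$ is bounded and $\|V\mathbf{1}_{\{|V|>k\}}\|_{L^1}\to0$), whereas the paper's monotone route needs no estimate at all beyond pointwise monotonicity.
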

When $n \geq 5$, this is a consequence of results of Chernoff \cite[Proposition 4.3]{Chernoff}.  In particular, it is shown that if $V \in L^q(M)$ with $q=2$ when $n \leq 3$, $q>2$ when $n=4$, and $q = n/2$ when $n \geq 5$, then \eqref{finitespeedconclusion}, \eqref{finitespeedconclusion2} are satisfied.  Similarly, Remling\footnote{Strictly speaking, Remling's work considers constant coefficient Laplacians rather than the Laplace-Beltrami operator considered here, but the arguments extend to our setting by standard energy estimates for the wave equation.}  \cite[Lemma 2.2]{Remling} observed that this holds whenever $C^\infty(M)$ is an operator core for $H_V$.  The argument below instead uses form cores for the quadratic forms defined by $V$, which are equivalent to operator cores for $\sqrt{H_V}$ (see e.g. \cite[p.606]{SimonComprehensiveIV}).

Note that \eqref{finitespeedconclusion2} is a consequence of \eqref{finitespeedconclusion} by typical measure theoretic considerations: if
\begin{equation*}
  \supp\left(\cos(t \sqrt{H_V})(\cdot,\cdot)\right) \cap \{d_g(x,y) > |t| \}
\end{equation*}
has positive measure, then one can find $u,v \in L^2(M)$ for which \eqref{finitespeedconclusion} fails to hold.

\begin{proof}
We first observe that if $V \in L^\infty(M)$, then  \eqref{finitespeedconclusion} holds by the usual energy estimates and Gronwall's inequality (see e.g. \cite[Lemma 2.3]{Remling}), or by the previously cited works \cite{Chernoff}, \cite{Remling}.
For integers $M,N \geq 1$, define
\begin{equation*}
  V_N^M :=  \mathbf{1}_{\{ -N<V(x)<M\}}V, \qquad V_N := \mathbf{1}_{\{ -N<V(x)\}}V.
\end{equation*}
Let $Q$ denote the quadratic form associated to $H_V$ and define the approximating forms
\begin{equation}\label{quadformsdef}
  Q_N^M(w) := \int_M |\nabla_g w|^2 + V_N^M |w|^2\,dx, \quad Q_N(w) := \int_M |\nabla_g w|^2 + V_N |w|^2\,dx,
\end{equation}
which generate corresponding operators $H_{V_N^M},H_{V_N}$ respectively.  In all cases, we assume the quadratic forms assume the value $+\infty$ whenever $w$ is not in the domain of the form.

We appeal to the monotone convergence theorem for forms in \cite[Theorem 7.5.18]{SimonComprehensiveIV}, using parts (a) and (b) for increasing and decreasing sequences respectively.  Since $Q_N^M(w) \leq Q_N^{M+1}(w)$, for each $M$ part (a) of the theorem yields strong resolvent convergence:
\begin{equation*}
  \lim_{M\to\infty}\|(H_{V_N^M} \pm i)^{-1} w-(H_{V_N} \pm i)^{-1} w\| =0 \quad\text{ for all } w\in L^2(M).
\end{equation*}
By \cite[Theorem VIII.20b]{ReedSimon} or \cite[Theorem 7.2.10]{SimonComprehensiveIV}, strong resolvent convergence implies the strong convergence $f(H_{V_N^M}) \to f(H_{V_N})$ as $M\to\infty$ for any bounded continuous $f$ on $\R$.  In particular, $\cos(t \sqrt{H_{V_N^M}}) \to \cos(t \sqrt{H_{V_N}})$ strongly which implies that since \eqref{finitespeedconclusion} holds for each $H_{V_N^M}$, it persists in the limit and is satisfied by $H_{V_N}$.


We now conclude the proof by taking limits as $N \to \infty$.  Since $Q_{N+1}(w) \leq Q_N(w)$ for each $w$, part (b) of the monotone convergence theorem for forms gives the strong convergence
\begin{equation*}
(H_{V_N} \pm i)^{-1} \to (H_{V} \pm i)^{-1} , \quad \text{ and hence } \cos\left(t \sqrt{H_{V_N}}\right) \to \cos(t \sqrt{H_{V}}) .
\end{equation*}
As before \eqref{finitespeedconclusion} thus persists in the limit.
\end{proof}

\section{Strichartz estimates for the wave equation}\label{strichartzsection}

Let us now see how the spectral projection estimates \eqref{a.13} in Corollary~\ref{spec} can also be used to prove natural
Strichartz estimates for $H_V=-\Delta_g+V$.  As above, without loss of generality, we shall assume that $H_V\ge 0$.

\begin{theorem}\label{strthm}  Let $(M,g)$ be a compact manifold of dimension $n\ge2$ and assume that
$V\in L^{n/2}(M)\cap {\mathcal K}(M)$.  Let $u$ be the solution of
\begin{equation}\label{8.1}
\begin{cases}
\bigl(\partial_t^2-\Delta_g+V(x)\bigr)u=0
\\
u|_{t=0}=f_0, \quad \partial_tu|_{t=0}=f_1.
\end{cases}
\end{equation}
Then
\begin{equation}\label{8.2}
\|u\|_{L^{\frac{2(n+1)}{n-1}}([0,1]\times M)}\le C_V\bigl(\|(I+P_V)^{1/2}f_0\|_{L^2(M)}
+\|(I+P_V)^{-1/2}f_1\|_{L^2(M)}\bigr),
\end{equation}
with $P_V$ denoting $\sqrt{H_V}$.
\end{theorem}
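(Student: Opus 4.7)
The plan is to deduce~\eqref{8.2} from the spectral projection bounds of Corollary~\ref{spec} through a Littlewood--Paley decomposition reducing to frequency-localized estimates, and then a $TT^*$ argument supported by a short-time dispersive kernel bound for the half-wave $e^{i\tau P_V}$. Using the positivity of $H_V$ established in~\S\ref{katosection}, write $u(t)=\cos(tP_V)f_0+P_V^{-1}\sin(tP_V)f_1$, set $q:=p_c=\tfrac{2(n+1)}{n-1}$, and fix a smooth dyadic Littlewood--Paley partition $\{\beta_\lambda\}_{\lambda=2^j,\,j\ge 0}$ of $[0,\infty)$.

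The first step is a reduction to a single-frequency estimate. The Littlewood--Paley square-function equivalence for $L^q(M)$ (the Corollary following Theorem~\ref{T:HormanderMult} in~\S\ref{specmult}), applied pointwise in $t$ and combined with Minkowski's inequality (valid because $q\ge 2$), yields
\begin{equation*}
\|u\|_{L^q([0,1]\times M)}^2 \lesssim \sum_{\lambda}\|\beta_\lambda(P_V)\,u\|_{L^q([0,1]\times M)}^2.
\end{equation*}
Summing the square of the per-frequency bound over $\lambda$ reproduces $\|(I+P_V)^{1/2}f_0\|_{L^2}^2+\|(I+P_V)^{-1/2}f_1\|_{L^2}^2$, so it suffices to prove
\begin{equation*}
\|\beta_\lambda(P_V)\,u\|_{L^q([0,1]\times M)} \le C\bigl(\lambda^{1/2}\|\beta_\lambda(P_V)f_0\|_{L^2}+\lambda^{-1/2}\|\beta_\lambda(P_V)f_1\|_{L^2}\bigr)
\end{equation*}
uniformly in the dyadic $\lambda\ge 1$.

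For the per-frequency estimate, I would apply $TT^*$ to the half-wave map $\mathcal T_\lambda: L^2(M)\to L^q([0,1]\times M)$, $\mathcal T_\lambda g:=\beta_\lambda(P_V)e^{itP_V}g$. A short calculation gives
\begin{equation*}
\mathcal T_\lambda\mathcal T_\lambda^* F(t,\cdot)=\int_0^1 K_\lambda(t-s)F(s,\cdot)\,ds,\qquad K_\lambda(\tau):=\beta_\lambda^2(P_V)\,e^{i\tau P_V},
\end{equation*}
so that the target bound $\|\mathcal T_\lambda\|_{L^2\to L^q_{t,x}}\lesssim \lambda^{1/2}$ is equivalent to $\|\mathcal T_\lambda\mathcal T_\lambda^*\|_{L^{q'}_{t,x}\to L^q_{t,x}}\lesssim \lambda$. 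The latter will be achieved by a Young-type convolution inequality in $t$ combined with Minkowski in $x$, once one has the short-time dispersive kernel estimate
\begin{equation*}
\|K_\lambda(\tau)\|_{L^{q'}(M)\to L^q(M)} \le C\,\lambda^{\frac{2n}{n+1}}\bigl(1+\lambda|\tau|\bigr)^{-\frac{n-1}{n+1}},\qquad |\tau|\le 1;
\end{equation*}
the scalar majorant is critical in weak-$L^{q/2}([0,1])$ with norm of order $\lambda$, matching the convolution exponents $\tfrac{2}{q}+\tfrac{1}{q'}=1+\tfrac{1}{q}$. The $f_1$ contribution is handled identically, the factor $P_V^{-1}\sim\lambda^{-1}$ on the band converting the weight $\lambda^{1/2}$ into $\lambda^{-1/2}$.

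The $\tau=0$ endpoint of the dispersive bound, $\|\beta_\lambda^2(P_V)\|_{L^{q'}\to L^q}\lesssim \lambda^{2n/(n+1)}$, is immediate from Corollary~\ref{spec} and its dual form~\eqref{chilambdadual}: expanding $\beta_\lambda^2(P_V)=\sum_{k\in[\lambda/2,2\lambda]}\beta_\lambda^2(k)\chi_k^V$ and using $\|\chi_k^V\|_{L^{q'}\to L^2}\|\chi_k^V\|_{L^2\to L^q}\lesssim k^{2\sigma(q)}$ with $\sigma(q)=\tfrac{n-1}{2(n+1)}$, summed over the $\approx\lambda$ unit spectral intervals in the band, gives the exponent $1+2\sigma(q)=\tfrac{2n}{n+1}$. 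The time-decay factor for $\lambda|\tau|\gg 1$ will come from Riesz--Thorin interpolation between the trivial $L^2\to L^2$ bound and the $L^1\to L^\infty$ dispersive estimate $\|K_\lambda(\tau)\|_{L^1\to L^\infty}\lesssim \lambda^n(\lambda|\tau|)^{-(n-1)/2}$ valid on $\lambda^{-1}\lesssim|\tau|\le\delta$; this free-wave bound is in turn to be obtained by stationary phase applied to the short-time Hadamard parametrix for $\cos(\tau P)$ ($P=\sqrt{-\Delta_g}$) recalled in~\S\ref{hadamardsection}, combined with a Duhamel iteration to absorb the potential. The principal obstacle is this last point: showing that the Kato-class bound on $V$ controls the Duhamel remainder uniformly in $\lambda$ and in $\tau\in[\lambda^{-1},\delta]$, so that the $(\lambda|\tau|)^{-(n-1)/2}$ decay rate inherited from the free equation is not destroyed. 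The Kato assumption is essential here because it encodes the short-time operator-norm smallness of $V$ relative to the free wave propagator, which allows the perturbative series to converge without sacrificing the dispersive rate.
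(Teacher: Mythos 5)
Your overall architecture (reduce to the half--wave operator, localize in frequency, run $TT^*$) is the classical route to Strichartz estimates, but it hinges on an ingredient that you do not prove and that the paper nowhere supplies: the frequency--localized dispersive bound
\begin{equation*}
\bigl\|\beta_\lambda^2(P_V)\,e^{i\tau P_V}\bigr\|_{L^1(M)\to L^\infty(M)}\lesssim \lambda^{n}\,(\lambda|\tau|)^{-\frac{n-1}{2}},\qquad \lambda^{-1}\lesssim|\tau|\le\delta,
\end{equation*}
for the \emph{perturbed} propagator. You propose to get it from the Hadamard parametrix for $\cos(\tau\sqrt{-\Delta_g})$ plus a Duhamel iteration absorbing $V$, and you yourself flag this as ``the principal obstacle'' without resolving it. That is where the argument genuinely breaks: the free wave kernel is a distribution concentrated on the light cone rather than a positive $L^1$ kernel, so the Kato condition \eqref{a.10} (which controls $\int h_n(d_g(x,y))|V(y)|\,dy$ against a \emph{positive} heat--type kernel) does not directly yield convergence of the Duhamel series in the $L^1\to L^\infty$ operator norm at the rate $(\lambda|\tau|)^{-(n-1)/2}$. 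Fixed--time dispersive estimates for wave equations with potentials merely in $L^{n/2}\cap\mathcal{K}$ are a hard open-ended problem in their own right (the paper's \S\ref{Rn} even cites \cite{iRodS}, \cite{Bui} for the special global situations where such dispersive bounds \emph{are} known, under smallness or global-Kato hypotheses). So as written the proof has a gap that cannot be filled by anything established in the paper. The numerology of your $TT^*$/weak-Young step is consistent (the majorant $\lambda^{2n/(n+1)}(1+\lambda|\tau|)^{-(n-1)/(n+1)}$ does lie in weak $L^{q/2}_\tau$ with norm $\approx\lambda$), but that only makes the missing kernel bound load-bearing.

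The paper's proof is designed precisely to avoid any dispersive estimate. After the same reduction to $\|e^{itP_V}f\|_{L^{p_c}([0,1]\times M)}\lesssim\|(I+P_V)^{1/2}f\|_2$, it decomposes $f$ into \emph{unit} spectral bands $\chi_k^V f$, applies the one-dimensional Sobolev embedding $L^2_t\to L^{p_c}_t$ to $|D_t|^{1/2-1/p_c}\bigl(\rho(t)e^{itP_V}f\bigr)$, exploits the almost-orthogonality in the time-frequency variable $\tau$ of the pieces coming from different bands (since $\operatorname{supp}\Hat\rho\subset(-1/2,1/2)$), and then invokes only the spectral projection bound $\|\chi_k^Vf\|_{p_c}\lesssim(1+k)^{1/p_c}\|f\|_2$ from Corollary~\ref{spec}. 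No information about the time evolution beyond the spectral theorem is used. If you want to salvage your approach, you would have to either prove the perturbed dispersive estimate (a substantial separate project) or replace the $TT^*$ step on each dyadic block by exactly this Sobolev-plus-orthogonality device, at which point the dyadic decomposition becomes unnecessary.
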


\begin{remark}
This exactly corresponds to the original $L^{\frac{2(n+1)}{n-1}}(\R\times \Rn)$ estimate of
Strichartz~\cite{Strichartz77} for the wave equation.  Indeed, if $M=\Rn$ and $H_V$ is the standard
Laplacian, the analog of \eqref{8.2} along with a scaling argument yields
\begin{equation}\label{8.3}
\|u\|_{L^{\frac{2(n+1)}{n-1}}(\R\times \Rn)}\lesssim \|f_0\|_{\dot H^{1/2}(\Rn)}+\|f_1\|_{\dot H^{-1/2}(\Rn)}.
\end{equation}
The variant of \eqref{8.2} with $V\equiv0$ can be proved using parametrices as was done by
Kapitanski~\cite{Kapitanski89} and Mockenhaupt, Seeger and the third author \cite{MSS} and this special case
of \eqref{8.2} is seen to yield the classical Strichartz estimate \eqref{8.3}.  Of course the existence of
eigenfunctions imply that, unlike \eqref{8.3}, on $(M,g)$ one cannot have the analog of \eqref{8.2}
where the norm in the left is taken over $\R\times M$.\end{remark}

\begin{proof}[Proof of Theorem~\ref{strthm}]  In \cite[Theorem 2.1]{blpcritical}, the authors show that the bounds \eqref{8.2} follows from Corollary \ref{spec}, and their proof works equally well in our circumstances.  
See also \cite{Nicola}.
Nonetheless, we include a proof for the sake of completeness which will serve as a model for certain global Strichartz
estimates that we shall obtain in $\Rn$ in \S\ref{Rn}.

If, as above, $p_c=\tfrac{2(n+1)}{n-1}$,
then to prove \eqref{8.2} it suffices to show
that
\begin{equation}\label{8.4}
\bigl\|e^{itP_V}f\bigr\|_{L^{p_c}([0,1]\times M)} \lesssim \|(I+P_V)^{1/2}f\|_{L^2(M)}.
\end{equation}
To prove this, it suffices to prove that whenever we fix $\rho\in {\mathcal S}(\R)$ satisfying
$\text{supp }\Hat \rho\subset (-1/2,1/2)$ we have
\begin{equation}\label{8.4'}\tag{8.4$'$}
\bigl\|\rho(t)e^{itP_V}f\|_{L^{p_c}(\R\times M)} \lesssim \|(I+P_V)^{1/2}f\|_{L^2(M)}.
\end{equation}

To prove this, we shall change notation a bit and let
$$\chi^V_kf=\sum_{\la_j\in (k,k+1]}E_jf, \quad E_jf=\langle f, e_{\la_j}\rangle e_{\la_j},$$
so that
$f=\sum_{k=0}^\infty \chi_k^Vf$.  Then, since $\sigma(p_c)=1/p_c$, \eqref{a.12} yields
\begin{equation}\label{8.5}
\|\chi_k^Vf\|_{L^{p_c}(M)}\lesssim (1+k)^{1/p_c}\|f\|_{L^2(M)}, \quad k=0,1,2,\dots .
\end{equation}

To use this, we first note that by Sobolev estimates
$$\bigl\|\rho(t)e^{itP_V}f\|_{L^{p_c}(\R\times M)} \lesssim
\bigl\|\, |D_t|^{1/2-1/p_c}\bigl(\rho(t)e^{itP_V}f\bigr)\bigr\|_{L^{p_c}_xL^2_t(\R\times M)}.$$
If we let
$$F(t,x)= |D_t|^{1/2-1/p_c}\bigl(\rho(t)e^{itP_V}f(x)\bigr)$$
denote the function inside the mixed-norm in the right, then
$$F(t,x)=\sum_{k=0}^\infty F_k(t,x),$$
where
$$F_k(t,x)= |D_t|^{1/2-1/p_c}\bigl(\rho(t)e^{itP_V}\chi_k^Vf(x)\bigr).$$
Consequently, its $t$-Fourier transform is
\begin{equation}\label{8.6}\Hat F_k(\tau,x)=|\tau|^{1/2-1/p_c}\sum_{\la_j\in [k,k+1)} \Hat \rho(\tau-\la_j)E_jf(x).\end{equation}
Since we are assuming $\text{supp }\Hat \rho \subset (-1/2,1/2)$, we conclude that
$$\int_{-\infty}^\infty F_k(t,x)\, \overline{F_\ell(t,x)}\, dt =(2\pi)^{-1}\int_{-\infty}^\infty
\Hat F_k(\tau,x)\, \overline{\Hat F_\ell(\tau,x)} \, d\tau =0
\quad \text{when } \, |k-\ell|>10.$$
As a result
\begin{multline*}
\bigl(\, \int_{-\infty}^\infty \, \bigl| \, |D_t|^{1/2-1/p_c}\bigl(\rho(t)e^{itP_V}f(x)\bigr)\, \bigr|^2 \, dt\, \bigr)^{1/2}
\\
\lesssim \bigl(\, \int_{-\infty}^\infty \sum_{k=0}^\infty |F_k(t,x)|^2 \, dt\, \bigr)^{1/2}
=(2\pi)^{-1/2} \bigl(\int_{-\infty}^\infty \sum_{k=0}^\infty |\Hat F_k(\tau,x)|^2 \, d\tau \, \bigr)^{1/2}.
\end{multline*}
Also, since $p_c>2$, we conclude that this implies that the square of the left side of \eqref{8.4'} is dominated by
$$\sum_{k=0}^\infty\int_{-\infty}^\infty \|\Hat F_k(\tau,x)\|_{L^{p_c}(M)}^2 \, d\tau.$$

Recalling \eqref{8.6}, the support properties of $\Hat \rho$, we see that this along with \eqref{8.5} and orthogonality
imply that the left side of \eqref{8.4'} is dominated by
\begin{align*}
\bigl(\, \sum_{k=0}^\infty \int_{-\infty}^\infty |\tau|^{1-2/p_c} \, &\bigl\|\sum_{\la_j\in [k,k+1)}\Hat \rho(\tau-\la_j)E_jf\bigr\|_{L^{p_c}(M)}^2 \, d\tau\bigr)^{1/2}
\\
&=\bigl(\,  \sum_{k=0}^\infty \int_{k-10}^{k+10} |\tau|^{1-2/p_c} \,
\bigl\| \sum_{\la_j\in [k,k+1)}\Hat \rho(\tau-\la_j)E_jf \bigr\|_{L^{p_c}(M)}^2 \, d\tau \, \bigr)
\\
&\lesssim \, \sum_{k=0}^\infty (1+k)^{1-2/p_c}(1+k)^{2/p_c}\|\chi_k^Vf\|_{L^2(M)}^2
\\
&=\bigl(\, \sum_{k=0}^\infty\|(1+k)^{1/2}\chi_k^Vf\|_{L^2(M)}^2\, \bigr)^{1/2}\approx \|(I+P_V)^{1/2}f\|_{L^2(M)}^2,
\end{align*}
as desired, which completes the proof. \end{proof}

\medskip
\noindent {\bf Remarks.}
We only assumed in Theorem~\ref{strthm} that $H_V\ge0$ to simplify the proof.  Since $H_V$ is bounded from
below due to the assumption that $V\in {\mathcal K}(M)$ this assumption can easily be removed
by applying \eqref{8.2} to the operators where $V(x)$ is replaced by $V(x)+N$ with $N$ sufficiently large.  One
just uses a simple argument involving the Duhamel formula and modifies \eqref{8.2} by replacing the
right side by
$$C_V\bigl(\|(H_V-i)^{1/4}f_0\|_2+\|(H_V-i)^{-1/4}f_1\|_2\bigr).$$

We would also like to remark that this argument shows that Strichartz's estimate \eqref{8.2} can be proven
using the Stein-Tomas restriction theorem \cite{TomasRestriction}.  In \cite{Nicola}, Nicola gives a slightly different proof of this fact.

\section{Analogous results for Schr\"odinger operators in  $\Rn$}\label{Rn}

In this section we shall see that the results that we have obtained for compact manifolds easily extend to the same sort of  results for Schr\"odinger operators in $\Rn$.
In what follows, we shall say that $V\in {\mathcal K}(\Rn)$ if \eqref{a.10} is valid where $B_r(x)$ denotes
the Euclidean ball of radius $r>0$ centered at $x\in \Rn$.  As before, we shall assume that our potentials
are real valued.  Also, we shall let $\Delta$ denote the standard Laplacian on $\Rn$.
If $V\in {\mathcal K}(\Rn)$ it then follows exactly as before that the quadratic form associated with $-\Delta+V(x)$ is defines a unique self-adjoint operator $H_V$ which is
bounded from below.

We can easily modify our arguments for the manifold case to obtain the following analog
of Theorem~\ref{theorem2}:

\begin{theorem}\label{rnthm}  Assume that $V\in
\bigl(L^{\frac{n}2}(\Rn)+L^\infty(\Rn)\bigr)\cap {\mathcal K}(\Rn)$.  Then
if $n=2$ or $n=3$ we have for $\sigma(p)$ as in \eqref{a.3} and $\la\ge 1$ we have
\begin{multline}\label{r.1}
\|u\|_{L^p(\Rn)}\le C_V \la^{\sigma(p)-1}\bigl\| \, \bigl(-\Delta+V-(\la+i)^2\bigr)u\, \bigr\|_{L^2(\Rn)},
\\
\text{if } \, \, 2<p\le \infty \quad \text{and } \, \, u\in \Dom(H_V).
\end{multline}
If $n\ge 4$ this inequality holds for all $2<p<\tfrac{2n}{n-4}$, and we also have for such $n$
\begin{multline}\label{r.2}
\|u\|_{L^p(\Rn)}\le C_V \Bigl( \la^{\sigma(p)-1}\bigl\|  \bigl(-\Delta+V-(\la+i)^2\bigr)u \bigr\|_{L^2(\Rn)}
\\
+\la^{-N+n/2}\bigl\|(I+H_V)^{N/2}R_\la u\bigr\|_{L^2(\Rn)}\Bigr), \\ \text{if } \, \,
p\in [\tfrac{2n}{n-4}, \infty], \, \,  \text{and } \, \, u\in \Dom(H_V), \quad \la\ge1,
\end{multline}
assuming that $N>n/2$ with $R_\la$ being the projection operator for $H_V$ corresponding
to the interval $[2\la^2,\infty)$.
\end{theorem}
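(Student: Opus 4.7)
My plan is to mirror the manifold proofs of \S\ref{3dsection}--\S\ref{heatsection} essentially verbatim, replacing the Hadamard parametrix by the explicit free resolvent of $-\Delta-(\la+i)^2$ on $\Rn$. Because $(\la+i)^2$ has nonzero imaginary part, the kernel $G_\la(x-y)$ of $(-\Delta-(\la+i)^2)^{-1}$ is given by the same Bessel-function formula \eqref{h.5} with $z=-(\la+i)^2$; in particular it satisfies the pointwise bounds of Lemma~\ref{lemmah1} near the diagonal and decays exponentially for $|x-y|$ large (uniformly in $\la\ge 1$). Writing $G_\la=T_\la+R_\la$, where $T_\la$ is cut off to $|x-y|\le \delta$ for a small constant $\delta$ and $R_\la$ is the remainder, $T_\la$ satisfies pointwise and $L^2\to L^p$ bounds identical to those in Proposition~\ref{proph1} and Proposition~\ref{proph2}: the Carleson--Sj\"olin/Stein oscillatory integral estimates for the region $\{|x-y|\ge \la^{-1}\}$ go through, the localized uniform Sobolev bound \eqref{h.16} is now supplied directly by the Kenig--Ruiz--Sogge estimate \eqref{a.7} since the latter is already an $\Rn$-statement, and the far piece $R_\la$ satisfies $\|R_\la\|_{L^2\to L^p}\lesssim \la^{\sigma(p)}$ via Young's inequality together with the exponential decay.

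With these tools I would split $V=V_1+V_2$ with $V_1\in L^{n/2}(\Rn)$ and $V_2\in L^\infty(\Rn)$, then apply the identity
\[
u = T_\la\bigl((-\Delta+V-(\la+i)^2)u\bigr) + R_\la u - T_\la(V_1 u) - T_\la(V_2 u).
\]
For the $V_2$-contribution, $\|T_\la(V_2 u)\|_p\lesssim \|V_2\|_\infty\,\|T_\la\|_{L^2\to L^p}\|u\|_2$, which is of lower order and can be absorbed by replacing $V$ by $V+N$ for $N$ large and running a Duhamel-type device as in the remarks at the end of \S\ref{strichartzsection}. For the $V_1$-contribution I cover $\Rn$ by balls of radius $\delta$ with bounded overlap; because $V_1\in L^{n/2}(\Rn)$ is globally $n/2$-integrable, $\sup_j\|V_1\|_{L^{n/2}(B_j^*)}$ can be made as small as desired by shrinking $\delta$, and the local covering argument from \S3 then yields the absorption $\|u\|_p^p\le C_\delta(\text{data})^p+\tfrac12\|u\|_p^p$. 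The cases $n=2,3$ follow the pattern of \S\ref{3dsection}--\S\ref{2dsection}: the Kato hypothesis gives $\|T_\la(Vu)\|_\infty\le \tfrac12\|u\|_\infty$, and for $n=2$ Minkowski's inequality applied to the $L^{p_c}$-norm of the kernel of $T_\la$ recovers the critical exponent.

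For $n\ge 4$ the heat-kernel argument of \S\ref{heatsection} carries over once one verifies the bounds \eqref{5.3'} for $e^{-tH_V}$ on $\Rn$. These were established in the Kato-class setting precisely in Aizenman--Simon \cite{AZ}, so the Bernstein-type bound \eqref{5.5} and the spectral-projection estimate \eqref{5.1'} are obtained without change. The only modification is that ${\bf 1}_{P_V\le 2\la}$ and ${\bf 1}_{P_V>2\la}$ are now abstract spectral projectors rather than sums of eigenspace projectors, but \S\ref{heatsection} only uses them through the spectral theorem, so no step is affected.

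The main obstacle will be justifying the final absorption, since in $\Rn$ the bound $\|u\|_p^p\le C+\tfrac12\|u\|_p^p$ requires a priori finiteness of $\|u\|_p$. I would resolve this by first proving the estimate for Schwartz $u$ (for which all $L^p$ norms are obviously finite) and then extending to $u\in\Dom(H_V)$ by approximation, truncating $u$ by a compactly supported cutoff and passing to the limit, with the Kato hypothesis used to control commutators with $-\Delta+V$. A companion issue is that $H_V$ no longer has compact resolvent, but since every use of spectral theory in \S\ref{heatsection} goes through abstract projectors and heat-kernel bounds, this introduces no genuine difficulty.
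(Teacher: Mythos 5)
Your overall strategy---the free resolvent of $-\Delta-(\la+i)^2$ in place of the Hadamard parametrix, a covering of $\Rn$ by $\delta$-cubes with finite overlap together with the uniform Sobolev bound \eqref{a.7} and the smallness of $\sup_j\|V\|_{L^{n/2}(Q_j^*)}$ to absorb $T_\la(Vu)$, the Aizenman--Simon heat kernel bounds to run the \S\ref{heatsection} argument for large $p$ when $n\ge4$, and proving the estimate first for nice $u$ so the absorption is legitimate---is exactly the paper's. Two places where the paper is simpler: it does not split $V=V_1+V_2$, since the $L^\infty$ part automatically has small $L^{n/2}$ norm on $\delta$-balls (so \eqref{r.7} holds for all of $V$ and no Duhamel device is needed), and the $\la^{\sigma(p)}\|u\|_2$ term that survives is absorbed into $\la^{\sigma(p)-1}\|(H_V-(\la+i)^2)u\|_2$ via the spectral theorem, since $\|(H_V-(\la+i)^2)u\|_2\ge 2\la\|u\|_2$.

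The genuine gap is your treatment of the remainder. You split the kernel $G_\la=T_\la+R_\la$ and assert $\|R_\la\|_{L^2\to L^p}\lesssim\la^{\sigma(p)}$ ``via Young's inequality together with the exponential decay.'' There are two problems. First, with a kernel splitting the identity reads $u=T_\la(Pu)+R_\la(Pu)$, $P=-\Delta-(\la+i)^2$, so the bound you actually need is $\|R_\la\|_{L^2\to L^p}\lesssim\la^{\sigma(p)-1}$; the exponent $\la^{\sigma(p)}$ is the one relevant to the paper's remainder $R_\la=I-T_\la\circ P$, which acts on $u$ itself. Second, and more seriously, neither bound follows from Young's inequality: for $|x-y|\ge\delta$ the resolvent kernel has size $\approx\la^{\frac{n-3}2}|x-y|^{-\frac{n-1}2}e^{-|x-y|}$, so Young only yields $\la^{\frac{n-3}2}$, and $\tfrac{n-3}2>\sigma(p_c)=\tfrac{n-1}{2(n+1)}$ for $n\ge4$ (let alone $\sigma(p_c)-1<0$). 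The gain down to $\la^{\sigma(p)}$ or $\la^{\sigma(p)-1}$ comes entirely from the oscillation $e^{i\la|x-y|}$ and is a restriction-type estimate, not a size estimate. The paper sidesteps this by taking $R_\la$ to be the commutator term $-[\eta_\delta(\cdot,y),\Delta]T_\la$, supported in the compact annulus $\{|x-y|\in[\delta/2,\delta]\}$, to which Stein's oscillatory integral theorem (Carleson--Sj\"olin) applies exactly as in Proposition~\ref{proph2}, giving \eqref{h.15}; alternatively one can bound the full free resolvent $L^2\to L^{p_c}$ by $\la^{\sigma(p_c)-1}$ via the Stein--Tomas argument of Proposition~\ref{st} with $\e=1$. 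Either way, a genuine oscillatory/restriction ingredient is required at the step where you invoke only decay, and as written that step fails for $p$ near $p_c$.
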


Our assumption that
$V\in L^{\frac{n}2}(\Rn)+L^\infty(\Rn)$ means that we can split up $V$ as
$V=V_0+V_1$ where $V_0\in L^{\frac{n}2}(\Rn)$ and $V_1\in L^\infty(\Rn)$.
Before sketching the proof, let us state a couple of corollaries.

As before, if $\chi^V_\la$ is the spectral projection operator for $H_V$ associated with the intervals
$[\la^2,(\la+1)^2]$, then as an immediate corollary of this result we have the following bounds
\begin{equation}\label{r.3}
\|\chi_\la^V f\|_{L^p\Rn)} \le C_V (1+\la)^{\sigma(p)}\|f\|_{L^p(M)}, \quad p\ge 2, \quad \la \ge 0.
\end{equation}
By routine Sobolev estimates for $H_V$, a consequence of Gaussian upper bounds on the heat kernel as in \eqref{s.2} (see e.g. \cite[Theorem B.2.1]{SimonSurvey}), since $H_V$ is bounded from below we also have
\begin{equation}\label{r.4}
\|\chi_{(-\infty,0)}^Vf\|_{L^p(\Rn)}\le C_V\|f\|_{L^2(\Rn)},
\end{equation}
if $\chi_{(-\infty,0)}$ denotes the spectral projection onto the interval $(-\infty,0)$ for $H_V$.

Using \eqref{r.3}--\eqref{r.4} it is straightforward to adapt the proof of Theorem~\ref{strthm} to obtain the following
local Strichartz estimates for $H_V$:
\begin{equation}\label{r.5}
\|u\|_{L^{\frac{2(n+1)}{n-1}}([0,1]\times \Rn)}\le C_V
\bigl(\|(H_V-i)^{1/4}f_0\|_{L^2(\Rn)}+
\|(H_V-i)^{-1/4}f_1\|_{L^2(\Rn)},
\end{equation}
if, $u$ solves the wave equation
\begin{equation}\label{r.6}
(\partial_t^2-\Delta+V(x))u=0, \quad \partial^j_tu|_{t=0}=f_j, \, \, j=0,1.
\end{equation}
We have formulated \eqref{r.5} a bit differently from \eqref{8.2} since we are not assuming here
that $H_V$ is positive.

\medskip

The proof of Theorem~\ref{rnthm} follows from straightforward modifications of the arguments that
we used earlier for the case of compact manifolds.  Let us sketch how one can obtain \eqref{r.1}
when $p=p_c$, $u\in C^\infty_0(\Rn)$ and $n\ge 4$ and leave it up to the reader to verify that the other
cases follow from our earlier arguments.  Note that, as we mentioned before the heat kernel
bounds in \eqref{s.2}, which are due to Aizenman and Simon~\cite{AZ}, are valid here since
we are assuming that $V\in {\mathcal K}(\Rn)$.  Based on this one easily obtains the bounds
for the other exponents $p>2$ when $n\ge4$, and the  arguments that we used to prove
the results for $n=2,3$ in the case of compact manifolds are also straightforward to
adapt to the Euclidean setting.

To prove \eqref{r.1} for $u\in C^\infty_0(\Rn)$, $p=p_c$ and $n\ge 4$, we let
$$T_\la(x,y)=\eta_\delta(x,y)\times (2\pi)^{-n}
\int_{\Rn}\frac{e^{i(x-y)\cdot\xi}}{|\xi|^2-(\la+i)^2} \, d\xi,$$
with, as before $\eta_\delta(x,y)=\eta(|x-y|/\delta)$, where we are fixing
$\eta\in C^\infty_0(\R)$ which equals one on $[-1/2,1/2]$ and is supported
in $(-1,1)$.  We then have the following analog of \eqref{h.2'},
$$(-\Delta-(\la+i)^2)T_\la(x,y)=\delta_y(x)+[\eta_\delta(\, \cdot \, ,y),\Delta] \, T_\la(x,y).$$
Thus, if $-R_\la(x,y)$ equals the last term in the right, we have
$$I=T_\la\circ (-\Delta-(\la+i)^2) +R_\la,$$
if $T_\la$ and $R_\la$ are the integral operators with kernels $T_\la(x,y)$ and
$R_\la(x,y)$, respectively.

Note that these kernels both vanish when $|x-y|>\delta$.  They also are as in \eqref{h.10}--\eqref{h.13} if we
replace $d_g(x,y)$ there by $|x-y|$.  Similarly, we have the analogs of \eqref{h.14}--\eqref{h.16} in our
setting.  Also, since $V\in L^{\frac{n}2}(\Rn)+L^\infty(\Rn)$, we have that
\begin{equation}\label{r.7}
\sup_x \|V\|_{L^{\frac{n}2}(B_\delta(x))}<\e(\delta),
\end{equation}
where $\e(\delta)$ can be made as small as we like by choosing $\delta>0$
small (depending on $V$).

Let us now see how we can use these facts to prove our inequality.  Just like before, we
have $u=T_\la((-\Delta-(\la+i)^2)u)+R_\la u$.  Consequently,
$$u=T_\la\bigl((-\Delta+V-(\la+i)^2)u\bigr)+R_\la u-T_\la(Vu).$$
Since we have the bounds in Proposition~\ref{proph2} and the kernels vanish when $|x-y|>\delta$,
it follows that if $\{Q_j\}$ is a lattice of nonoverlapping cubes in $\Rn$ of sidelength $\delta$, then
\begin{align*}\|T_\la f\|_{L^{p_c}(Q_j)}
&\le C_0 \la^{\sigma(p_c)-1} \|f\|_{L^2(Q_j^*)},
\\
\|T_\la f\|_{L^{p_c}(Q_j)}
&\le C_0  \|f\|_{L^r(Q_j^*)}, \quad \text{if } \, \tfrac1r=\tfrac2n+\tfrac1{p_c},
\end{align*}
and
$$\|R_\la f\|_{L^{p_c}(Q_j)} \le C_\delta \la^{\sigma(p_c)} \|f\|_{L^2(Q^*_j)},$$
where $Q_j^*$ is the cube with the same center as $Q_j$ but four times the side-length.

As a result,
\begin{multline}\label{r.8}
\|u\|_{L^{p_c}(Q_j)}\le C_0 \la^{\sigma(p_c)-1}\|(-\Delta+V-(\la+i)^2)u\|_{L^2(Q^*_j)}
+C_\delta \la^{\sigma(p_c)} \|u\|_{L^2(Q^*_j)}
\\
+C_0\|Vu\|_{L^r(Q^*_j)}.
\end{multline}
 By H\"older's inequality
and \eqref{r.7}
\begin{equation}\label{r.9}
\|Vu\|_{L^r(Q^*_j)}\le \widetilde \e(\delta)\|u\|_{L^{p_c}(Q^*_j)},
\end{equation}
where $\widetilde \e(\delta)$ can be made as small as we like.  Thus, since $\Rn=\bigcup Q_j$ and
the $\{Q^*_j\}$ have finite overlap, if we raise both sides of \eqref{r.8} to the $p_c$-power and
sum both sides of the reulting inequality over $j$, we obtain, similar to before,
$$\|u\|_{L^{p_c}(\Rn)}
\le C\la^{\sigma(p_c)-1}\bigl(
\|(-\Delta+V-(\la+i)^2)u\|_{L^2(\Rn)}+\la\|u\|_{L^2(\Rn)}\bigr) +\tfrac12 \|u\|_{L^{p_c}(\Rn)},
$$
assuming that $\widetilde \e(\delta)$ in \eqref{r.9} is small enough.  This of course yields
\eqref{r.1} as claimed for our $u\in C^\infty_0(\Rn)$.


\subsection{Global results for small potentials}

Let us conclude by showing that we can greatly improve \eqref{r.3} and obtain {\em global} Strichartz
estimates if we assume that $V\in L^{\frac{n}2}(\Rn)$ has small norm and $n\ge 3$.

Before doing this, let us review how we can adapt the arguments from \S \ref{katosection} to see
that, in this case, $-\Delta+V$ is (essentially) self-adjoint and positive.

To see this, we first notice that, by H\"older's inequality,
$$|\langle Vu,u\rangle|\le \|V\|_{L^{\frac{n}2}(\Rn)}\|u^2\|_{L^{\frac{n}{n-2}}(\Rn)}
=\|V\|_{L^{\frac{n}2}(\Rn)} \, \|u\|_{L^{\frac{2n}{n-2}}(\Rn)}^2.$$
By Sobolev's theorem
$$\|u\|_{L^{\frac{2n}{n-2}}(\Rn)}\le C_n\|u\|_{\dot H^1(\Rn)}
=C_n\|\, \sqrt{-\Delta}\, u\|_{L^2(\Rn)}.$$
Thus, if
$a=\|V\|_{L^{\frac{n}2}(\Rn)}C^2_n<1$, i.e.,
\begin{equation}\label{r.10}\|V\|_{L^{\frac{n}2}(\Rn)}<1/C_n^2,
\end{equation}
we have
$$\bigl| \, \langle Vu, u\rangle \, \bigr| \le a\, \langle -\Delta u,u\rangle$$
with $a<1$.    As a result, by the KLMN theorem (see Reed-Simon~\cite[Theorem X.17]{ReedSimon}), the quadratic
form associated with
$-\Delta+V$  defines a unique positive self-adjoint operator $H_V$.

In what follows we shall assume that \eqref{r.10} is valid.  If we make a further assumption
based on the constants in the uniform Sobolev inequalities of Kenig, Ruiz and the third
author~\cite{KRS} we can obtain the following generalization of the Stein-Tomas restriction
theorem~\cite{TomasRestriction}.

\begin{theorem}\label{small}  If  $n\ge4$ there is a $0<\delta_n<1/C_n^2$, where
$C_n$ is as in \eqref{r.10} so that if
  $\|V\|_{L^{\frac{n}2}(\Rn)}<\delta_n$ there is a uniform
constant $C_0=C_0(n)$ so that
\begin{multline}\label{r.11}
\|u\|_{L^{p_c}(\Rn)}\le C_0
\la^{-1+1/p_c}\e^{-1/2}\bigl\|(-\Delta+V-(\la+i\e)^2)u\bigr\|_{L^2(\Rn)},
\\
\text{if } \, \, \e \in (0,\la/2) \, , \,
\text{and } \, u\in \Dom(H_V).
\end{multline}
If $n=3$ this result also holds if, in addition to the assumption that
$\|V\|_{L^{3/2}({\mathbb R}^3)}<\delta_3$, with $\delta_3$ small enough,
we assume that $V\in {\mathcal K}({\mathbb R}^3)$.
\end{theorem}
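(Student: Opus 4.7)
The plan is to prove \eqref{r.11} by a perturbative argument that combines two estimates for the free resolvent $(-\Delta - z)^{-1}$ at $z = (\la+i\e)^2$. The first ingredient is the dispersive bound
\[
\bigl\|(-\Delta - (\la+i\e)^2)^{-1}\bigr\|_{L^2(\R^n) \to L^{p_c}(\R^n)} \le C\,\la^{1/p_c-1}\,\e^{-1/2}, \qquad 0 < \e \le \la/2,
\]
which I would derive from the Stein--Tomas restriction theorem via a $TT^*$ argument. Since the adjoint of $(-\Delta - z)^{-1}$ on $L^2$ is $(-\Delta - \bar z)^{-1}$, the composition has Fourier symbol
\[
\frac{1}{(|\xi|^2 - z)(|\xi|^2 - \bar z)} \;=\; \frac{1}{\mathrm{Im}\,z}\,\mathrm{Im}\,\frac{1}{|\xi|^2 - z} \;=\; \frac{1}{2\la\e}\,\mathrm{Im}\,\frac{1}{|\xi|^2 - (\la+i\e)^2}.
\]
The multiplier $\mathrm{Im}(|\xi|^2-(\la+i\e)^2)^{-1}$ is an $\e$-mollification of $\tfrac{\pi}{2\la}\,\delta(|\xi|-\la)$, so the standard scaling of Stein--Tomas, $\|f\mapsto f*\widehat{d\sigma_\la}\|_{L^{p_c'}\to L^{p_c}}\lesssim \la^{2/p_c}$, yields $\|\mathrm{Im}(-\Delta-z)^{-1}\|_{L^{p_c'}\to L^{p_c}}\lesssim \la^{2/p_c-1}$ uniformly for $0<\e\le \la/2$. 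Taking square roots in the resulting bound $\|TT^*\|_{L^{p_c'}\to L^{p_c}}\lesssim \la^{2/p_c-2}\e^{-1}$ gives the claim.

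The second ingredient is the Kenig--Ruiz--Sogge uniform Sobolev inequality \eqref{a.7}: since $p_c=\tfrac{2(n+1)}{n-1}$ lies in $\bigl(\tfrac{2n}{n-1},\tfrac{2n}{n-3}\bigr)$ for every $n\ge 3$, there is a constant $C_n^{\mathrm{KRS}}$ so that
\[
\bigl\|(-\Delta - z)^{-1}\bigr\|_{L^r(\R^n) \to L^{p_c}(\R^n)} \le C_n^{\mathrm{KRS}}, \qquad \tfrac{1}{r} = \tfrac{2}{n} + \tfrac{1}{p_c},\qquad z\in\CC,
\]
and Hölder's inequality gives $\|Vu\|_{L^r} \le \|V\|_{L^{n/2}}\|u\|_{L^{p_c}}$. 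For $u\in\Dom(H_V)$ and $f := (-\Delta+V-z)u \in L^2$, the resolvent identity $u=(-\Delta-z)^{-1}(f-Vu)$ together with these two estimates gives
\[
\|u\|_{L^{p_c}} \;\le\; C\,\la^{1/p_c-1}\e^{-1/2}\,\|f\|_{L^2} \;+\; C_n^{\mathrm{KRS}}\,\|V\|_{L^{n/2}}\,\|u\|_{L^{p_c}}.
\]
Choosing $\delta_n<\min\bigl(1/C_n^{\mathrm{KRS}},\,1/C_n^2\bigr)$---the second bound inherited from \eqref{r.10} to guarantee the KLMN hypotheses---allows the last term to be absorbed into the left-hand side, producing \eqref{r.11} with $C_0=C/(1-C_n^{\mathrm{KRS}}\delta_n)$.

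The main obstacle is the a priori finiteness $\|u\|_{L^{p_c}(\R^n)}<\infty$, without which the absorption step is illegitimate. For $n\ge 5$ this follows from the chain $\Dom(H_V)\hookrightarrow H^2(\R^n)\hookrightarrow L^{p_c}(\R^n)$: the first inclusion uses smallness of $\|V\|_{L^{n/2}}$ to identify $\Dom(H_V)$ with $\Dom(-\Delta)$ via KLMN, and the second is Sobolev embedding, valid because $p_c < \tfrac{2n}{n-4}$. The case $n=4$ is analogous, using $H^2(\R^4)\hookrightarrow L^p$ for every finite $p$. The case $n=3$ is more delicate since $L^{3/2}(\R^3)$ is not contained in $L^2_{\mathrm{loc}}$, so even the identification $\Dom(H_V)\subset H^2(\R^3)$ (and hence $\Dom(H_V)\hookrightarrow L^\infty\subset L^{p_c}_{\mathrm{loc}}$) is not automatic from form-smallness of $V$. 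It is precisely to supply this identification and the attendant $L^p$-regularity of $\Dom(H_V)$, via Gaussian heat-kernel bounds as reviewed in \S\ref{heatsection}, that the additional hypothesis $V\in\K(\R^3)$ is imposed.
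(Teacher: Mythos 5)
Your main line --- the free-resolvent bound $\|(-\Delta-(\la+i\e)^2)^{-1}\|_{L^2\to L^{p_c}}\lesssim\la^{1/p_c-1}\e^{-1/2}$ from Stein--Tomas, the Kenig--Ruiz--Sogge uniform Sobolev inequality \eqref{a.7} applied to $Vu$, H\"older, and absorption for $\|V\|_{L^{n/2}}$ small --- is exactly the paper's proof. Your $TT^*$ derivation via $\Im(|\xi|^2-z)^{-1}$ is a repackaging of Proposition~\ref{st}, whose proof foliates frequency space into spheres and integrates $r^{2/p_c}\,|r^2-(\la+i\e)^2|^{-2}$; both reductions rest on the same scaled restriction estimate \eqref{r.13}, and your version would need the same dyadic bookkeeping near $|\xi|=\la$ to be made rigorous.

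The genuine defect is in how you extend the estimate to all of $\Dom(H_V)$. You claim that smallness of $\|V\|_{L^{n/2}}$ identifies $\Dom(H_V)$ with $\Dom(-\Delta)=H^2(\Rn)$ ``via KLMN.'' KLMN identifies \emph{form} domains, not operator domains: it gives $Q(H_V)=Q(-\Delta)=H^1(\Rn)$ and nothing more about $\Dom(H_V)$. For $n\ge5$ the conclusion can be rescued by Kato--Rellich, since $\|Vu\|_2\le\|V\|_{L^{n/2}}\|u\|_{L^{2n/(n-4)}}\lesssim\|V\|_{L^{n/2}}\|u\|_{H^2}$; but for $n=4$ the relevant embedding fails ($H^2(\R^4)\not\hookrightarrow L^\infty$, so $V\in L^2(\R^4)$ is not $\Delta$-operator-bounded) and your ``analogous'' claim does not close the argument, while for $n=3$ the inclusion $\Dom(H_V)\subset H^2$ need not hold at all for Kato-class $V$. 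The fact you actually need --- a priori finiteness of $\|u\|_{L^{p_c}}$ --- is available much more cheaply for every $n\ge3$: $\Dom(H_V)\subset Q(H_V)=H^1(\Rn)\hookrightarrow L^{p_c}(\Rn)$ because $p_c\le\tfrac{2n}{n-2}$. Better still, the paper's route for $n\ge4$ avoids the issue entirely by proving \eqref{r.11} for $u\in C^\infty_0(\Rn)$ and using that $C^\infty_0$ is an operator core for $H_V$; this also disposes of the (unaddressed) need to justify the identity $u=(-\Delta-z)^{-1}\bigl((H_V-z)u-Vu\bigr)$ for rough $u$. Relatedly, your account of $n=3$ misstates the role of the Kato hypothesis: it is used not to place $\Dom(H_V)$ inside $H^2$ but to obtain $\Dom(H_V)\subset L^\infty(\R^3)$ from heat-kernel bounds, which legitimizes the Fubini step in the duality argument that replaces the direct resolvent identity (note $C^\infty_0$ need not even lie in $\Dom(H_V)$ when $V\notin L^2_{\mathrm{loc}}$).
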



The reader can check that when $V\equiv0$ \eqref{r.11} is equivalent to the Stein-Tomas restriction
theorem for $\Rn$.  Half of this claim will be used to prove the following special case of
\eqref{r.11} which will be needed for its proof.  Specifically, we shall require the following:

\begin{prop}\label{st}
Fix $n\ge2$.  Then there is a uniform constant $C$ so that if $0<\e<\la/2$
\begin{equation}\label{r.12}
\|u\|_{L^{p_c}(\Rn)}\le C_0
\la^{-1+1/p_c}\e^{-1/2}\bigl\|(-\Delta-(\la+i\e)^2)u\bigr\|_{L^2(\Rn)},
\quad u\in C^\infty_0(\Rn).
\end{equation}
\end{prop}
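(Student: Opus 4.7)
The plan is to deduce Proposition~\ref{st} directly from the Stein--Tomas restriction theorem for the sphere. Writing $R_0(z) = (-\Delta - z)^{-1}$ and $z = (\la+i\e)^2$, and setting $f = (-\Delta - z)u$, the desired estimate is
\[
\|R_0(z) f\|_{L^{p_c}(\Rn)} \lesssim \la^{-1+1/p_c}\,\e^{-1/2}\,\|f\|_{L^2(\Rn)}.
\]
Since $R_0(z)^* = R_0(\bar z)$, by duality this is equivalent to
\[
\|R_0(\bar z) g\|_{L^2(\Rn)} \lesssim \la^{-1+1/p_c}\,\e^{-1/2}\,\|g\|_{L^{p_c'}(\Rn)},
\]
and I would work with this dual form since Plancherel lets one compute the left side explicitly on the Fourier side.

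Applying Plancherel and passing to polar coordinates $\xi = t\omega$, $t>0$, $\omega \in S^{n-1}$, one has
\[
\|R_0(\bar z) g\|_{L^2}^2 = \int_0^\infty \frac{t^{n-1}}{|t^2-\bar z|^2}\Bigl(\int_{S^{n-1}} |\hat g(t\omega)|^2\,d\sigma(\omega)\Bigr)\,dt.
\]
The sphere integral should be controlled by applying the Stein--Tomas restriction theorem to the rescaled function $\tilde g(x) = t^{-n} g(x/t)$, whose Fourier transform is $\hat g(t\,\cdot)$; since $\|\tilde g\|_{L^{p_c'}(\Rn)} = t^{-n/p_c}\|g\|_{L^{p_c'}(\Rn)}$, this yields
\[
\Bigl(\int_{S^{n-1}} |\hat g(t\omega)|^2\,d\sigma(\omega)\Bigr)^{1/2} \lesssim t^{-n/p_c}\,\|g\|_{L^{p_c'}(\Rn)}.
\]
Inserting this bound and using the algebraic identity $n-1 - 2n/p_c = 2/p_c$, the matter reduces to estimating the scalar integral
\[
I := \int_0^\infty \frac{t^{2/p_c}}{|t^2 - \bar z|^2}\,dt \lesssim \la^{2/p_c - 2}\,\e^{-1}.
\]

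For the final step I would split $I$ over the regions $[0,\la/2]$, $[\la/2,3\la/2]$, and $[3\la/2,\infty)$. On the two outer intervals $|t^2 - \bar z| \gtrsim \max(\la^2, t^2)$, so these contribute only $O(\la^{2/p_c - 3})$. On the central interval, writing $s = t-\la$ and noting that $|t^2 - \bar z|^2 \approx 4\la^2(s^2 + \e^2)$, the contribution is comparable to
\[
\la^{2/p_c - 2}\int_{-\la}^{\la}\frac{ds}{s^2+\e^2} \approx \la^{2/p_c-2}\,\e^{-1},
\]
which dominates the outer pieces since $\e < \la/2$, yielding the claimed bound on $I$. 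The only delicate point in the argument is verifying the scaling exponent in the rescaled Stein--Tomas estimate so that the power of $t$ in $I$ comes out to exactly $2/p_c$; once that is in place the radial integral is elementary, and the proof makes no use of any properties of the potential $V$, which is natural as this is the unperturbed resolvent bound and corresponds to the case $V\equiv 0$ of \eqref{r.11}.
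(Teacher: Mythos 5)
Your proposal is correct and follows essentially the same route as the paper: dualize to an $L^{p_c'}\to L^2$ bound for the free resolvent, apply Plancherel in polar coordinates, control the sphere integrals by the rescaled Stein--Tomas theorem (your scaling exponent $t^{-n/p_c}$ and the identity $n-1-2n/p_c=2/p_c$ both check out against \eqref{r.13}), and reduce to the radial integral $\int_0^\infty t^{2/p_c}|t^2-\bar z|^{-2}\,dt$. The only cosmetic difference is that you evaluate that integral by splitting into three regions, whereas the paper rescales $r\mapsto\la r$ first; both give $\approx\la^{-2+2/p_c}\e^{-1}$.
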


To prove \eqref{r.12}, we note that, by duality, the inequality is equivalent to the statement that
\begin{align}\label{r.12'}\tag{9.12$'$}
(2\pi)^{-n}\int_{\Rn}\frac{|\Hat f(\xi)|^2}{\bigl|\, |\xi|^2-(\la+i\e)^2\, \bigr|^2} \, d\xi
&=\|(-\Delta-(\la+i\e)^2)^{-1}f\|_{L^2(\Rn)}^2
\\
&\lesssim \la^{-2+2/p_c}\e^{-1}\|f\|_{L^{p_c'}(\Rn)}^2, \quad
\text{if } \, 0<\e<\la/2. \notag
\end{align}

To prove this we shall use the following result which follows from a change of scale and the
Stein-Tomas~\cite{TomasRestriction}  $L^2$-restriction theorem for the Fourier transform (the $r=1$ case):
\begin{equation}\label{r.13}
\Bigl(\,  \int_{S^{n-1}} |\Hat f(r\omega)|^2 r^{n-1}\, d\omega\, \Bigr)^{1/2}
\le C_0\,  r^{1/p_c} \, \|f\|_{L^{p_c'}(\Rn)}.
\end{equation}

\begin{proof}[Proof of Proposition~\ref{st}]  As we just noted, it suffices to prove \eqref{r.12'}.  By
\eqref{r.13}, the left side of this inequality is majorized by
$$\int_0^\infty \frac{r^{2/p_c}\, dr}{\bigl| \, r^2-(\la+i\e)^2\, \bigr|^2}\ \, \times \, \|f\|^2_{p_c'},$$
which yields \eqref{r.12'} as,
\begin{multline*}
\int_0^\infty \frac{r^{2/p_c}\, dr}{\bigl| \, r^2-(\la+i\e)^2 \, \bigr|^2}
= \la \, \cdot\,  \la^{-4+2/p_c}\int_0^\infty \frac{r^{2/p_c}\, dr}{\bigl| \, r^2-(1+i\e/\la)^2\, \bigr|^2}
\\
\approx \la^{-3+2/p_c}\, \cdot \, (\la/\e) = \la^{-2+2/p_c} \e^{-1},
\end{multline*}
as desired, since we are assuming that $0<\e<\la/2$.
\end{proof}

\begin{proof}[Proof of Theorem~\ref{small}]
Let us first handle the case where $n\ge4$.  Since $C^\infty_0(\Rn)$ then is an operator core for $-\Delta+V$ (cf. \cite[Theorem B.1.6]{SimonSurvey}),
it suffices to prove the inequality for such $u$.  To do so, we write
$$u=\bigl(-\Delta-(\la+i\e)^2\bigr)^{-1} \bigl(-\Delta+V-(\la+i\e)^2)u
-\bigl(-\Delta-(\la+i\e)^2\bigr)^{-1}(Vu)\, =\, I+II.$$
By Proposition~\ref{st}, the $L^{p_c}(\Rn)$ norm of  $I$ is dominated by
the right side of \eqref{r.11}.  By the uniform Sobolev estimate \eqref{a.7} from \cite{KRS},
the $L^{p_c}(\Rn)$ norm of $II$ is dominated
by
$$A_n\|Vu\|_{L^r(\Rn)}, \quad \text{where } \, \, 1/r=2/n+1/p_c,$$
and $A_n$ is a uniform constant.
Hence, by H\"older's inequality,
$$\|II\|_{L^{p_c}(\Rn)}\le A_n\|V\|_{L^{\frac{n}2}(\Rn)}\|u\|_{L^{p_c}(\Rn)}.$$
which, together with the bound for $I$ yields \eqref{r.11} if $\delta_n<(2A_n)^{-1}$.

It is straightforward to see that the arguments in \S \ref{3dsection} can be used
to show that \eqref{r.12} and \eqref{a.7} imply the 3-dimensional result.  One repeats
the duality argument, noting that the step that involves Fubini's theorem is justified due to the
fact that if $K_{\la,\e}$ denotes the kernel of $\bigl(-\Delta-(\la+i\e)^2\bigr)^{-1} $
then
$$\sup_x \int_{\R^3}|K_{\la,\e}(x-y)| \, |V(y)| \, |u(y)|\, dy<\infty,$$
due to the fact that if $u\in \Dom(-\Delta+V)$ then $u\in L^p(\R^3)$ for all $2<p\le \infty$
\footnote{This follows from \cite[Theorem B.2.1]{SimonSurvey}.}
and $|K_{\la,\e}(x)|\lesssim |x-y|^{-1}$.  One uses the assumption that $V\in {\mathcal K}(\R^3)$
to control the integral in the region where $|x-y|\le 1$.  One then controls the remaining part
using H\"older's inequality since $u(\, \cdot\, )K_{\la,\e}(x-\, \cdot \, )\in L^3(|y|>1)$
 and we are also assuming that $V\in L^{3/2}(\Rn)$.
\end{proof}

Next, just as before, we can use the spectral theorem to show that the quasimode estimates \eqref{r.11} yield
related (and indeed equivalent by the arguments in \cite{SoggeZelditchQMNote}) bounds for spectral
projection operators:

\begin{corollary}\label{cor9.4}  Let $n\ge3$ and let $V$ be as in Theorem~\ref{small}.
If $P_V=\sqrt{H_V}$ let $\chi_{[\la,\la+\e)}^V$ denote the associated spectral projection operators
corresponding to the interval $[\la,\la+\e)$.  Then
\begin{equation}\label{r.14}
\|\chi_{[\la,\la+\e)}^V f\|_{L^{p_c}(\Rn)}\le C_V \la^{1/p_c}\e^{1/2} \, \|f\|_{L^2(\Rn)}, \quad
\text{if } \, \, \la \ge 2\e,
\end{equation}
and
\begin{equation}\label{r.15}
\|\chi^V_{[0,2\e)}f\|_{L^{p_c}(\Rn)} \le C_V \e^{\frac12+\frac1{p_c}} \|f\|_{L^2(\Rn)}.
\end{equation}
\end{corollary}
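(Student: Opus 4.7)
The plan is to deduce \eqref{r.14} and \eqref{r.15} from the quasimode estimate \eqref{r.11} by specializing $u$ to be the spectral projection of $f$ onto the relevant interval, and then using the spectral theorem for $P_V=\sqrt{H_V}$ to control the forcing term $\|(H_V-(\la+i\e)^2)u\|_{L^2}$.

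For \eqref{r.14}, set $u=\chi^V_{[\la,\la+\e)}f$. Since $P_V$ is self-adjoint with nonnegative spectrum, the range of a bounded-interval spectral projection lies in $\Dom(H_V)$, and by the functional calculus
\[
\bigl\|(H_V-(\la+i\e)^2)u\bigr\|_{L^2(\Rn)}^2=\int_{[\la,\la+\e)}\bigl|\mu^2-(\la+i\e)^2\bigr|^2\,d\bigl\|E_\mu^{P_V}f\bigr\|^2.
\]
For $\mu\in[\la,\la+\e)$ the factorization $\mu^2-(\la+i\e)^2=(\mu-\la-i\e)(\mu+\la+i\e)$ together with $\e\le\la/2$ yields $|\mu-\la-i\e|\le\sqrt2\,\e$ and $|\mu+\la+i\e|\le 2\la+2\e\le 3\la$, so $|\mu^2-(\la+i\e)^2|\lesssim\la\e$. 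Hence $\|(H_V-(\la+i\e)^2)u\|_{L^2}\lesssim\la\e\|u\|_{L^2}\le\la\e\|f\|_{L^2}$. Plugging this into \eqref{r.11} gives
\[
\|u\|_{L^{p_c}(\Rn)}\lesssim \la^{-1+1/p_c}\e^{-1/2}\cdot \la\e\,\|f\|_{L^2(\Rn)}=\la^{1/p_c}\e^{1/2}\|f\|_{L^2(\Rn)},
\]
which is \eqref{r.14}. (The boundary case $\e=\la/2$ excluded from \eqref{r.11} follows by continuity in $\e$.)

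For \eqref{r.15}, \eqref{r.11} cannot be used directly with $\la=0$. The trick is to apply \eqref{r.11} with the parameter pair $(\la,\e)$ replaced by $(5\e,2\e)$, which satisfies $2\e<5\e/2$, to the function $u=\chi^V_{[0,2\e)}f$. For $\mu\in[0,2\e)$ one has $|\mu^2-(5\e+2i\e)^2|\le\mu^2+|5\e+2i\e|^2\le 4\e^2+29\e^2\lesssim\e^2$, so $\|(H_V-(5\e+2i\e)^2)u\|_{L^2}\lesssim\e^2\|f\|_{L^2}$. Inserting in \eqref{r.11} gives
\[
\|\chi^V_{[0,2\e)}f\|_{L^{p_c}(\Rn)}\lesssim (5\e)^{-1+1/p_c}(2\e)^{-1/2}\cdot \e^2\|f\|_{L^2(\Rn)}\approx \e^{1/2+1/p_c}\|f\|_{L^2(\Rn)},
\]
which is \eqref{r.15}.

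This argument is essentially bookkeeping once the quasimode estimate \eqref{r.11} is in hand; the only real care is in the second part, where the constraint $\e<\la/2$ forces us to shift the comparison parameter $\la$ away from $0$ by an amount comparable to $\e$, so that the spectral factor $\mu^2-(\la+i\e)^2$ remains small (of order $\e^2$) on $[0,2\e)$ while still keeping the prefactor in \eqref{r.11} of the correct size. There is no other substantive obstacle.
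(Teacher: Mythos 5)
Your argument is correct and is essentially the paper's own proof: in both cases one applies the quasimode estimate \eqref{r.11} to $u=\chi^V_I f$ and uses the spectral theorem to bound $\|(H_V-(\la+i\e)^2)u\|_2$ by $O(\la\e)\|f\|_2$ (resp.\ $O(\e^2)\|f\|_2$). The only difference is cosmetic: for \eqref{r.15} the paper simply takes $\la=2\e$ (the boundary of the condition $\e<\la/2$), whereas you shift to $\la=5\e$ to respect the strict inequality — a slightly more careful bookkeeping of the same step.
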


\begin{proof}
To prove \eqref{r.14}, we note that if $\la\ge 2\e$ and $\tau\in [\la,\la+\e)$ then
$$|\tau^2-(\la+i\e)^2|\le C_0\la \e.$$
Consequently, by the spectral theorem
$$\|\chi^V_{[\la,\la+\e)}f\|_{L^2(\Rn)}\le C_0\la \e\|f\|_{L^2(\Rn)}.$$
If we use this and the quasimode estimates \eqref{r.12}, we obtain
\eqref{r.14}.

To prove \eqref{r.15} we take $\la=2\e$ in \eqref{r.12}:
\begin{equation}\label{r.16}\|u\|_{L^{p_c}(\Rn)}\le C_V \e^{-\frac32+\frac1{p_c}}
\|(-\Delta+V-(2\e+i\e)^2)u\|_{L^2(\Rn)}
\end{equation}
Since for $0\le \tau \le 2\e$, $|\tau^2-(2\e+i\e)^2|\approx \e^2$, by the spectral theorem
$$\|(-\Delta+V-(2\e+i\e)^2)\chi^V_{[0,2\e)}f\|_{L^2(\Rn)} \lesssim \e^2 \|f\|_{L^2(\Rn)}.$$
Hence, by \eqref{r.16} we have \eqref{r.15}.
\end{proof}

\medskip
\noindent{\bf Remarks.}  It was noted in Ionescu and Jerison~\cite{IonescuJerison} that if $\|V\|_{L^{n/2}(\Rn)}$ is
small enough then $-\Delta+V$ cannot have eigenvalues.  The argument at the beginning of this section shows
that, under this assumption, the operator can have no negative spectrum, and \eqref{r.14}--\eqref{r.15} imply
that there can be no eigenvalues in $[0, \infty)$.

There has been much work in recent years in trying to obtain bounds of the form \eqref{r.12} or \eqref{r.14} on
compact Riemannian manifolds $(M,g)$
when $\e=\e(\la)$ is a function of $\la$ and where $p_c$ may be replaced by other exponents.  See, e.g.,
 \cite{Berard}, \cite{BSSY},  \cite{HassellTacyNonPos}, \cite{BlairSoggeToponogov} and\cite{BlairSoggelog}.

 It would be interesting to see whether one could replace the smallness condition in Theorem~\ref{small} by ones that
 are analogous to those in \cite{IonescuJerison} or \cite{iRodS}.  The reader can check that if the global Kato norm, as defined in Rodnianski and Schlag~\cite{iRodS}, is smaller than $4\pi$ when $n=3$, then one has the variant of
 \eqref{r.12} corresponding to $p=\infty$ (where $\la^{-1+1/p_c}$ is replaced by $1$ in the right side).
 Rodnianski and Schlag showed that under this smallness assumption one has the natural dispersive
 estimates for $e^{itH_V}$, and they also improved on the related earlier results of Journ\'e, Soffer and the
 third author~\cite{JSS} in terms of assumptions on the potentials $V(x)$ that are needed for such dispersive estimates.  It would be interesting to see whether such hypotheses could lead to bounds of the
 form \eqref{r.12}.

 \medskip

 Let us conclude by presenting another estimate which breaks down if there are embedded eigenvalues: Global
 Strichartz estimates.

 \begin{theorem}\label{rnstr}  Let $n\ge3$ and $V$ be as in Theorem~\ref{small}.  Then, if $P_V=\sqrt{H_V}$,
 \begin{equation}\label{r.17}
 \|u\|_{L^{p_c}(\R\times \Rn)}\le C_V\|P_V^{1/2}f_0\|_{L^2(\Rn)}
 +\|P_V^{-1/2}f_1\|_{L^2(\Rn)},
 \end{equation}
 if $u$ solves the wave equation associated to $H_V$ with initial data $(f_0,f_1)$, i.e.,
 $$(\partial^2_t-\Delta+V)u=0, \quad
 \partial_t^ju|_{t=0}=f_j, \, \, j=0,1.$$
 \end{theorem}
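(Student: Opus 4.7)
The plan is to adapt the argument given for Theorem~\ref{strthm} to the global Euclidean setting, with two modifications necessitated by the continuous spectrum of $P_V$ on $\Rn$. First, the unit-length spectral decomposition from Section~\ref{strichartzsection} will be replaced by a dyadic Littlewood--Paley decomposition. Second, the spectral projection bound used as input will be Corollary~\ref{cor9.4} in place of Corollary~\ref{spec}; the extra factor $\e^{1/2}$ appearing in \eqref{r.14} is exactly the scaling needed to absorb the lack of compactness.

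As in Section~\ref{strichartzsection}, I would first reduce \eqref{r.17} to the half-wave inequality $\|e^{itP_V} f\|_{L^{p_c}(\R\times\Rn)} \lesssim \|P_V^{1/2} f\|_{L^2(\Rn)}$ by decomposing the solution into $e^{\pm itP_V}$ components. Fix $\beta \in C^\infty_0((1/2,2))$ with $\sum_{k\in\ZZ}\beta(2^{-k}\cdot) \equiv 1$ on $(0,\infty)$, set $\beta_k(\lambda) = \beta(2^{-k}\lambda)$, and let $F_k(t,x) = e^{itP_V}\beta_k(P_V)f(x)$. Sobolev embedding in $t$ with $\alpha = \tfrac{1}{2} - \tfrac{1}{p_c}$ gives $\|e^{itP_V} f\|_{L^{p_c}(\R\times\Rn)} \le \| |D_t|^\alpha e^{itP_V} f \|_{L^{p_c}_x L^2_t}$. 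Since the $F_k$ have $t$-Fourier support in the dyadic bands $[2^{k-1},2^{k+1}]$, they are pairwise orthogonal in $L^2_t$ for $|k-\ell| \ge 2$; applying Plancherel in $t$ and Minkowski's integral inequality (valid since $p_c \ge 2$) yields
\[
  \bigl\| |D_t|^\alpha e^{itP_V} f \bigr\|_{L^{p_c}_x L^2_t}^2 \lesssim \sum_{k\in\ZZ} 2^{2k\alpha}\,\|F_k\|_{L^{p_c}_x L^2_t}^2.
\]

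The core step is then to establish the dyadic bound
\[
  \|F_k\|_{L^{p_c}_x L^2_t}^2 \lesssim 2^{2k/p_c}\,\|\beta_k(P_V)f\|_{L^2(\Rn)}^2,
\]
which is a Stein--Tomas-type restriction estimate for the spectral resolution of $P_V$ at dyadic scale $2^k$. I would prove this by $TT^*$: the bound is equivalent to the boundedness of the composition of $h \mapsto e^{itP_V} h$ (acting on $L^2$-functions with spectral support in $[2^{k-1}, 2^{k+1}]$, mapping into $L^{p_c}_x L^2_t$) with its adjoint, and Plancherel in $t$ reduces the composition's kernel to the spectral projector of $P_V$ onto $\tau$. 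Uniform control of this kernel at small scale $\e$ for $\lambda \in [2^{k-1},2^{k+1}]$ is furnished by Corollary~\ref{cor9.4}; summing over a covering of the dyadic band by $\e$-length intervals and letting $\e \to 0$ yields the claimed estimate. Combining the two steps and using $2\alpha + \tfrac{2}{p_c} = 1$ together with the spectral theorem produces
\[
  \|e^{itP_V} f\|_{L^{p_c}(\R\times\Rn)}^2 \lesssim \sum_k 2^k\,\|\beta_k(P_V) f\|_{L^2}^2 \approx \|P_V^{1/2} f\|_{L^2(\Rn)}^2.
\]

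The main obstacle will be making the $TT^*$/restriction step rigorous in the presence of a continuous spectrum, since the spectral projector onto a single $\tau$ is not a bounded operator. Under the smallness hypothesis on $\|V\|_{L^{n/2}}$, the spectrum of $P_V$ is expected to be purely absolutely continuous on $(0,\infty)$ --- consistent with the Ionescu--Jerison and Rodnianski--Schlag-type results discussed in the preceding remarks --- and under this hypothesis the argument goes through cleanly using a spectral density $E'(\tau)$. If one wishes to avoid invoking absolute continuity directly, one can instead work at a fixed small scale $\e > 0$ throughout, using Corollary~\ref{cor9.4} on finitely many $\e$-length spectral intervals tiling each dyadic band, and pass to the limit $\e \to 0$ exploiting the $\e^{1/2}$ gain to ensure the resulting bounds are independent of $\e$.
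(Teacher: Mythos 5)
Your outline is essentially sound and rests on the same two pillars as the paper's argument -- Sobolev embedding in $t$ combined with almost-orthogonality of spectrally localized pieces, and the $\e$-scale projection bounds of Corollary~\ref{cor9.4} -- but it organizes the key step differently. The paper never forms dyadic Littlewood--Paley blocks or a $TT^*$ operator for the spectral measure: it fixes a small $\e>0$, multiplies by $\rho(\e t)$ with $\supp\Hat\rho\subset(-\tfrac12,\tfrac12)$ (so the time--frequency width $\e$ is dual to the time scale $\e^{-1}$), tiles the \emph{entire} spectrum by the intervals $I_k=[(k-1)\e,k\e)$, $k\ge1$, and sums the resulting orthogonal pieces using \eqref{r.14}--\eqref{r.15}; the factor $\e^{1/2}$ exactly cancels the $\e^{-1}$ from the number of intervals per unit frequency, giving a bound by $\|(P_V+\e I)^{1/2}f\|_2$ uniform in $\e$, and the global estimate follows by letting $\e\to0$. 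Your primary route instead aims directly at a global-in-time Stein--Tomas estimate for each dyadic block, which forces you to make sense of the spectral density $dE_\tau/d\tau$ as an $L^{p_c'}\to L^{p_c}$ object; as written, "the spectrum is expected to be purely absolutely continuous" is the one genuine gap. It can be closed: Corollary~\ref{cor9.4} together with duality gives $\langle E(I)g,g\rangle\le\|E(I)g\|_{p_c}\|g\|_{p_c'}\lesssim\la^{1/p_c}|I|^{1/2}\langle E(I)g,g\rangle^{1/2}\|g\|_{p_c'}$, hence $\langle E(I)g,g\rangle\lesssim\la^{2/p_c}|I|\,\|g\|_{p_c'}^2$ for $g\in L^2\cap L^{p_c'}$, so the spectral measure is Lipschitz (in particular absolutely continuous) on a dense set and the density bound you need follows. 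Alternatively, the fallback you describe in your last paragraph -- keeping $\e$ fixed, using finitely many $\e$-intervals, and exploiting uniformity in $\e$ -- is precisely the paper's proof, and is the cleaner of the two since it never touches the spectral density. What your dyadic organization buys is a direct global-in-time statement for each frequency block (and a transparent link to the classical free-case proof of Strichartz via the extension estimate); what the paper's buys is the complete avoidance of any limiting/absolute-continuity issue.
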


 When $n=3$, Bui, Duong and Hong~\cite{Bui} obtained results of this type
 (as well as the stronger dispersive estimates) under an assumption that requires
 a global Kato norm of $V$ to be finite.

 \begin{proof}
 It suffices to see that there is a uniform constant $C_0(V,n)$ so that for $0<\e<1$
 \begin{equation}\label{r.17'}\tag{9.17$'$}
 \|e^{itP_V}f\|_{L^{p_c}([-\e,\e]\times \Rn)}
 \le C_0\|(P_V+\e I)^{1/2}f\|_{L^2(\Rn)}.
 \end{equation}

 To prove \eqref{r.17'}, similar to the proof of Theorem~\ref{strthm}, it suffices to show that
 if we fix $\rho\in {\mathcal S}(\R)$ with $\text{supp }\Hat \rho\subset (-1/2,1/2)$, then we have
 the uniform bounds
 \begin{equation}\label{r.18}
 \|\rho(\e t)\, e^{itP_V}f\|_{L^{p_c}(\R\times \Rn)} \le C_0\|(P_V+\e I)^{1/2}f\|_{L^2(\Rn)}.
 \end{equation}

 In order to verify this, let
 $$I_k=[(k-1)\e, k\e), \quad k=1,2,3,\dots.$$
 Then if $\chi_k$ is the spectral projection operator for $I_k$ associated with $P_V$, it follows
 from Corollary~\ref{cor9.4} that
 \begin{equation}\label{r.19}
 \|\chi_kf\|_{L^{p_c}(\Rn)}\lesssim \e^{1/2} \, (k\e)^{1/p_c} \, \|\chi_kf\|_{L^2(\Rn)},
 \quad k=1,2,3,\dots.
 \end{equation}
 Also,
 $$\sum_{k=1}^\infty \|\chi_kf\|_{L^2(\Rn)}^2=\|f\|_{L^2(\Rn)}^2.$$
 As before, we use $L^2_t\to L^{p_c}_t$ Sobolev estimates to deduce that
 $$\|\rho(\e t)\, e^{itP_V}f\|_{L^{p_c}_{t,x}}\lesssim
 \| \, |D_t|^{1/2-1/p_c} (\rho(\e t)e^{itP_V}f)\, \|_{L^{p_c}_xL^2_t}.
 $$

 Let
 $$F(t,x)=|D_t|^{1/2-1/p_c}\bigl(\rho(\e t)e^{itP_V}f\bigr).$$
 If we take the Fourier transform in $t$, we deduce that
 $$\Hat F(\tau,x)=|\tau|^{1/2-1/p_c} \, \e^{-1} \, \bigl(\Hat \rho(\e^{-1}(\tau-P_V)) f\bigr)(x)
 =\sum_{k=1}^\infty \Hat F_k(\tau,x),$$
 where
 $$\Hat F_k(\tau,x)=|\tau|^{1/2-1/p_c} \, \e^{-1} \, \bigl(\Hat \rho(\e^{-1}(\tau-P_V))\circ\chi_k f\bigr)(x).$$

 Note that since $\text{supp }\Hat \rho\subset (-1/2,1/2)$,
 $\Hat \rho(\e^{-1}(\tau-P_V))\circ\chi_k$ if $\tau\notin [\e(k-10),\e(k+10)]$.  Consequently,
 $$\int_{-\infty}^\infty
 F_k(t,x) \, \overline{F_\ell(t,x)} \, dt
 =(2\pi)^{-1}\int_{-\infty}^\infty \Hat F_k(\tau,x)\, \overline{\Hat F_\ell(\tau,x)} \, d\tau
 =0 \quad \text{if } \, \, |k-\ell|>10.$$
 As a result,
 \begin{align*}\bigl(\int_{-\infty}^\infty |F(t,x)|^2 \, dt\bigr)^{1/2}
 =\bigl(\int_{-\infty}^\infty \bigl| \, \sum_{k=1}^\infty F_k(t,x)\, \bigr|^2 \, dt\bigr)^{1/2}
& \lesssim \bigl(\sum_{k=1}^\infty \int_{-\infty}^\infty |F_k(t,x)|^2 \, dt\bigr)^{1/2}
\\
&\le  \bigl(\sum_{k=1}^\infty \int_{-\infty}^\infty |\Hat F_k(\tau,x)|^2 \, d\tau\bigr)^{1/2}.
 \end{align*}

 By combining this with \eqref{r.19} and the above we deduce that
 \begin{align*}
  \|\rho(\e t)\, e^{itP_V}f\|_{L^{p_c}(\R\times \Rn)}^2&\lesssim \sum_{k=1}^\infty \int_{-\infty}^\infty
  \| \Hat F_k(\tau,\, \cdot \, )\|^2_{L^{p_c}(\Rn)} \,
  d\tau
  \\
  & =\e^{-2} \sum_{k=1}^\infty \int_{(k-10)\e}^{(k+10)\e} |\tau|^{1-2/p_c}
  \|\Hat \rho(\e^{-1}(\tau-P_V))\chi_kf\|_{p_c}^2 \, d\tau
  \\
  &\lesssim \e^{-2}\sum_{k=1}^\infty \e\cdot (k\e)^{1-2/p_c}\|\chi_k(\rho(\e^{-1}(\tau-P_V))f) \,    \|_{p_c}^2
  \\
  &\lesssim \e^{-2}\sum_{k=1}^\infty \e \cdot (k\e)^{1-2/p_c}\, \bigl(\e^{1/2}(\e k)^{1/p_c}\bigr)^2
  \|\chi_kf\|_2^2
  \\
  &=\e^{-2}\sum_{k=1}^\infty \e^2 \, (k\e)\, \|\chi_kf\|_2^2
  \\
  &=
  \sum_{k=1}^\infty \bigl\| \, (k\e)^{1/2}\chi_kf \, \bigr\|_2^2 \approx
  \bigl\| \, (P_V+\e I)^{1/2}f\, \bigr\|_2^2,
 \end{align*}
 as desired.
 \end{proof}


\section*{Acknowledgements}
The authors are grateful to W. Schlag and K.-T. Sturm for helpful suggestions and comments.  The research was also carried out in part while the third author was visiting the Mittag-Leffler Institute, and he wishes to thank the institute for its hospitality and the feedback received from fellow visitors, especially R. Killip.  The research was also carried in part while this author was visiting the University of Edinburgh and the University of Birmingham and he also wishes to thank these institutions for their hospitality.

\bibliography{refs}
\bibliographystyle{amsplain}

\end{document}